\newtheorem*{rep@theorem}{\rep@title}
\newcommand{\newreptheorem}[2]{%
\newenvironment{rep#1}[1]{%
 \def\rep@title{#2 \ref{##1}}%
 \begin{rep@theorem}}%
 {\end{rep@theorem}}}
\newtheorem{theorem}{Theorem}[section]
\newtheorem{lemma}[theorem]{Lemma}
\newtheorem{conjecture}[theorem]{Conjecture}
\newtheorem{claim}[theorem]{Claim}
\newtheorem{case}{Case}
\newtheorem{scl}{Subclaim}
\newtheorem{scase}{Subcase}
\newcommand{\e}{\emph}
\newcommand{\sub}{\subseteq}
\newcommand{\bs}{\backslash}
\newcommand{\D}{\Delta}
\newcommand{\Sg}{\Sigma}
\newcommand{\N}{\mathbb{N}}
\newcommand{\h}{\mathsf{holes}}
\newcommand{\bd}{\mathsf{bd}}
\newcommand{\ex}{\mathsf{ex}}
\newcommand{\clone}{\mathsf{copy}}
\newcommand{\tower}{\mathsf{tower}}
\theoremstyle{definition}
\newtheorem{definition}{Definition}[section]
\begin{document}
\title{Explicit Bounds for Graph Minors}
\author{Jim Geelen}
\author{Tony Huynh}
\author{R. Bruce Richter}
\address[Jim Geelen and R. Bruce Richter]{Department of Combinatorics and Optimization, University of Waterloo, 200 University Avenue West,
Waterloo, ON, N2L 3G1, Canada}
\address[Tony Huynh]{Department of Mathematics,
  Universit\'e Libre de Bruxelles, Avenue Franklin Roosevelt 50, 1050 Brussels,
  Belgium}
\email{jfgeelen@uwaterloo.ca}
\email{tony.bourbaki@gmail.com}
\email{brichter@uwaterloo.ca}
\thanks{This research was partially supported by grants from the Natural Sciences and Engineering Research Council of Canada.  Tony Huynh was also supported by the NWO (The Netherlands Organization for Scientific Research) free
competition project “Matroid Structure – for Efficiency” led by Bert Gerards}

\begin{abstract}
Let $\Sg$ be a surface with boundary $\bd(\Sg)$, $\mathcal{L}$ be a collection of $k$ disjoint $\bd(\Sg)$-paths in $\Sg$, and $P$ be a non-separating $\bd(\Sg)$-path in $\Sg$.  We prove that there is a homeomorphism $\phi: \Sg \to \Sg$ that fixes each point of $\bd(\Sg)$ and such that $\phi(\mathcal{L})$ meets $P$ at most $2k$ times.  

With this theorem, we derive explicit constants in the graph minor algorithms of Robertson and Seymour [Graph minors. XIII. The disjoint paths problem. \e{J. Combin. Theory Ser. B}, 63(1):65–110, 1995]. We reprove a result concerning redundant vertices for graphs on surfaces, but with explicit bounds.  That is, we prove that there exists a \e{computable} integer $t:=t(\Sg,k)$  such that if $v$ is a `$t$-protected' vertex in a surface $\Sg$, then $v$ is redundant with respect to any $k$-linkage.  
\end{abstract}

\keywords{graphs, surfaces, linkages, minors}
\maketitle

\section{Introduction}
In~\cite{gm20}, Robertson and Seymour prove the remarkable theorem that every minor-closed property of graphs is characterized by a finite set of excluded minors. 

\begin{theorem}
For every minor-closed class of graphs $\mathcal{C}$, there exists a finite set of graphs $\ex(\mathcal{C})$, such that a graph is in $\mathcal{C}$ if and only if it does not contain a minor isomorphic to a member of $\ex(\mathcal{C})$.  
\end{theorem}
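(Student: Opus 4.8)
The plan is to deduce the theorem from the well-quasi-ordering (WQO) form of the Graph Minor Theorem, namely that in every infinite sequence $G_1,G_2,\dots$ of finite graphs there exist indices $i<j$ with $G_i$ isomorphic to a minor of $G_j$. First I would record why this suffices. Given a minor-closed class $\mathcal{C}$, let $\ex(\mathcal{C})$ be the set of graphs not in $\mathcal{C}$ all of whose proper minors lie in $\mathcal{C}$; since $\mathcal{C}$ is minor-closed, a graph belongs to $\mathcal{C}$ if and only if it has no minor in $\ex(\mathcal{C})$, so only the finiteness of $\ex(\mathcal{C})$ remains. But $\ex(\mathcal{C})$ is an antichain in the minor quasi-order, and a WQO has no infinite antichain; hence $\ex(\mathcal{C})$ is finite.

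To prove the WQO statement I would argue by contradiction via Nash-Williams's minimal-bad-sequence technique. Call a sequence \emph{bad} if no earlier term is a minor of a later one, and suppose a bad sequence exists. Choose one, $G_1,G_2,\dots$, with $|V(G_1)|+|E(G_1)|$ minimum among all bad sequences, then $|V(G_2)|+|E(G_2)|$ minimum among bad sequences beginning with $G_1$, and so on. Put $H:=G_1$. Since the sequence is bad, no $G_i$ with $i\ge 2$ has $H$ as a minor, so it suffices to show that the class $\mathcal{F}_H$ of $H$-minor-free graphs is WQO; the minimality of the tail is exactly what lets the induction inside $\mathcal{F}_H$ close, through the standard observation that no proper minor of any term can start a bad sequence.

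The structural input is the Graph Minor Structure Theorem: there is a constant $k=k(H)$ such that every graph in $\mathcal{F}_H$ has a tree-decomposition in which each torso is $k$-nearly embeddable in a surface of Euler genus at most $k$ — drawable in such a surface except for at most $k$ vortices of depth at most $k$ glued into faces and at most $k$ apex vertices. One then establishes WQO for this structured family in three layers: (i) graphs carrying boundary data (``societies'') that embed in a fixed surface are WQO — the genuinely topological heart, where a perturbation lemma of the kind proved in this paper, bounding the number of times a $k$-linkage crosses a non-separating curve, is used to control the interaction of linkages with the surface; (ii) attaching a bounded number of bounded-depth vortices and a bounded number of apex vertices preserves WQO, via WQO of labelled sequences; and (iii) the class of graphs admitting a tree-decomposition whose torsos come from a WQO class is again WQO, a graph-minor analogue of Kruskal's tree theorem proved by a second application of the minimal-bad-sequence technique to trees labelled by the (WQO) set of torsos. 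Chaining (i)--(iii) shows $\mathcal{F}_H$ is WQO, contradicting the bad tail $G_2,G_3,\dots$.

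The main obstacle is, of course, the Graph Minor Structure Theorem, whose proof occupies the bulk of the Graph Minors series; among the more self-contained analytic ingredients the hardest is step (i), WQO of graphs in a fixed surface, where the effective control of linkages relative to the topology of the surface — precisely the statement isolated and made quantitative in this paper — is indispensable. I would also flag the bookkeeping in the minimal-bad-sequence setup: the reduction to $\mathcal{F}_H$ and the induction in step (iii) must be interlocked carefully, and this is where subtle errors tend to creep in even though no deep idea is involved.
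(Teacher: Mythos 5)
First, a point of orientation: the paper does not prove this statement at all --- it is quoted as background from Robertson and Seymour's Graph Minors XX, and the present paper's contribution is orthogonal (making certain constants in the \emph{algorithmic} companion results explicit). So there is no in-paper proof to compare against, and what you have written should be judged on its own terms as a proof attempt.

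As such, it has a genuine gap: it is an outline that defers every substantive step to theorems whose proofs collectively \emph{are} the Graph Minors series. The reduction from well-quasi-ordering to finiteness of $\ex(\mathcal{C})$ is fine (and standard), but the WQO statement itself is the whole theorem, and your three ingredients --- the Graph Minor Structure Theorem, WQO of graphs (with societies) embedded in a fixed surface, and the tree-decomposition/Kruskal-type gluing step --- are each unproved here and are far deeper than anything you establish; invoking them is essentially invoking Graph Minors IV--XX wholesale, so the attempt does not constitute a proof. Two further points where the sketch as written is not merely incomplete but inaccurate: (a) the top-level minimal-bad-sequence apparatus is doing no work --- once one knows that, for every $H$, the $H$-minor-free graphs are WQO, any bad sequence $G_1,G_2,\dots$ is immediately impossible because its tail lies in $\mathcal{F}_{G_1}$; the minimality of the chosen sequence is never actually used in the way you claim, and Nash--Williams-style minimality belongs inside the structured WQO arguments, not at this level; (b) step (iii) is false as stated: ``graphs admitting a tree-decomposition whose torsos come from a WQO class are WQO'' needs, at minimum, bounded adhesion and a quasi-order on \emph{boundaried} (rooted) torsos that respects the boundary identification --- without remembering how pieces attach, minor containment of torsos says nothing about minor containment of the glued graphs. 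The structure theorem can be arranged to supply bounded adhesion, so the outline is repairable, but as written the gluing step does not go through.
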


Robertson and Seymour also prove an important algorithmic counterpart to this theorem in~\cite{gm13, gm22}.

\begin{theorem}
For any fixed graph $H$, there exists a polynomial-time algorithm to test if an input graph $G$ contains a minor isomorphic to $H$.
\end{theorem}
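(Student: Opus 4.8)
The plan is to follow the approach of Robertson and Seymour. The algorithm is recursive. Given $G$, first compute an approximate tree-decomposition. If its width is at most a threshold $w=w(H)$ depending only on $H$, then whether $H\le G$ can be decided by a bottom-up dynamic program over the decomposition --- equivalently, by Courcelle's theorem, since the property of containing $H$ as a minor is expressible in monadic second-order logic for fixed $H$ --- in time $f(w,H)\cdot|V(G)|$. If the tree-width exceeds $w(H)$, the crux is to locate an \emph{irrelevant} vertex: a vertex $v$ such that $H\le G$ if and only if $H\le G-v$. We then delete such a vertex and recurse on $G-v$. Each recursive call either terminates or deletes one vertex before recursing, so the recursion has depth at most $|V(G)|$; since the work done outside the recursive subcall is polynomial at every level, the whole algorithm runs in polynomial time.

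To find an irrelevant vertex when the tree-width of $G$ exceeds $w(H)$: by the Grid Minor Theorem (in a quantitatively explicit form) $G$ contains a wall of height $h_1(H)$. By the Flat Wall Theorem, either $G$ has a clique minor on $|V(H)|$ vertices --- in which case $H\le G$ and we answer YES and stop --- or there is a set $A$ of at most $a(H)$ vertices such that $G-A$ contains a \emph{flat} wall $W'$ (one that, together with the part of $G-A$ it separates from the rest, can be drawn in a closed disc bounded by the outer cycle of $W'$) of height $h_2(H)$; here $h_1(H)$ is chosen large enough to force one of these outcomes with $h_2(H)$ as large as we need. Provided $h_2(H)$ is large enough relative to $|V(H)|$ and $a(H)$, the Irrelevant Vertex Theorem (in the form that allows a bounded apex set) guarantees that the centre vertex of $W'$ is irrelevant. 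Since the wall, the set $A$, and the flat structure can all be found in polynomial time, the work at each level of the recursion is polynomial, as needed.

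The substance of the proof, and the step I expect to be hardest, is the Irrelevant Vertex Theorem, whose proof reduces to a statement about $k$-linkages for $k$ bounded in terms of $H$. Concretely, suppose some minimal minor model of $H$ in $G$ uses the centre vertex $z$ of the flat wall $W'$. The part of that model lying inside the disc carrying $W'$ meets the boundary of the disc in a bounded set of vertices, and rerouting it so as to avoid $z$ amounts to perturbing a $k$-linkage in the disc off $z$ --- which one does by pushing each strand of the linkage outward along the nested cycles of $W'$, holding its endpoints fixed. Carrying this out in a planar disc is the classical argument; the contribution of this paper is a clean quantitative form of it, and --- for the pieces of the Robertson--Seymour structure theorem that live on a surface of positive genus rather than in a disc --- the surface version announced in the abstract: a $k$-linkage $\mathcal{L}$ in a surface $\Sg$ can be perturbed to meet a prescribed non-separating curve $P$ at most $2k$ times. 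This is exactly what bounds, with an explicit constant, how many of the nested cycles of $W'$ (or, on a surface, how many parallel curves) the rerouting must consume.

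Putting an explicit bound on this perturbation, and combining it with explicit versions of the Grid Minor Theorem and the Flat Wall Theorem, makes the wall heights $h_1(H),h_2(H)$, the apex bound $a(H)$, and the threshold $w(H)$ all computable, and hence the running time as well. By contrast, the dynamic program of the first step, the recursion, and the bookkeeping around the apex set $A$ are routine once the perturbation lemma is in hand.
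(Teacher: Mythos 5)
This statement is quoted from Robertson and Seymour's Graph Minors XIII and is not proved in the paper, so there is no in-paper argument to compare yours against; what the paper actually supplies is one missing ingredient of that proof, namely the computability of the irrelevant-vertex threshold $t(k,\Sigma)$ (Theorems~\ref{bound} and~\ref{protect}). Judged on its own terms, your outline is the standard and correct route: tree-width dichotomy, dynamic programming in the bounded-width case, and the irrelevant-vertex technique otherwise, with the recursion depth bounded by $|V(G)|$. Two caveats. First, it is an outline resting on the Grid Minor Theorem, the Flat Wall Theorem, and the Irrelevant Vertex Theorem as black boxes, and essentially all of the difficulty of Graph Minors XIII is concentrated in the last of these; Robertson and Seymour also do not test for unrooted minor models directly but reduce minor testing to the $k$-disjoint-paths (rooted) problem, a reduction you elide but which is routine. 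Second, your description of the rerouting as ``pushing each strand outward along the nested cycles in a planar disc'' understates where the non-effectivity arises: in the general structure the flat region carrying the wall sits on a surface of positive genus, and it is precisely the surface version of the perturbation --- bounding how often a rerouted linkage must cross a non-separating curve, which is this paper's Theorem~\ref{bound} and its consequence Theorem~\ref{protect} --- that was previously known to exist only non-constructively. You correctly identify this as the hard step and as the paper's contribution, so the proposal is a fair account of the proof strategy rather than a gap, but it should be read as a roadmap, not a proof.
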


Together, these two theorems imply that there \e{exists} a polynomial-time algorithm to test for membership in any minor-closed class of graphs.  Of course, the existence of such an algorithm is highly non-constructive as  $\ex(\mathcal{C})$ is explicitly known for only a few minor-closed classes $\mathcal{C}$.  

The running time of the algorithm from \cite{gm13} depends on a function $t(k, \Sigma)$ for irrelevant vertices for $k$-linkage problems in a surface $\Sg$.  Robertson and Seymour clearly state that $t(k, \Sigma)$ is computable, but give no indication how to compute it.  In the special case that $\Sg$ is the sphere, Adler, Kolliopoulos, Krause, Lokshtanov, Saurabh, and Thilikos~\cite{tightplanar} do obtain an explicit function (of $k$). 

In addition, Kawarabayashi and Wollan~\cite{simpleralgorithm} recently gave a simpler algorithm and shorter proof for the powerful graph minor
decomposition theorem in~\cite{gm16}.  Their approach yields explicit constants for the decomposition algorithm, but again implicitly assumes that $t(k, \Sigma)$ is computable.

In this paper, we show that $t(k, \Sigma)$ is indeed computable, thereby obtaining explicit bounds for graph minors. 
 Before stating our main theorems, we require a few definitions.  In this work we use $\Sg(a,b,c)$ to denote the surface that is the (2-dimensional) sphere with $a$ handles, $b$ crosscaps, and $c$ boundary components, which we call \e{holes}. We set $g(\Sg(a, b, c)) := 2a + b$ and $\h(\Sg(a, b, c)) = c$.

A \e{curve} $\gamma$ in a surface $\Sg$ is a continuous function $\gamma: [0,1] \to \Sg$.  A curve $\gamma$
\begin{itemize}
\item
has \e{ends} $\gamma(0)$ and $\gamma(1)$;

\item
 is a \e{path} if it is injective (or constant);
 
 \item
 is a \e{simple closed curve} if $\gamma(0)=\gamma(1)$ and is injective on $(0,1]$;
 
 \item
 is \e{separating} if $\Sg - \gamma([0,1])$ is disconnected and  \e{non-separating} otherwise.   

\end{itemize}

Let $X \sub \Sg$.  
\begin{itemize}
\item
The boundary and interior of $X$ will be denoted $\bd(X)$ and $\mathsf{int}(X)$, respectively.

\item
A path $\gamma$ is an \e{$X$-path} if the ends of $\gamma$ are in $X$, and $\gamma$ is otherwise disjoint from $X$.    
\end{itemize}

We now define linkages in graphs and in surfaces.  A \e{pattern} $\Pi$ in a graph $G$ is a collection of pairwise disjoint subsets of $V(G)$, where each set in $\Pi$ has size 1 or 2.  

Let $\Pi:=\{ \{s_i, t_i\}  : i \in [k] \}$ be a pattern in $G$ (here $[k]:=\{1, \dots, k\}$ and we allow $s_i=t_i$).  
\begin{itemize}
\item
The \e{vertex set} of $\Pi$ is the set $V(\Pi):=\bigcup \Pi$. 

\item
The \e{size} of $\Pi$ is $|\Pi|=k$.  

\item
 A $\Pi$-linkage in $G$ is a collection $\mathcal{L}:=\{L_1, \dots, L_k\}$ of pairwise disjoint graph-theoretic paths of $G$ where each $L_i$ has ends $s_i$ and $t_i$.  
\end{itemize}
 
Note that if $s_i=t_i$, then $L_i$ is necessarily the path consisting of just the single vertex $s_i$.  

A vertex $v \in V(G)$ is \e{redundant} (with respect to  $\Pi$), provided that $G - v$ has a $\Pi$-linkage if and only if $G$ has a $\Pi$-linkage.  

We  use the same terminology for surfaces.  A \e{pattern} $\Pi$ in a surface $\Sg$ is a collection of pairwise disjoint subsets of $\bd(\Sg)$, each of size 1 or 2.  Let $\Pi:=\{ \{s_i, t_i\}  : i \in [k] \}$ be a pattern in $\Sg$.  A \e{topological $\Pi$-linkage} is a collection $\mathcal{L}:=\{L_1, \dots, L_k\}$ of disjoint $\bd(\Sg)$-paths in $\Sg$ where each $L_i$ has ends $s_i$ and $t_i$.  If $\Sg$ contains a $\Pi$-linkage, we say that $\Pi$ is \e{topologically feasible}. 

Given two linkages $\mathcal{L}$ and $\mathcal{M}$ in a surface $\Sg$, our goal is to perturb $\mathcal{L}$ so that it no longer meets $\mathcal{M}$ very often.  We will only allow a certain 
kind of perturbation of $\mathcal{L}$, which we now define.  
\begin{definition}
A homeomorphism $\phi: \Sg \to \Sg$ is called a \e{$\bd$-homeomorphism}, if $\phi(x)=x$ for each $x \in \bd(\Sg)$.
\end{definition}

We are now prepared to state our first main theorem.  

\begin{theorem} \label{bound}
Let $\Sg$ be a surface and let $\mathcal{L}$ and $\mathcal{M}$ be linkages in $\Sg$ of sizes $k$ and $n$ respectively.  
If $\mathcal{L} \cap \mathcal{M} \cap \bd(\Sg)=\emptyset$ and $\Sg - \mathcal{M}$ is connected, then there is a $\bd$-homeomorphism $\phi: \Sg \to \Sg$ such that $|\phi(\mathcal{L}) \cap \mathcal{M}| \leq k (3^n-1)$.
\end{theorem}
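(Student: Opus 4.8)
The plan is to induct on $n$, with the heart of the matter being the case of a single arc---essentially the statement promised in the abstract. Precisely, the key input, and the step I expect to be the genuine obstacle, is the following \emph{single-arc lemma}: if $M$ is a non-separating arc in a surface $\Sg$ (equivalently, $\Sg-M$ is connected) and $\mathcal{L}$ is a linkage of size $K$ whose ends avoid $M$ and with $\mathcal{L}\cap M\cap\bd(\Sg)=\emptyset$, then there is a $\bd$-homeomorphism $\phi$ of $\Sg$ with $|\phi(\mathcal{L})\cap M|\le 2K$. To prove this I would first make $\mathcal{L}$ transverse to $M$ and then pass to a representative in its $\bd$-homeomorphism orbit minimising $|\mathcal{L}\cap M|$; removing all bigons between the paths $L_j$ and $M$ by innermost-disk arguments leaves only essential crossings. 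The point of the hypothesis is that $\Sg$ cut along $M$ is a \emph{connected} surface, which supplies a simple arc meeting $M$ exactly once; sliding the $L_j$ along such an arc (a homeomorphism supported near the union of $M$ with that arc) together with the no-bigon condition should force each $L_j$ to cross $M$ at most twice once no further reduction is possible. One gets $2K$ rather than $K$ because the two sides of $M$, together with the constraint that the $L_j$ remain pairwise disjoint, genuinely force a second crossing for some paths in some configurations.

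Granting this lemma, the theorem follows by induction on $n$, and it is cleanest to bound the \emph{number of subarcs into which $\mathcal{M}$ cuts $\mathcal{L}$} rather than $|\mathcal{L}\cap\mathcal{M}|$ directly, since that quantity multiplies well under successive cuts. I claim one can choose $\phi$ so that $\mathcal{M}$ cuts $\phi(\mathcal{L})$ into at most $k3^{n}$ pieces; since cutting $k$ disjoint arcs by $\mathcal{M}$ produces exactly $k+|\phi(\mathcal{L})\cap\mathcal{M}|$ pieces, this yields $|\phi(\mathcal{L})\cap\mathcal{M}|\le k3^{n}-k\le k3^{n}$. For $n=0$ there is nothing to do (take $\phi=\mathrm{id}$). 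For the inductive step, note first that $\Sg-M_{n}$ is connected: if $M_{n}$ split $\Sg$ into two nonempty open sets $U$ and $V$, then $U\setminus(M_{1}\cup\cdots\cup M_{n-1})$ and $V\setminus(M_{1}\cup\cdots\cup M_{n-1})$ are nonempty, contained in $\Sg-\mathcal{M}$, and disconnect it, a contradiction. Hence $M_{n}$ is non-separating, and the single-arc lemma gives a $\bd$-homeomorphism $\phi_{1}$ with $|\phi_{1}(\mathcal{L})\cap M_{n}|\le 2k$.

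Now cut $\Sg$ along $M_{n}$ to obtain a surface $\Sg_{n}$, which is connected (as $\Sg-M_{n}$ is) and carries the two copies $M_{n}^{+},M_{n}^{-}$ of $M_{n}$ on its boundary. Then $\phi_{1}(\mathcal{L})$ becomes a linkage $\widehat{\mathcal{L}}$ in $\Sg_{n}$ of size $k+|\phi_{1}(\mathcal{L})\cap M_{n}|\le 3k$, the collection $\mathcal{M}':=\{M_{1},\dots,M_{n-1}\}$ is a linkage in $\Sg_{n}$ with $\Sg_{n}-\mathcal{M}'$ connected (adjoining the boundary arcs $M_{n}^{\pm}$ to the connected set $\Sg-\mathcal{M}$ does not affect connectedness), and $\widehat{\mathcal{L}}\cap\mathcal{M}'\cap\bd(\Sg_{n})=\emptyset$ since the new ends of $\widehat{\mathcal{L}}$ lie on $M_{n}^{\pm}$, which is disjoint from $\mathcal{M}'$. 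By the induction hypothesis in $\Sg_{n}$ there is a $\bd$-homeomorphism $\phi_{2}$ of $\Sg_{n}$ for which $\mathcal{M}'$ cuts $\phi_{2}(\widehat{\mathcal{L}})$ into at most $3^{n-1}\,|\widehat{\mathcal{L}}|\le 3k\cdot 3^{n-1}=k3^{n}$ pieces.

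Finally, $\phi_{2}$ fixes $\bd(\Sg_{n})$ pointwise, hence fixes $M_{n}^{+}$ and $M_{n}^{-}$ pointwise, so it descends to a $\bd$-homeomorphism of $\Sg$---still denoted $\phi_{2}$---that fixes $M_{n}$ pointwise. Put $\phi:=\phi_{2}\circ\phi_{1}$. Cutting $\phi(\mathcal{L})$ by $\mathcal{M}$ may be performed by cutting first along $M_{n}$ and then along $\mathcal{M}'$; since $\phi_{2}$ fixes $M_{n}$ pointwise, the first cut returns exactly $\phi_{2}(\widehat{\mathcal{L}})$, and the second then produces at most $k3^{n}$ subarcs, which is the required bound. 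Everything here past the single-arc lemma is routine bookkeeping about cutting surfaces along arcs and checking that the hypotheses are inherited by the cut surface; the only substantial work is the single-arc lemma, which is where I expect all the difficulty to lie.
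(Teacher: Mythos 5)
Your inductive superstructure is exactly the ``simple induction'' the paper alludes to after Theorem~\ref{specialbound}: cut along $M_n$, observe that each path of $\mathcal{L}$ breaks into at most $3$ pieces, and recurse to get $k\cdot 3^n$ pieces, hence at most $k3^n-k$ crossings. The bookkeeping (non-separation of $M_n$, connectivity of $\Sg_n-\mathcal{M}'$, disjointness of the new endpoints from $\mathcal{M}'$, descending $\phi_2$ back to $\Sg$) is all correct, and your single-arc lemma is precisely the paper's Theorem~\ref{specialbound}.

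The gap is that the single-arc lemma is not ``the genuine obstacle'' you set aside --- it is the entire content of Section~\ref{bounds}, and your sketch of it would not go through as written. Two concrete problems. First, ``make $\mathcal{L}$ transverse, minimise, remove bigons'' only puts each $L_j$ in minimal position within its \emph{homotopy} class relative to $M$; the statement needs minimality over the full $\bd$-homeomorphism orbit (the \emph{type} of $L_j$), and a path can have arbitrarily many essential, bigon-free crossings with $M$ while remaining in its homotopy class (e.g.\ by winding through a handle). Second, and more seriously, the step from ``no further reduction is possible'' to ``each $L_j$ crosses $M$ at most twice'' is asserted (``should force''), not argued, and it is exactly where the simultaneity constraint bites: each individual $L_j$ can be retyped to meet $M$ at most twice (this is the easy fact $\#(P,L)\le 2$), but doing so for all the $L_j$ \emph{at once while keeping them pairwise disjoint} is what the paper's minimal-counterexample argument is for. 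That argument needs the sequence of reductions in Claims~\ref{diskhole}--\ref{samehole} and then a genuine case analysis over the four pseudotypes $(1,\to),(2,\to),(1,\not\to),(2,\not\to)$ of $P$ on non-orientable surfaces, in each case explicitly constructing a replacement path of the same type as $P$ that threads through crosscap- or twisted-handle-enclosing curves and crosses every $L_j$ exactly once, using the cyclic order of the points $x_1,\dots,x_n,x_1',\dots,x_n'$ in which the linkage meets those curves. ``Sliding along an arc dual to $M$'' is a single Dehn-twist-like move and comes with no argument that it decreases the total intersection number in the presence of several mutually disjoint paths. So the reduction is right, but the theorem is not proved until the single-arc lemma is.
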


The corresponding result for orientable surfaces (without boundary) was proven by Lickorish~\cite{lickorish}.   
Recently, Matou\v sek, Sedgwick, Tancer and Wagner~\cite{untangling}  considered essentially the same problem.  Using a different approach, they obtain a bound that is 
polynomial in the size of both linkages, while our bound is exponential in the size of one of the linkages (but linear in the other).  

Our proof is shorter than the approach in~\cite{untangling}, but as mentioned, yields different bounds.  Nonetheless, 
Theorem~\ref{bound} appears to be of independent interest. The motivation in~\cite{untangling} comes from an embedding problem involving 3-manifolds.    

To state our second theorem, we need to define the notion of a protected vertex on a surface.  Let $G$ be a graph embedded in a surface $\Sg$ and let $\Pi$ be a pattern in $G$.  

A vertex $v \in V(G)$ is \e{$t$-protected in $\Sg$} (with respect to $\Pi$) if

\begin{itemize}

\item
there are $t$ vertex disjoint cycles $C_1, \dots, C_t$ of $G$, bounding discs $\D_1, \dots, \D_t$ in $\Sigma$ with $v \in \D_1 \subset \D_2 \subset \dots \subset \D_t$, and

\item
$V(\Pi)$ is disjoint from $\mathsf{int}(\D_{t})$.

\end{itemize}

\begin{theorem} \label{protect}
There exists a computable integer $t:=t(\Sg,k)$  such that for all surfaces $
\Sg$ and all $k \in \mathbb{N}$, if $G$ is a graph embedded in $\Sg$, $\Pi$ is a pattern of size $k$ in $G$, and $v \in V(G)$ is a $t$-protected vertex in $\Sg$ with respect to $\Pi$, then $v$ is redundant.
\end{theorem}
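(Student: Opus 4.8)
The plan is to prove the non-trivial implication: given that $G$ has a $\Pi$-linkage $\mathcal{L}=\{L_1,\dots,L_k\}$, produce a $\Pi$-linkage of $G$ that avoids $v$, where $t=t(\Sg,k)$ is fixed in advance by an explicit formula. (The converse is immediate, since a $\Pi$-linkage of $G-v$ is a $\Pi$-linkage of $G$.) If no $L_i$ passes through $v$ we are done, so assume $v$ lies on $L_1$. Since $C_1$ and $C_2$ are disjoint cycles with $\D_1\sub\D_2$, we have $\D_1\sub\mathsf{int}(\D_2)$, so $v\in\mathsf{int}(\D_2)$; from now on we regard $C_2\sub C_3\sub\dots\sub C_t$ as a nest of $t-1$ cycles enclosing $v$ inside the disc $\D_t$, with no vertex of $V(\Pi)$ in $\mathsf{int}(\D_t)$.

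The first step is to bound the interaction of $\mathcal{L}$ with the nest. After a small perturbation putting $\mathcal{L}$ in general position with respect to the $C_i$, each component of $\mathcal{L}\cap\D_t$ is an arc with both ends on $C_t$; let $m$ be the number of such arcs, so $\mathcal{L}$ meets $C_t$ in exactly $2m$ points, at least one arc passing through $v$. I would choose $\mathcal{L}$, among all $\Pi$-linkages of $G$, to minimize its total number of edges, with the number of edges inside $\D_t$ as a tie-breaker, and then argue that this forces $m\le M$ for a \emph{computable} $M=M(\Sg,k)$. Indeed, between two consecutive crossings of $C_t$ a path of $\mathcal{L}$ runs through $\Sg\bs\mathsf{int}(\D_t)$, and minimality together with the absence of terminals inside $\D_t$ forces each such stretch either to visit a vertex of $V(\Pi)$ or to be topologically necessary; bounding the number of the latter explicitly is exactly what Theorem~\ref{bound} and the straightening argument behind it supply, because $\Sg$ has bounded genus and hole number. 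This is the quantitative heart of the proof.

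With $m$ bounded, the problem becomes planar and can be finished inside $\D_t$. There $G\cap\D_t$ is a plane graph; the $m$ arcs of $\mathcal{L}\cap\D_t$ form a solution, using $v$, to the planar linkage problem whose pattern consists of the $m$ pairs of arc-ends on $C_t$; the cycles $C_2\sub\dots\sub C_{t-1}$ are $t-2$ nested cycles enclosing $v$; and no terminal of the planar pattern lies strictly inside $C_{t-1}$, since all such terminals sit on $C_t$. Applying the explicit planar bound of Adler, Kolliopoulos, Krause, Lokshtanov, Saurabh and Thilikos~\cite{tightplanar} --- or re-deriving it, which is now effective because $M$ is --- there is a computable $f$ such that if $t\ge f(M)$ then $v$ is redundant for this planar problem; hence the $m$ arcs can be rerouted inside $\D_t$ so as to avoid $v$. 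Re-attaching the new arcs to $\mathcal{L}\bs\mathsf{int}(\D_t)$ yields a $\Pi$-linkage of $G$ that misses $v$. Taking $t(\Sg,k):=f(M(\Sg,k))$ completes the argument.

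The step I expect to be the real obstacle is the bound $m\le M(\Sg,k)$. A graph linkage cannot be homotoped freely, so ruling out a path that is forced to cross $C_t$ many times requires combining the minimality of $\mathcal{L}$ with Theorem~\ref{bound} and a careful short-circuiting of excursions into $\D_t$ along the cycles $C_i$, the difficulty being that excursions belonging to different paths may block one another --- and it is precisely here that the abundance of nested cycles gets spent. The remaining steps --- the reduction to the disc, the planar input, and the reassembly --- are routine once this bound is in hand.
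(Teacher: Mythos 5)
There is a genuine gap, and it sits exactly where you predicted: the claimed bound $m\le M(\Sg,k)$ on the number of arcs of $\mathcal{L}\cap\D_t$. No such a priori bound holds, and neither minimality of $\mathcal{L}$ nor Theorem~\ref{bound} supplies one. Theorem~\ref{bound} perturbs \emph{topological} linkages by $\bd$-homeomorphisms; a graph linkage can only be rerouted along edges of $G$ that actually exist, and $G$ may force a path to re-enter $\D_t$ arbitrarily often even when all of those excursions are pairwise homotopic (think of a spiral-like graph around a handle whose only transversal connections pass through $\D_t$: the unique $s$--$t$ route crosses $C_t$ as many times as the spiral winds, every crossing is ``topologically unnecessary,'' and no shortcut exists in $G$). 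Since the planar bound of~\cite{tightplanar} must be applied with the number of terminal pairs of the induced disc problem, an unbounded $m$ makes it impossible to fix $t$ in advance, so the reduction to the disc collapses. (Your reassembly step and the use of the nested cycles $C_2,\dots,C_{t-1}$ would be fine \emph{if} $m$ were bounded; the converse direction and the reduction of the surface to a bounded number of ``strips'' attached to $\D_t$ are also consistent with what the paper does.)

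The paper's proof is organized precisely to avoid ever bounding the crossings of the \emph{given} linkage with $C_t$. It first reduces, via a minimal counterexample and Lemma~\ref{bouquet}, to a graph embedded in a disc with at most $4k+3g(\Sg)$ strips, and then proves Theorem~\ref{insulate} by induction on the number of strips. There, the excursions of $\mathcal{L}$ outside an inner disc are grouped into homotopy classes; classes can be ``thick'' (arbitrarily many parallel excursions through one strip), and this is handled by a dichotomy from Edmonds' matroid intersection theorem: either every strip admits a large family of disjoint paths reaching deep into the nest of cycles on both ends --- in which case one \emph{abandons} $\mathcal{L}$ and realizes instead the topologically clean linkage furnished by Theorem~\ref{bound}, using Lemma~\ref{buffer} and the cylindrical-grid Lemma~\ref{cgrid} --- or some strip has a small cut, which lets one either delete a strip or shrink to a smaller disc and recurse. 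If you want to salvage your outline, you would need to replace ``bound $m$ for the minimal $\mathcal{L}$'' with an argument of this either/or type; as written, the quantitative heart of your proof is missing.
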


We let $\tower(a_1, \dots, a_n)$ be defined inductively as $\tower(a_1)=a_1$ and $\tower(a_1, \dots, a_n)=\tower(a_1, \dots, a_{n-1})^{a_n}$.  The proof of Theorem \ref{protect} shows that we may take $t(\Sg,k)=\tower(100, 200, \dots, 100(4k+3g(\Sg)), k100^{4k+3g(\Sg)})$, although we have not attempted to optimize $t(\Sg,k)$.  Mazoit \cite{mazoit2013} has since simplified our proof of Theorem \ref{protect}, showing that it suffices to take $t(\Sg,k)=C^{k+g(\Sg)}$, for some constant $C$.

The proofs of both of our main theorems do not rely on any of the results in the graph minors series.  

The rest of the paper is organized as follows.  Section~\ref{bounds} contains the proof of Theorem~\ref{bound}. In Section~\ref{main} we derive Theorem~\ref{protect} as a corollary to a slightly different version.  We end by proving the alternative version of Theorem~\ref{protect} in Section~\ref{technical}.

\section{Bounding Intersection Numbers} \label{bounds}
In this section, we prove Theorem~\ref{bound}.  Before starting the proof, we make a few more important definitions.  Let $\Sg$ be a surface and $X$ be a $\bd(\Sg)$-path or a simple closed curve in $\Sg$ disjoint from $\bd(\Sg)$.  We define $\Sg \text{\LeftScissors}X$ to be the surface(s) obtained from $\Sg$ by cutting out a small tubular neighbourhood $\epsilon(X)$ of $X$.  If $X$ is disjoint from some family of curves $\mathcal{C}$ we are considering, we always assume that $\epsilon(X)$ is also disjoint from $\mathcal{C}$.

\begin{definition}
Let $C$ be a simple closed curve in $\Sg$ disjoint from $\bd(\Sg)$. We define $C$ to be 
\begin{itemize}
\item
\e{handle-enclosing}, if a component of $\Sg \text{\LeftScissors}C$ is homeomorphic to $\Sg(1,0,1)$ (a torus with a hole),
\item
\e{crosscap-enclosing}, if a component of $\Sg\text{\LeftScissors}C$ is homeomorphic to $\Sg(0,1,1)$ (a M\"obius band), and
\item
\e{twisted handle-enclosing}, if a component of $\Sg\text{\LeftScissors}C$ is homeomorphic to $\Sg(0,2,1)$ (a Klein bottle with a hole).
\end{itemize}
\end{definition}

\begin{definition}
Two $\bd(\Sg)$-paths $P$ and $P'$ have the same \e{type}, denoted $P \sim P'$, if there is a $\bd$-homeomorphism $\phi$ of $\Sg$ such that $\phi(P)=P'$.  
\end{definition}

Note that for any distinct $x,y \in \bd(\Sg)$, $\sim$ is an equivalence relation on the set of all $\bd(\Sigma)$-paths with ends $x$ and $y$.  
The important thing to note is that there is only a \emph{finite} number of types of $\bd(\Sigma)$-paths with ends $x$ and $y$.   This follows from
the classification theorem for surfaces with holes. 

\begin{definition}
The \e{pseudotype} of a $\bd(\Sg)$-path $P$ is the homeomorphism class of $\Sg \text{\LeftScissors} P$.  
\end{definition}

We now introduce some convenient notation encoding pseudotypes of non-separating $\bd(\Sg)$-paths with ends on the same hole. Let $P$ be such a path.  We say that $P$ is \e{$1$-sided} if $g(\Sg\text{\LeftScissors}P)=g(\Sg)-1$ and $P$ is \e{$2$-sided} if $g(\Sg\text{\LeftScissors}P)=g(\Sg)-2$.
We define $P$ to be \e{orientable} if $\Sg\text{\LeftScissors}P$ is orientable, and \e{non-orientable} otherwise.  
There are only four possible pseudotypes for $P$.  These are determined by the number $i \in [2]$ of sides of $P$ and whether or not $\Sg\text{\LeftScissors}P$ is orientable.  We use the symbols $(i,\rightarrow)$ and $(i,\not\rightarrow)$ to denote that $P$ has $i$ sides and $\Sg\text{\LeftScissors}P$ is or is not orientable, respectively.

The following four lemmas summarize the relevant topological facts connecting types and pseudotypes.  They all follow by cutting along a curve of the prescribed pseudotype and applying the classification theorem for surfaces with boundary.  

\begin{lemma} \label{topology1}
For every orientable surface $\Sg$, any two non-separating $\bd(\Sigma)$-paths with the same ends have the same type.
\end{lemma}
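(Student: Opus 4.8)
The plan is to use the classification of orientable surfaces together with a ``change of coordinates'' argument. Fix an orientable surface $\Sg$ with a distinguished hole boundary and two non-separating $\bd(\Sg)$-paths $P$ and $P'$ with the same ends $x,y$. If $x$ and $y$ lie on the same boundary component, cutting along $P$ (respectively $P'$) produces a connected surface $\Sg'$ (respectively $\Sg''$) because $P$ is non-separating; if they lie on different components, cutting merges those two holes into one. In either case the resulting surface is orientable and, by the classification theorem, is determined up to homeomorphism by its genus and number of holes. Since cutting along a non-separating $\bd(\Sg)$-path with both ends on the boundary always drops the genus by exactly one and changes the hole count in a way depending only on whether the ends share a component (not on the path itself), $\Sg'$ and $\Sg''$ are homeomorphic by a homeomorphism that I will arrange to fix the boundary pointwise. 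The first key step is therefore to record precisely how cutting affects $(g,\h)$ and to check connectedness.

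The second key step is to lift a boundary-fixing homeomorphism $\psi\colon \Sg' \to \Sg''$ back to a $\bd$-homeomorphism $\phi\colon \Sg \to \Sg$ with $\phi(P)=P'$. Here one must be careful: cutting along $P$ creates two ``copies'' of $P$ in $\bd(\Sg')$, and to glue back correctly $\psi$ must match the two copies of $P$ in $\Sg'$ with the two copies of $P'$ in $\Sg''$ compatibly with orientations of these arcs. So rather than taking an arbitrary homeomorphism from the classification theorem, I would first fix a homeomorphism of the relevant boundary arcs (the two copies of $P$ to the two copies of $P'$, extended by the identity on the rest of $\bd(\Sg')=\bd(\Sg'')$ away from $x,y$), then invoke the classification theorem in its relative form (homeomorphism classes of orientable surfaces with boundary rel a prescribed homeomorphism of the boundary) to extend this to all of $\Sg'$. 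Regluing then yields $\phi$ with $\phi(P)=P'$ and $\phi|_{\bd(\Sg)}=\mathrm{id}$, i.e.\ $P\sim P'$.

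The main obstacle is the relative/boundary-fixing part: the plain classification theorem only gives homeomorphism up to arbitrary boundary behaviour, and one needs the stronger statement that any homeomorphism between the boundaries of two homeomorphic compact orientable surfaces extends to the interiors provided it is orientation-compatible. This is standard (it follows from the classification, or from the fact that $\pi_0$ of the homeomorphism group rel boundary is controlled by the mapping class group, which acts transitively enough here), but it is the step that genuinely uses orientability: in the non-orientable case one cannot freely prescribe how the two copies of the cut arc are identified, which is exactly why the lemma is stated only for orientable $\Sg$ and why the non-orientable analogue (handled via pseudotypes later) is more delicate. A minor additional point to dispatch is the degenerate case where $P$ is a constant path ($x=y$); there the statement is immediate since $\Sg-P=\Sg$ and both paths are $\bd$-homeomorphic to the constant path by a small isotopy.
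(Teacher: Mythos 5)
The paper states Lemma~\ref{topology1} without proof, presenting it (together with Lemmas~\ref{topology2}--\ref{topology4}) as a standard consequence of the classification theorem for surfaces; your argument is a correct fleshing-out of exactly that standard ``change of coordinates'' proof: cut along $P$ and $P'$, observe that the two cut surfaces are connected, orientable, and have the same Euler characteristic and number of holes (hence are homeomorphic), choose the homeomorphism to be the identity on the old boundary and to match the two copies of $P$ with the two copies of $P'$ compatibly with the gluing maps, and reglue. You correctly isolate the only delicate point, namely that a prescribed orientation-compatible boundary homeomorphism extends over the interior, which is where orientability is used. One small correction to your bookkeeping: the genus drops by one only when the two ends lie on the same hole (in which case that hole splits into two); when the ends lie on distinct holes the genus is unchanged and the two holes merge into one. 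In either case the resulting $(g,\h)$ depends only on the ends and not on the path, which is all your argument needs.
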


\begin{lemma} \label{topology2}
Let $\Sigma$ be a non-orientable surface and let $x$ and $y$ be distinct points on the same hole of $\bd(\Sigma)$.  
If $P$ and $P'$ are non-separating $\bd(\Sigma)$-paths with ends
$x$ and $y$, then $P$ and $P'$ have the same type if and only if $P$ and $P'$ have the same pseudotype.  
\end{lemma}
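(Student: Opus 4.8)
The forward direction is immediate: if $P$ and $P'$ have the same type then there is a $\bd$-homeomorphism $\phi$ with $\phi(P)=P'$, and $\phi$ restricts to a homeomorphism $\Sg-P \to \Sg-P'$, so $P$ and $P'$ have the same pseudotype. The content is in the converse, so the plan is to assume $\Sg-P$ and $\Sg-P'$ are homeomorphic and produce a $\bd$-homeomorphism of $\Sg$ carrying $P$ to $P'$.

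The natural approach is to work with the cut surfaces. Cutting $\Sg$ along a $\bd(\Sg)$-path $P$ whose ends $x,y$ lie on a single hole $h$ yields a surface $\Sg' := \Sg \backslash\!\!\backslash P$ (the ``pseudotype'' surface, up to homeomorphism $\Sg-P$), in which the hole $h$ together with the two copies of $P$ forms a single new boundary component; the other holes of $\Sg$ are untouched. The key observation is that $\Sg$ is recovered from $\Sg'$ by gluing back along an arc $\alpha \subset \bd(\Sg')$, where $\alpha$ is one of the two copies of $P$, and this gluing is determined up to homeomorphism by \emph{which} boundary component of $\Sg'$ the arc $\alpha$ lies on and which sub-arc of it $\alpha$ is. First I would check that, since $P$ is non-separating with both ends on the same hole, $\Sg'$ has exactly one boundary component fewer or the same number as $\Sg$ — in fact the relevant hole is merged, so $\h(\Sg')=\h(\Sg)$ in the 1-sided case and one must track the crosscap/genus bookkeeping; the pseudotype invariants $(1\text{ or }2, \to \text{ or } \not\to)$ were precisely set up to record the homeomorphism type of $\Sg'$. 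So a homeomorphism $\Sg-P \cong \Sg-P'$ gives a homeomorphism $\psi: \Sg' \to \Sg''$ of cut surfaces.

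The main step is then an \emph{arc-uniqueness} claim: any two properly embedded arcs $\alpha, \alpha'$ on a fixed (possibly non-orientable) surface $F$ with boundary, both of which have both endpoints on the same boundary component of $F$ and which give the same surface when used as gluing arcs (equivalently, are both non-separating and "co-orientation compatible"), are related by a homeomorphism of $F$ fixing $\bd F$ pointwise \emph{outside a neighborhood of that boundary component}, and the gluing then descends to a $\bd$-homeomorphism of $\Sg$. This is where non-orientability is genuinely used: on an orientable surface there is essentially no choice (this is Lemma~\ref{topology1}), but on a non-orientable surface one must be careful that the two copies of $P$ on $\bd(\Sg')$ can be interchanged by an orientation-reversing self-homeomorphism of $\Sg'$, which is exactly what lets us match up $\psi$ with the gluing data regardless of orientation. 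I would prove the arc-uniqueness claim by the standard change-of-coordinates principle for surfaces: a regular neighborhood of $\bd(\Sg)$-hole $\cup\, P$ is a disc-with-a-band (an annulus-or-Möbius-band), its complement is $\Sg'$ up to the identifications, and the homeomorphism type of $\Sg$ is determined by the homeomorphism type of $\Sg'$ together with the isotopy class of the gluing arc, of which there is only one of each relevant flavor by the classification of surfaces — after composing with a self-homeomorphism of $\Sg'$ we may assume $\psi$ matches the arcs, and then $\psi$ extends over the band.

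The step I expect to be the main obstacle is making the ``arc-uniqueness''/change-of-coordinates step rigorous without invoking heavy mapping class group machinery: specifically, showing that a self-homeomorphism of $\Sg'$ can be chosen to (i) take the boundary arc $\alpha$ to $\alpha'$ \emph{setwise}, (ii) respect the identification pattern (which two points of $\bd(\Sg')$ get glued), and (iii) be adjusted to fix all the \emph{other} holes of $\Sg$ pointwise, so that the induced map on $\Sg$ is a genuine $\bd$-homeomorphism and not merely a homeomorphism permuting holes. Handling (iii) requires a lemma that any homeomorphism of a surface fixing each boundary component setwise is isotopic to one fixing the boundary pointwise (``combing'' along collar neighborhoods), and one must also confirm that $P$ and $P'$ having the same \emph{ends} $x,y$ (not just the same hole) can be arranged during this process; I would cite or quickly prove this collar-isotopy fact and then assemble the pieces.
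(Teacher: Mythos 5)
The paper offers no proof of Lemma~\ref{topology2}: it is one of four topological facts (Lemmas~\ref{topology1}--\ref{topology4}) stated as consequences of the classification theorem for surfaces, so there is no argument of the authors' to compare yours against. Your outline --- trivial forward direction; for the converse, cut along the arc, invoke the classification to identify the cut surfaces, and reglue via a change-of-coordinates homeomorphism adjusted on collars to fix $\bd(\Sg)$ pointwise --- is the standard and essentially the only route, and is surely what the authors have in mind.

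That said, as written the proposal has a genuine gap at exactly the step you flag as the main obstacle, and the specific mechanism you propose there does not cover all cases. You say non-orientability enters because ``the two copies of $P$ on $\bd(\Sg')$ can be interchanged by an orientation-reversing self-homeomorphism of $\Sg'$.'' But the lemma's hypothesis is that $\Sg$ is non-orientable, not that the cut surface $\Sg'=\Sg-P$ is: for pseudotypes $(1,\to)$ and $(2,\to)$ the cut surface is orientable, and then no self-homeomorphism of $\Sg'$ that fixes the arcs $h_1,h_2$ of the original hole pointwise can be orientation-reversing, so the swap you rely on is unavailable. In those cases one must instead argue that the identification data is forced (e.g., because regluing in the orientation-compatible way would make $\Sg$ orientable, contradicting the hypothesis), which is a different argument and is precisely where the non-orientability of $\Sg$ is actually used. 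A second, smaller imprecision: cutting along $P$ turns $h$ and the two copies of $P$ into a \emph{single} new boundary circle only in the $1$-sided case; in the $2$-sided case it produces \emph{two} new boundary circles (consistent with $\h(\Sg-P)=\h(\Sg)+1$ there), and the arc-matching bookkeeping in the regluing step differs between the two cases. To complete the proof you need to run the change-of-coordinates argument separately for the four pseudotypes (or at least for $1$-sided versus $2$-sided and orientable versus non-orientable complement), verifying in each that a homeomorphism $\Sg-P\to\Sg-P'$ can be corrected to respect the boundary arcs, the endpoint labels $x,y$, and the orientation constraints simultaneously.
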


\begin{lemma} \label{topology3}
Let $\Sigma$ be a non-orientable surface and let $x$ and $y$ be points on distinct holes $H_x$ and $H_y$ of $\bd(\Sigma)$.
Let $a$ and $b$ be distinct points on $H_x - \{x\}$ and $c$ and $d$ be distinct points on $H_y - \{y\}$.  Let $P_1$ and $P_2$ be $\bd(\Sigma)$-paths
with ends $x$ and $y$ and let $H_i$ be the hole in
$\Sigma \text{\LeftScissors} P_i$ such that $\{a,b,c,d\} \subseteq H_i$.  Then $P_1$ and $P_2$ have the same type if and only if $\{a,b,c,d\}$ has the same
cyclic order in $H_1$ and $H_2$.  
\end{lemma}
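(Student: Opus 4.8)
The plan is to establish the forward implication by a direct argument and the backward implication by cutting $\Sigma$ along $P_1$ and $P_2$ and regluing.

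For the forward direction, I would start from a $\bd$-homeomorphism $\phi$ with $\phi(P_1)=P_2$. Since $P_2\cap\bd(\Sigma)=\{x,y\}$ and regular neighbourhoods are unique up to ambient isotopy fixing $\bd(\Sigma)$, after an isotopy we may take $\phi$ to carry a chosen tubular neighbourhood of $P_1$ onto the corresponding one of $P_2$, so that $\phi$ restricts to a homeomorphism $\Sigma_1\to\Sigma_2$. This restriction is the identity on $\bd(\Sigma)$ away from small arcs around $x$ and $y$; in particular it is the identity on the two sub-arcs of $H_1$ inherited from $H_x-\{x\}$ and $H_y-\{y\}$, which contain $a,b$ and $c,d$, and it carries $H_1$ onto $H_2$. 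A homeomorphism of circles that is the identity on a sub-arc is orientation-preserving, hence preserves the cyclic order of $\{a,b,c,d\}$; this gives the forward direction.

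For the backward direction I would argue as follows. First, an arc with ends on two distinct holes of a connected surface is non-separating, so $\Sigma_1$ and $\Sigma_2$ are connected; each has $\h(\Sigma)-1$ holes, and since $\chi(\Sigma_i)=\chi(\Sigma)+1$ and $\Sigma_i$ is non-orientable — if it were orientable, then regluing its two copies of $P_i$ so as to recover the distinct holes $H_x,H_y$ would make $\Sigma$ orientable — one gets $g(\Sigma_i)=g(\Sigma)$, so $\Sigma_1$ and $\Sigma_2$ are homeomorphic. Next, the hole $H_i$ decomposes as a cyclic union of four arcs $H_x-\{x\}$, $P_i^+$, $H_y-\{y\}$, $P_i^-$, with $a,b$ on the first and $c,d$ on the third; the hypothesis that $\{a,b,c,d\}$ has the same cyclic order in $H_1$ and $H_2$ is precisely the statement that these four-arc decompositions agree combinatorially. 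Using the classification of surfaces, together with the non-orientability of $\Sigma_i$ (so that no orientation obstruction intervenes), I would then produce a homeomorphism $\psi\colon\Sigma_1\to\Sigma_2$ that is the identity on every hole other than $H_1$, is the identity on the arcs $H_x-\{x\}$ and $H_y-\{y\}$, and carries $P_1^+$ to $P_2^+$ and $P_1^-$ to $P_2^-$; adjusting $\psi$ by isotopies supported in collars, I would arrange in addition that $\psi$ intertwines the identifications $P_i^+\cong P_i^-$ used to reglue $\Sigma_i$ back to $\Sigma$. Regluing, $\psi$ descends to a $\bd$-homeomorphism $\phi$ of $\Sigma$ with $\phi(P_1)=P_2$, so $P_1\sim P_2$.

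I expect the main obstacle to be the construction of $\psi$: obtaining a single homeomorphism of the cut surfaces that respects simultaneously all the prescribed boundary data — the untouched holes, the four labelled arcs of $H_i$ with the marked points $a,b,c,d$, and the regluing identifications — and not merely an abstract homeomorphism $\Sigma_1\cong\Sigma_2$. This is a ``change of coordinates'' argument for surfaces with marked boundary; it is here that the cyclic-order hypothesis is consumed, and here that non-orientability is essential, since in the orientable case the induced boundary orientation would force the cyclic order and both implications would be trivial (cf.\ Lemma~\ref{topology1}). The routine-but-fiddly point along the way is checking that cutting along $P_i$ preserves connectedness, non-orientability, and genus, so that indeed $\Sigma_1\cong\Sigma_2$.
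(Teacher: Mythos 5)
The paper offers no proof of Lemma~\ref{topology3} at all: it is one of four facts (Lemmas~\ref{topology1}--\ref{topology4}) asserted to ``follow from the classification theorem for surfaces,'' so there is no argument of the authors' to compare yours against. Your proposal is a correct way to fill in the claim, and it isolates the two points that actually carry content. First, the reduction to $\Sigma_1\cong\Sigma_2$: the observation that $\Sigma_i$ must be non-orientable because regluing $P_i^+$ to $P_i^-$ splits the single hole $H_i$ back into the two holes $H_x,H_y$, and a band attached to one boundary circle of an \emph{orientable} surface so as to split that circle into two is necessarily the orientation-compatible attachment, is exactly right and is the step most easily gotten wrong (cutting a non-orientable surface along an arbitrary arc can certainly produce an orientable surface; it is the ``two holes from one'' condition that rules this out here). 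Combined with $\chi(\Sigma_i)=\chi(\Sigma)+1$ and $\h(\Sigma_i)=\h(\Sigma)-1$, the classification theorem then gives $\Sigma_1\cong\Sigma_2$ independently of the cyclic order, so the cyclic order is genuinely the residual invariant. Second, the change-of-coordinates step: what you need is that a homeomorphism prescribed on the boundary of a compact connected \emph{non-orientable} surface extends over the surface (no orientation coherence condition on the boundary components, unlike the orientable case); the cyclic-order hypothesis is consumed exactly in checking that the piecewise boundary map (identity on $H_x-\{x\}$ and $H_y-\{y\}$, $P_1^\pm\mapsto P_2^\pm$) assembles into a circle homeomorphism $H_1\to H_2$ at all, and the forward direction is the contrapositive of this consistency check. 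You state rather than prove the extension fact, but you name it correctly as the crux, and it is standard; given that the paper asserts the entire lemma without proof, your level of detail is appropriate and I see no gap in the logic.
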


The previous three lemmas completely describe when two non-separating paths are of the same type.  The next lemma classifies types of
separating paths.

\begin{lemma} \label{topology4}
Let $\Sigma$ be a surface, $x$ and $y$ be distinct points on the same hole $H$ of $\bd(\Sigma)$, and $P$ and $P'$ be separating $\bd(\Sg)$-paths with ends $x$ and $y$.
Then $P$ and $P'$ have the same type if and only if there exists an ordering $\Sigma_1, \Sigma_2$ of the components of $\Sigma \text{\LeftScissors} P$ and an ordering  $\Sigma_1', \Sigma_2'$  of the components of $\Sigma \text{\LeftScissors} P'$ so that for $i=1,2$,
$\Sigma_i \cong \Sigma_i'$  and
$\Sigma_i \cap \bd(\Sigma)=\Sigma_i' \cap \bd(\Sigma)$.
\end{lemma}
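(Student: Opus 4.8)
The plan is to establish the two implications separately; the forward one is routine and the converse carries all of the content.

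For the forward implication, observe that if $\phi$ is a $\bd$-homeomorphism of $\Sg$ with $\phi(P)=P'$, then $\phi$ restricts to a homeomorphism of $\Sg-P$ onto $\Sg-P'$ and so carries the two components of $\Sg-P$ bijectively onto those of $\Sg-P'$; ordering them so that $\phi(\Sigma_i)=\Sigma_i'$ gives $\Sigma_i\cong\Sigma_i'$, and since $\phi$ fixes $\bd(\Sg)$ pointwise we also get $\Sigma_i\cap\bd(\Sg)=\phi(\Sigma_i\cap\bd(\Sg))=\Sigma_i'\cap\bd(\Sg)$.

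For the converse, fix an ordering as in the statement and set $B_i:=\Sigma_i\cap\bd(\Sg)=\Sigma_i'\cap\bd(\Sg)$; let $\alpha$ and $\beta$ be the two arcs into which $x$ and $y$ cut $H$, labelled so that $\mathsf{int}(\alpha)\sub B_1$ and $\mathsf{int}(\beta)\sub B_2$. Cutting $\Sg$ along $P$ yields compact surfaces $\overline{\Sigma_1},\overline{\Sigma_2}$ whose boundaries consist of the holes lying in $B_i$ together with one further circle ($\alpha\cup P$ for $i=1$, and $\beta\cup P$ for $i=2$); cutting along $P'$ yields $\overline{\Sigma_1'},\overline{\Sigma_2'}$ with the same holes and with further circle $\alpha\cup P'$, resp.\ $\beta\cup P'$. (It is routine that $\Sigma_i\cong\Sigma_i'$ passes to the cut surfaces, so that $\overline{\Sigma_i}\cong\overline{\Sigma_i'}$.) The strategy is now to construct, for $i=1,2$, a homeomorphism $\psi_i\colon\overline{\Sigma_i}\to\overline{\Sigma_i'}$ that restricts to the identity on $B_i$; any such $\psi_i$ automatically maps the circle $\alpha\cup P$ to $\alpha\cup P'$ fixing all of $\alpha$, hence maps $P$ onto $P'$ fixing $\{x,y\}$. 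Given $\psi_1$ and $\psi_2$, the map $h:=(\psi_1|_P)\circ(\psi_2|_P)^{-1}$ is a self-homeomorphism of $P'$ fixing $\{x,y\}$, hence extends by the identity on $\beta$ to a self-homeomorphism of the circle $\beta\cup P'$ that is isotopic to the identity rel $\beta$; pushing this isotopy inward along a collar of $\beta\cup P'$ in $\overline{\Sigma_2'}$ produces a homeomorphism $g$ of $\overline{\Sigma_2'}$ that is the identity on $B_2$ and equals $h$ on $P'$, so that $g\circ\psi_2$ still restricts to the identity on $B_2$ and now agrees with $\psi_1$ on $P$. Then $\psi_1$ and $g\circ\psi_2$ glue along $P\mapsto P'$ into a homeomorphism of $\Sg$ that is the identity on $\bd(\Sg)$ and sends $P$ to $P'$, as required.

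The remaining step---constructing the $\psi_i$---is where essentially all the difficulty lies. Since $\overline{\Sigma_i}\cong\overline{\Sigma_i'}$, the classification of compact surfaces gives that they agree in genus, orientability type, and number of boundary components, and their boundaries carry the ``same'' distinguished union $\overline{B_i}$ of boundary circles together with one further arc (namely $\alpha$ or $\beta$) lying inside one more boundary circle. What must be shown is that an abstract homeomorphism between them can be adjusted so as to restrict to the identity on $\overline{B_i}$. The ingredients are standard: homeomorphisms of a connected surface realize every permutation of its boundary components (slide one hole past another through a pair of pants); a self-homeomorphism of a circle is isotopic either to the identity or to a reflection, and one of an arc fixing its endpoints is isotopic to the identity rel endpoints; and boundary isotopies extend inward along collars. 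The genuine obstacle is the orientation phenomenon peculiar to orientable surfaces---one cannot, for instance, reflect a single boundary circle of an annulus while fixing the other---so the work is to check that this never actually obstructs the construction. On the circle $\alpha\cup P$ the required map fixes the arc $\alpha$ pointwise, hence is ``orientation preserving'' there, so any residual difficulty can only involve the full holes of $B_i$, and there it is absorbed using the orientation-reversing self-homeomorphisms that the pieces themselves admit; for non-orientable $\Sg$ there is nothing to check at all. Carrying this out uniformly for the finitely many surface types that can occur is the heart of the argument.
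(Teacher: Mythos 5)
Your reduction of the lemma to the existence of homeomorphisms $\psi_i\colon\overline{\Sigma_i}\to\overline{\Sigma_i'}$ restricting to the identity on $B_i$, and the collar-and-gluing argument that assembles a $\bd$-homeomorphism from the $\psi_i$, are both sound (and the forward implication is fine). The gap is exactly where you wave at the orientation issue: the claims that the obstruction is ``absorbed using the orientation-reversing self-homeomorphisms that the pieces themselves admit'' and that ``for non-orientable $\Sg$ there is nothing to check at all'' are both false. A piece $\Sigma_i$ can be orientable even when $\Sg$ is not, and when $\Sigma_i$ is orientable its embedding in $\Sg$ determines a correspondence between the boundary orientations of the holes in $B_i$ and the boundary orientation of $\alpha$: namely, the pairs of orientations induced by a common orientation of $\overline{\Sigma_i}$. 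Any homeomorphism $\overline{\Sigma_i}\to\overline{\Sigma_i'}$ that is the identity on one hole of $B_i$ and on $\alpha$ must carry this correspondence to the one determined by $\Sigma_i'$, and these correspondences are \emph{not} determined by the homeomorphism type of $\Sigma_i$ together with the set $\Sigma_i\cap\bd(\Sg)$. Your own earlier remark that one cannot reflect one boundary circle of an annulus while fixing the other is precisely this obstruction; composing with an orientation-reversing self-homeomorphism of the piece cannot remove it, because such a map cannot fix $\alpha$ pointwise while reversing a hole.

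In fact the backward implication of the lemma fails as stated, so no repair of this step is possible without strengthening the hypotheses. Take $\Sg\cong\Sg(0,2,2)$ with holes $H$ (containing $x,y$) and $\delta$. Let $P$ be an arc such that $\overline{\Sigma_1}$ is an annulus with boundary circles $\delta$ and $\alpha\cup P$ lying entirely in the planar part of $\Sg$, and let $P'$ be the frontier arc of a regular neighbourhood of $\delta\cup A\cup\alpha$, where $A$ is an arc from $\delta$ to $\alpha$ crossing the core of one crosscap exactly once. Then $\overline{\Sigma_1'}$ is again an annulus with boundary $\delta$ and $\alpha\cup P'$, and in both cases the complementary piece is connected, has Euler characteristic $-1$, one boundary circle $\beta\cup P$ (resp.\ $\beta\cup P'$), and is non-orientable, hence homeomorphic to $\Sg(0,2,1)$; so the decomposition data of the lemma match. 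But transporting a local orientation from $\delta$ to $\alpha$ through $\Sigma_1$ and through $\Sigma_1'$ gives opposite answers (the two routes differ by an orientation-reversing loop), so no homeomorphism $\overline{\Sigma_1}\to\overline{\Sigma_1'}$ is the identity on both $\delta$ and $\alpha$, and hence no $\bd$-homeomorphism of $\Sg$ carries $P$ to $P'$. Your argument does go through when $\Sg$ is orientable: there a global orientation of $\Sg$ induces the \emph{same} boundary orientation on each shared hole and on $\alpha$ from both $\Sigma_i$ and $\Sigma_i'$ (these subsets of $\bd(\Sg)$ have only one side), so an orientation-preserving $\psi_i$ can be normalized to the identity on $B_i$ one boundary circle at a time. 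For non-orientable $\Sg$ the correct statement needs an additional invariant recording this orientation correspondence (in the spirit of the cyclic-order condition in Lemma~\ref{topology3}).
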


\begin{definition}
A path $P$ in a surface $\Sg$ is \e{contractible} if $P$ is a $\delta$-path for some hole $\delta$ of $\Sg$ and some component of $\Sg-P$ is an open disk.
\end{definition}

\begin{definition}
Two $\bd(\Sg)$-paths are \e{homotopic} if there is a homotopy between them that always has its endpoints on $\bd(\Sg)$.  
\end{definition}
%
%

The final definition we require concerns intersection numbers of curves.  

\begin{definition}
The \e{geometric intersection} of a $\bd(\Sg)$-path $P_1$ with a $\bd(\Sg)$-path $P_2$ is defined to be 
\[
\#(P_1,P_2):=\min \{|P_1 \cap P_2'|: \text{$P_2'$ is of the same type as $P_2$} \}.
\]
\end{definition}

Note that for any two $\bd(\Sg)$-paths $P_1$ and $P_2$, we have $\#(P_1,P_2) \leq 2$ by the previous lemmas.  Furthermore, in an orientable surface $\Sg$, the type of a non-separating $\bd(\Sg)$-path is determined by its pseudotype.  Therefore, the following lemma follows by an easy case analysis.  

\begin{lemma} \label{orientable0}
If $\Sg$ is an orientable surface and $P_1$ and $P_2$ are non-separating $\bd$-paths in $\Sg$ with $P_1 \cap P_2 \cap \bd(\Sg)=\emptyset$, then $\#(P_1, P_2)=0$.  
\end{lemma}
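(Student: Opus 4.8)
The claim is that in an orientable surface $\Sg$, any two non-separating $\bd(\Sg)$-paths $P_1$ and $P_2$ can be made disjoint after a $\bd$-homeomorphism applied to (a representative of the type of) $P_2$. The plan is to exploit Lemma~\ref{topology1}: in an orientable surface, the type of a non-separating $\bd(\Sg)$-path is determined by its ends. So it suffices to exhibit \emph{any} non-separating $\bd(\Sg)$-path $P_2'$ with the same ends as $P_2$ that is disjoint from $P_1$; since $P_2' \sim P_2$, this witnesses $\#(P_1,P_2)=0$.

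First I would reduce to the case where the ends of $P_2$ lie on $\bd(\Sg)$ but not on $P_1$ itself — if an end of $P_2$ coincides with an end of $P_1$, the two paths share only that boundary point, which does not count toward $|P_1 \cap P_2'|$ in the relevant sense, or one can perturb within the boundary; I would state this carefully. Next, cut $\Sg$ open along a small tubular neighbourhood of $P_1$ to obtain a surface $\Sg_1$; because $P_1$ is a non-separating $\bd(\Sg)$-path, $\Sg_1$ is connected, and it is still orientable (cutting an orientable surface keeps it orientable). Crucially, $\Sg_1$ has genus $g(\Sg)$ or $g(\Sg)-1$ but is still a positive-genus or at-least-two-hole surface — in fact since $P_1$ was non-separating, $\Sg_1$ has a handle "used up" by the cut, so $g(\Sg_1)\ge g(\Sg)-1$ but more to the point $\Sg_1$ still contains a non-separating simple closed curve or a non-separating arc between the appropriate pair of boundary points. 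The ends of $P_2$ survive in $\bd(\Sg_1)$, disjoint from the scar of $P_1$.

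Then I would construct, inside $\Sg_1$, a non-separating $\bd(\Sg_1)$-path $Q$ joining the two images of the ends of $P_2$: one picks an arc that runs through a handle of $\Sg_1$ (or, if the ends lie on distinct holes, an arc connecting those holes — such an arc is automatically non-separating). Viewing $Q$ back in $\Sg$ gives a path $P_2'$ with the same ends as $P_2$, disjoint from $P_1$ by construction. The remaining point is to check that $P_2'$ is non-separating \emph{in $\Sg$}: I would verify that $\Sg - P_2'$ is connected by tracking connectivity through the cut, using that $Q$ was chosen non-separating in $\Sg_1$ and that re-gluing along the $P_1$-scar only identifies boundary, preserving connectivity. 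By Lemma~\ref{topology1}, $P_2' \sim P_2$, so $\#(P_1,P_2) \le |P_1 \cap P_2'| = 0$.

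The main obstacle is the careful bookkeeping in the last step: ensuring that a path which is non-separating after cutting along $P_1$ remains non-separating in the original surface, and handling the degenerate configurations (ends of $P_1$ and $P_2$ on the same hole, possibly interleaved, or $P_2$ having both ends equal). These are topological case checks rather than deep difficulties, but they are where an argument can go wrong if one is not precise about whether "intersection" counts shared endpoints on $\bd(\Sg)$. I would organize the proof so that all such degeneracies are dispatched by a preliminary normalization, after which the cut-and-reconnect argument applies uniformly.
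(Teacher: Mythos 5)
The paper states this lemma without proof (it is one of the topological facts following from the classification of surfaces), and your overall strategy --- cut along $P_1$, find an arc $Q$ with the ends of $P_2$ in the cut-open surface $\Sg_1$, and invoke Lemma~\ref{topology1} to conclude $Q \sim P_2$ --- is exactly the intended route. However, there is a genuine gap in the middle step. You assert that $\Sg_1$ always contains a \e{non-separating} arc between the images of the ends of $P_2$, and your verification that $P_2'$ is non-separating in $\Sg$ leans on this. That existence claim is false in the low-genus cases. Concretely: if $\Sg$ is a cylinder and $P_1$, $P_2$ are both spanning arcs, then $\Sg_1$ is a disk and \e{every} arc in it is separating; if $\Sg$ is a one-holed torus and the ends $p,q$ of $P_1$ and $x,y$ of $P_2$ appear on the boundary in the order $p,x,y,q$ (not interleaved), then $\Sg_1$ is a cylinder with $x$ and $y$ on the \e{same} hole, and in a planar surface every arc with both ends on one boundary component separates. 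In general the bad case is exactly $g(\Sg_1)=0$ with the two ends landing on a common hole of $\Sg_1$, and it does occur. (A smaller slip: with the paper's convention $g=2a+b$, cutting an orientable surface along a non-separating arc with both ends on one hole drops $g$ by $2$, not $1$.)

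The lemma is still true in these cases, and the repair is to drop the requirement that $Q$ be non-separating in $\Sg_1$. The correct criterion is: an arc $Q$ in $\Sg_1$ from $x$ to $y$ is non-separating when viewed in $\Sg$ if and only if either $\Sg_1 - Q$ is connected, or $\Sg_1 - Q$ has two components and the two copies of the $P_1$-scar lie in different components (regluing then reconnects them). In the cylinder example any arc of the disk from $x$ to $y$ works, because the two scar copies are separated by $\{x,y\}$ in the cyclic order of $\bd(\Sg_1)$; in the one-holed torus example one chooses $Q$ to run around the cylinder $\Sg_1$ so that the disk it cuts off contains one scar copy and the component containing the other hole contains the other. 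A short case analysis on the cyclic positions of $x$, $y$, and the scar copies on $\bd(\Sg_1)$ shows such a $Q$ always exists, after which your appeal to Lemma~\ref{topology1} finishes the proof. Your preliminary normalization (ends of $P_2$ disjoint from $P_1$) is appropriate and matches the implicit hypothesis under which the lemma is applied in the paper.
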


Now that the topological
prerequisites are in place, we proceed to prove Theorem~\ref{bound}.  We first consider the special case that $|\mathcal{M}|=1$.  Theorem~\ref{bound} will then 
follow by induction.

\begin{theorem} \label{specialbound}
Let $\Sg$ be a surface and let $P$ be a non-separating $\bd(\Sg)$-path in $\Sg$.  For any  linkage $\mathcal{L}$ in $\Sg$ whose ends are disjoint from $P$, there is a $\bd$-homeomorphism $\phi: \Sg \to \Sg$ such that each path of $\phi(\mathcal{L})$ intersects $P$ at most twice.   
\end{theorem}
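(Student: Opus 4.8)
The plan is to cut the surface $\Sigma$ open along $P$ and argue that, after applying a suitable $\bd$-homeomorphism, each path of $\mathcal{L}$ meets $P$ in at most two points. Since $P$ is a non-separating $\bd(\Sigma)$-path, cutting $\Sigma$ along a small tubular neighbourhood of $P$ produces a connected surface $\Sigma'$ with one extra hole $\delta'$, where $\delta'$ is the boundary created by the two sides of $P$ together with the two arcs of $\bd(\Sigma)$ near the ends of $P$. A $\bd$-homeomorphism of $\Sigma$ that fixes $P$ pointwise corresponds to a $\bd$-homeomorphism of $\Sigma'$ (fixing the old holes and acting on $\delta'$), so it suffices to understand how the paths of $\mathcal{L}$ sit in $\Sigma'$ relative to $\delta'$. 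A path $L \in \mathcal{L}$ gets chopped by $P$ into a sequence of arcs in $\Sigma'$, each either a $\bd(\Sigma')$-path with both ends on $\delta'$ (an ``inner'' arc of $L$, crossing $P$ twice), or a $\bd(\Sigma')$-path with one end on an old hole and one on $\delta'$ (the two ``outer'' arcs of $L$, crossing $P$ once each), or — in the degenerate case — the whole of $L$ if it misses $P$. The number $|L \cap P|$ is then the number of inner arcs plus (at most) $2$, so I must show I can reroute $L$ to have no inner arcs.

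The key step is a minimality/exchange argument on the inner arcs. Fix $L$ and suppose it has an inner arc $A$, a $\bd(\Sigma')$-path with both ends on $\delta'$. I would like to homotope $A$ off of $\delta'$, i.e. push it across $P$ and out the other side, reducing $|L \cap P|$. This is possible precisely when $A$ is ``inessential'' with respect to $\delta'$ in an appropriate sense. Here is where the classification lemmas do the work: since $\Sigma' $ is connected and was obtained by cutting a non-separating curve, one can check (via the genus/orientability bookkeeping in the pseudotype discussion and Lemmas~\ref{topology1}–\ref{topology4}) that the two occurrences of $\delta'$ that $A$ meets can be joined by an arc of $\delta'$ avoiding $P$'s image, so that $A$ together with this boundary arc bounds on one side a region we can use to guide an isotopy sweeping $A$ across. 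Concretely, I would take a path of $\mathcal{L}$ together with the given curve $P$ and consider an innermost inner arc — one cutting off a disk or a ``simple'' piece from $\Sigma'$ — and use a $\bd$-homeomorphism supported in a neighbourhood of that piece to eliminate it. Iterating over all inner arcs of all $L \in \mathcal{L}$ (using disjointness of the $L_i$ so the supports can be chosen disjoint, or applying the homeomorphisms in sequence) removes every inner arc, leaving each path with at most its two outer arcs, hence at most two intersections with $P$.

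The main obstacle is making the ``innermost inner arc can be swept across $P$'' step precise and genuinely valid: a priori an inner arc might be topologically essential in $\Sigma'$ (it might itself be non-separating, or separate off positive genus), in which case no naive isotopy pushes it off $\delta'$. The resolution should be that essentiality in $\Sigma'$ costs genus, and there is only a bounded budget; more carefully, among all inner arcs of a fixed $L$, consecutive ones cobound subarcs of $L$ and subarcs of $\delta'$, and an innermost such bigon-like region must be a disk (otherwise we could have chosen a more inner one or the surface would need more topology than it has), so that innermost arc is genuinely inessential and can be pushed across. I expect the bulk of the real work to be in verifying this innermost-region-is-a-disk claim and in checking the $\bd$-homeomorphism one builds from it truly fixes all of $\bd(\Sigma)$ and can be composed over all paths; the global structure — cut along $P$, count arcs, eliminate inner arcs one at a time — is straightforward, and the bound ``at most twice'' then just records the two outer arcs.
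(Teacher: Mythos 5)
The central step of your argument --- eliminating ``inner arcs'' by finding an innermost one that cobounds a disk with an arc of $\delta'$ and isotoping it across --- is a bigon-removal argument, and it only ever produces isotopies supported in disks, hence never changes the homotopy class of any $L\in\mathcal L$. What bigon removal buys you is the minimal intersection number of $L$ with $P$ \emph{within the homotopy class of $L$}, and that number is unbounded: a single path $L$ can wind $n$ times around a handle transverse to $P$, so that it has $n$ inner arcs, all essential and all homotopic to one another, with no bigon anywhere. Your proposed rescue (``essentiality costs genus, and there is only a bounded budget'') bounds the number of pairwise non-homotopic essential arcs, not the total number of essential arcs, so it does not apply. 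The hypothesis that $\phi$ ranges over all $\bd$-homeomorphisms --- i.e.\ that only the \emph{type} of each path is preserved, not its homotopy class --- is essential: to unwind the spiraling example you need something like a Dehn twist, which is a $\bd$-homeomorphism not realizable by your disk-supported isotopies. This is exactly why the paper's proof works with types and pseudotypes: after a sequence of reductions to the case where $P$ and every $L\in\mathcal L$ are non-separating one-hole paths on a non-orientable surface, it explicitly constructs a new path $P'$ of the \emph{same type} as $P$ (via Lemmas~\ref{topology1}--\ref{topology3} and the crosscap- and handle-enclosing curves) meeting each $L_k$ at most twice, and concludes by the homeomorphism carrying $P$ to $P'$.

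A second, smaller gap: even granting $\#(P,L)\le 2$ for each path individually, the theorem requires a \emph{single} $\bd$-homeomorphism working for all paths of $\mathcal L$ simultaneously; realizing the minimum for one $L_i$ can a priori obstruct realizing it for another, since the $L_i$ are disjoint and compete for the same topology (e.g.\ in the pseudotype $(1,\to)$ case they must all pass through the same crosscap). Your ``apply the homeomorphisms in sequence'' does not address this, whereas securing this simultaneity is precisely the job of the paper's final four claims, with their careful bookkeeping of the cyclic order of the points $x_1,\dots,x_n,x_1',\dots,x_n'$ along $C_0$.
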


\begin{proof}
We define an \e{$(\mathcal{L}, P)$-shift} to be a $\bd$-homeomorphism $\phi: \Sg \to \Sg$ such that each path of $\phi(\mathcal{L})$ intersects $P$ at most twice.   
Let $(\Sg, P, \mathcal{L})$ be a counterexample with $(g(\Sg), \h(\Sg), |\mathcal{L}|)$ lexicographically minimal.  

We proceed by establishing a chain of claims.  To begin, even though we only care about the theorem when $P$ is non-separating, for inductive purposes it is helpful to note that it holds in the following special case when $P$ is separating.  

\begin{claim} \label{diskhole}
If $P$ is contractible, then there is an $(\mathcal{L}, P)$-shift.  
\end{claim}

\begin{proof}[Subproof]
There is an isotopy $\phi:\Sigma\to\Sigma$ (fixing each point of $\bd(\Sigma)$) that moves $P$ sufficiently close to $\bd(\Sigma)$ so that each $L\in \mathcal L$ meets $\phi(P)$ only near an end of $L$.  Therefore, $|\phi(P)\cap L|\leq 2$.  In this case, $\phi^{-1}$ is an $(\mathcal{L}, P)$-shift.
\end{proof}

Similarly, we have the following.

\begin{claim} \label{ncontract}
No $L \in \mathcal{L}$ is contractible.
\end{claim}

\begin{proof}[Subproof]
Suppose $\mathcal{L}$ contains a contractible path.  Since the paths in $\mathcal{L}$ are disjoint, there must be a path $L \in \mathcal{L}$ such that one component of $\Sg \text{\LeftScissors} L$ is an open disk which is disjoint from $\mathcal{L}$. Consider $\mathcal{L} - L$ in $\Sg$.  By minimality, there exists an $(\mathcal{L}-L, P)$-shift $\phi$.  If $\phi(L)$ also meets $P$ at most twice we are done.  
Next observe that $\Sigma \text{\LeftScissors} \phi(L)$ has a component $\phi(\Delta)$ such that $\phi(\Delta)$ is an open disk disjoint from $\phi(\mathcal{L})$.  Thus, we 
may apply an isotopy $\alpha: \Sigma \to \Sigma$ to shift $L$ near $\bd(\Sg)$ so that $|\phi(L') \cap P|=|\alpha \phi (L') \cap P|$ for all $L' \in \mathcal{L} -L$ and $|\alpha \phi (L) \cap P| \leq 2$.  
\end{proof}

%

\begin{claim} \label{one}
For all $L \in \mathcal{L}, \#(P, L) \neq 0$. 
\end{claim}

\begin{proof}[Subproof]
Towards a contradiction, assume that $\#(P,L')=0$ for some $L' \in \mathcal{L}$. Let $\phi: \Sg \to \Sg$ be a $\bd$-homeomorphism such that $\phi(L')$ is disjoint from $P$.  Let $\Sg'$ be the component of $\Sg \text{\LeftScissors} \phi(L')$ that contains $P$ (possibly $\Sg'=\Sg \text{\LeftScissors} \phi(L')$).  Consider the linkage $\mathcal{L}':=\phi(\mathcal{L} - L) \cap \Sg'$.  Since $(\Sg, P, \mathcal{L})$ is a minimal counterexample, there exists an $(\mathcal{L}', P)$-shift $\alpha: \Sg' \to \Sg'$. Consider the map $\beta: \Sigma \to \Sigma$ defined by $\beta(x):=\alpha \phi (x)$ if $x \in \phi^{-1}(\Sg')$ and $\beta(x):=\phi(x)$ otherwise.  By construction, $\beta$ is an $(\mathcal{L}, P)$-shift, which is a contradiction. 
\end{proof}

\begin{claim} \label{two}
If $L \in \mathcal{L}$ is separating, then $\#(P,L)=2$.  
\end{claim}

\begin{proof}
Let $L' \in \mathcal{L}$ be a separating curve and let $\Sg_1$ and $\Sg_2$ be the two components of $\Sg \text{\LeftScissors} L'$.  
By the previous claim, we know that $\#(P,L') \neq 0$.  Towards a contradiction, suppose that $\#(P,L')=1$.   By Lemma~\ref{topology1}, Lemma~\ref{topology2} or Lemma~\ref{topology3}, we may choose a curve $P'$ of the same type as $P$ such that $|P' \cap L'|=1$ and for $i\in \{1,2\}$, $P' \cap \Sg_i$ is either non-separating or contractible in $\Sg_i$.  

 Let $\phi: \Sg \to \Sg$ be a $\bd$-homeomorphism such that $\phi(P)=P'$.  Note that $\phi^{-1}(L')$ only intersects $P$ once. Let $\Sg_1'$ and $\Sg_2'$ be the two components of $\Sg \text{\LeftScissors} \phi^{-1}(L')$.  By Claim~\ref{diskhole} and induction, there are $\bd$-homeomorphisms $\alpha_i: \Sg_i' \to \Sg_i'$ such that each path of $\alpha_i (\phi^{-1} (\mathcal{L}) \cap  \Sg_i')$ meets $P \cap \Sg_i'$ at most twice in $\Sg_i'$.  Thus, by combining $\alpha_1 \phi^{-1}$ and $
 \alpha_2 \phi^{-1}$ appropriately, we obtain an $(\mathcal{L}, P)$-shift.
\end{proof}

\begin{claim} \label{Pbyitself}
No path in $\mathcal{L}$ intersects any hole that $P$ intersects.  
\end{claim}

\begin{proof}[Subproof]
Suppose not and let $\delta$ be a hole such that both $P$ and $\mathcal{L}$ meet $\delta$.  There must exist a path $L' \in \mathcal{L}$ such that one end  $l$ of $L'$ and one end $p$ of $P'$ are \e{consecutive along $\delta$}. That is, there is a component of $\delta - \{l,p\}$ that is disjoint from $\mathcal{L} \cup P$.  Note that $\#(P, L')=1$ if $L'$ is non-separating, and $\#(P,L')=2$ if $L'$ is separating.  We will handle both possibilities simultaneously. 

Let $\phi: \Sg \to \Sg$ be a $\bd$-homeomorphism such that $|\phi(L') \cap P| = \#(P, L')$. Let $\Sg_1$ and $\Sg_2$ be the components of $\Sg \text{\LeftScissors} \phi(L')$ (we allow $\Sg_2 = \emptyset$, in case $L'$ is non-separating).  Consider $P \cap \Sg_1$ and $P \cap \Sg_2$.  Relabelling $\Sg_1$ and $\Sg_2$ if necessary, we may assume that $P \cap \Sg_1$ consists of two disjoint subpaths $P_1$ and $P_1'$ of $P$ and $P  \cap \Sg_2$ is a single (possibly empty) subpath $P_2$ of $P$.  Since $l$ and $p$ are consecutive along $\delta$ we may also assume that one component $\D$ of $\Sg_1 -P_{1}'$ is a disk which is disjoint from $\phi(\mathcal{L})$. 

As neither $(\Sg_1, P_{1}, \phi(\mathcal{L} \cap \Sg_1))$ nor  $(\Sg_2, P_{2}, \phi(\mathcal{L} \cap \Sg_2))$ are counterexamples, there exist $\bd$-homeomorphisms $\alpha_i: \Sg_i \to \Sg_i$ such that each path of $\alpha_i (\phi(\mathcal{L} \cap \Sg_i))$ meets $P_{i}$ at most twice.  Note that it is possible that $\alpha_1 (\phi(\mathcal{L} \cap \Sg_1))$ intersects $P_{1}'$.  However, as $\D$ is disjoint from $\phi(\mathcal{L})$, there is an isotopy $\gamma: \Sg_1 \to \Sg_1$ such that $\gamma\alpha_1 (\phi(\mathcal{L} \cap \Sg_1))$ does not meet $P_1'$ and  $|\gamma(L) \cap P_1|=|L \cap P_1|$ for all paths $L \in  \alpha_1 (\phi(\mathcal{L} \cap \Sg_1))$. If we now define $\beta: \Sg \to \Sg$ by 
$$
\beta(x)=
\begin{cases}
\gamma \alpha_1 \phi (x), & \text{ if $x \in \phi^{-1}(\Sg_1)$} \\
\alpha_2\phi (x), & \text{ if $x \in \phi^{-1}(\Sigma_2)$} \\
\phi(x),  & \text{ otherwise}
\end{cases}
$$

we contradict that $(\Sg, P, \mathcal{L})$ is a counterexample.  
\end{proof}

%

\begin{claim} \label{nonseparating}
Each $L \in \mathcal{L}$ is non-separating.
\end{claim}

\begin{proof}
Suppose that $L \in \mathcal{L}$ is separating.  By Claim~\ref{two}, $\#(P,L)=2$.  In particular, this implies that both ends of $P$ are on the same hole $\delta$.  
 Let $\phi: \Sg \to \Sg$ be a $\bd$-homeomorphism such that $|\phi(L) \cap P|=2$.  Let $\Sg_1$ and $\Sg_2$ be the two components of $\Sg \text{\LeftScissors} \phi(L)$.  
We may assume that $\Sg_1 \cap P$ consists of two disjoint subpaths $P_1$ and $P_1'$ of $P$ and $\Sg_2 \cap P$ is a single subpath $P_2$ of $P$. 

By Claim~\ref{Pbyitself}, $\delta$ is disjoint from $\mathcal{L}$.  Therefore, by Lemma~\ref{topology4}, we may assume that $P_1$ and $P_1'$ connect different holes of $\Sg_1$ and that $P_1$ and $P_1'$ are homotopic in $\Sg_1$.

As neither $(\Sg_1, P_{1}, \phi(\mathcal{L} \cap \Sg_1))$ nor  $(\Sg_2, P_{2}, \phi(\mathcal{L} \cap \Sg_2))$ are counterexamples, there exist $\bd$-homeomorphisms $\alpha_i: \Sg_i \to \Sg_i$ such that each path of $\alpha_1 (\phi(\mathcal{L} \cap \Sg_i)$ meets $P_{i}$ at most twice.  If $\alpha_1 (\phi(\mathcal{L} \cap \Sg_1))$ intersects $P_{1}'$ at most twice, then we are done by combining $\alpha_i$ and $\phi$ appropriately.  Otherwise, since $P_1$ and $P_1'$ are homotopic in $\Sg_1$ and $\mathcal{L}$ is disjoint from $\delta$, there is a component $\D$ of $\Sg_1 - (P_1 \cup P_1')$ that is an open disk disjoint from $\phi(\mathcal{L})$. Therefore, we are done by applying an appropriate isotopy of $\Sg_1$.
\end{proof}

\begin{claim} \label{nonorientable}
$\Sg$ is non-orientable.
\end{claim}

\begin{proof}[Subproof]
Arbitrarily choose $L \in \mathcal{L}$.  By the previous claim, $L$ is non-separating.  If $\Sg$ is orientable, then $\#(P,L)=0$, by Lemma~\ref{orientable0}.  This contradicts Claim~\ref{one}.  
\end{proof}

\begin{claim} \label{samehole}
No member of $\mathcal{L} \cup \{P\}$ has endpoints on distinct holes of $\Sg$.
\end{claim}

\begin{proof}[Subproof]
Arbitrarily choose $L \in \mathcal{L}$.  By Claim~\ref{nonseparating} and Claim~\ref{Pbyitself}, $L$ is non-separating and neither end of $L$ is on the same hole as an end of $P$.  Therefore, if $L$ or $P$ has endpoints on distinct holes, then $\#(P,L)=0$, a contradiction. 
\end{proof}

We finish the proof by ruling out all four possibilities for the pseudotype of $P$.  Let $\mathcal{L}:=\{L_1, \dots, L_n\}$, let $p_1$ and $p_2$ be the ends of $P$, and let $\delta_P$ be the hole which contains $\{p_1, p_2\}$.  By Claim~\ref{samehole}, each $L_i$ is also a $\delta_i$-path for some hole $\delta_i$.   Also, by Claim~\ref{Pbyitself}, $\delta_i \neq \delta_P$ for any $i$.

\begin{claim}
$P$ is not of pseudotype $(2, \not\to)$.
\end{claim}

\begin{proof}[Subproof]
Suppose $P$ is of pseudotype $(2, \not\to)$. This implies that $\Sg \cong \Sg(0,i,j)$ for some $i \geq 3$. Let $C$ be a separating curve such that one component $\Sg_1$ of $\Sg \text{\LeftScissors}C$ is homemorphic to $\Sg(1,0,2)$ and $P \subseteq \Sg_1$.  Let $\Sg_2$ be the other component of $\Sg\text{\LeftScissors}C$.  Note that $\Sg_2 \cong \Sg(0,i-2, j)$.   We choose an arbitrary $L \in \mathcal{L}$ and show in every case that we get the contradiction $\#(P,L) = 0$.   

If $L$ has pseudotype $(1, \to)$ or $(1, \not\to)$, then there is a path of the same type as $L$ contained in $\Sg_2$, and hence disjoint from $P$.
If $L$ has pseudotype $(2,\to)$, then $i$ is even and at least 4, so again there is a path of the same type as $L$ contained in $\Sg_2$.   If $L$ is of pseudotype $(2, \not \to)$, then there is a path of the same type as $L$ disjoint from $P$ that meets $C$ exactly twice.  
\end{proof}

\begin{claim} \label{1to}
$P$ is not of pseudotype $(1, \to)$.
\end{claim}

\begin{proof}[Subproof]
If $P$ is of pseudotype $(1,\to)$, then $\Sigma \cong \Sigma(i,1,j)$, for some $i,j$.
  Consider an arbitrary $L \in \mathcal{L}$.  Since $g(\Sigma) $ is odd, $L$ is not of type $(2,\rightarrow)$.   Observe that $L$ cannot be of type $(2,\not\rightarrow)$, as otherwise $g(\Sigma)\ge 3$ and $\#(P,L)=0$.  Hence, each $L_k$ is of pseudotype $(1, \to)$ or $(1, \not\to)$. 

Let $C_0, C_1, \dots, C_i$ be disjoint closed curves in $\Sg$ such that $C_0$ is a crosscap-enclosing curve and $C_1, \dots, C_i$ are pairwise non-homotopic handle-enclosing curves.  Since each path in $\mathcal{L}$ is of pseudotype $(1, \to)$ or $(1, \not\to)$, each path in $\mathcal{L}$ must intersect $C_0$.  By applying an appropriate isotopy, we may assume that each $L_k$ intersects $C_0$ exactly twice.  Thus, we may label the points of $C_0 \cap \mathcal{L}$ as $x_1, x_1', \dots, x_n, x_n'$, where $x_k$ and $x_k'$ are the ends of $L_k$, and the clockwise order of $C_0 \cap \mathcal{L}$ along $C_0$ is $x_1, \dots, x_n, x_1', \dots, x_n'$.  

Since each $L_k$ is non-separating, there is a path $Q$ in $\Sg$ from $p_1$ to a point $z \in C_0$ that avoids $L_1 \cup \dots \cup L_n \cup C_0 \cup C_1 \cup \dots \cup C_i$ (other than the point $z$).  Let $\Sg_0$ be the crosscap enclosed by $C_0$.  By relabelling if necessary, we may assume that there is a point $z' \in C_0$ such that the clockwise order of $\{z, z', x_1, \dots, x_n, x_1', \dots, x_n'\}$ along $C_0$ is
$z, x_1, \dots, x_n, z', x_1', \dots, x_n'$.  
Thus, there is a path $R$ in $\Sg_0$ such that $R \cap C_0:=\{z,z'\}$ and $R$ is disjoint from $\mathcal{L}$.

We now define a path $P'$ with the same ends as $P$
as follows. 

\begin{itemize}
\item
Start at $p_1$ and follow $Q$ until reaching $z$. 
\item
Follow $R$ until reaching $z'$. 
\item
Follow $C_0$ clockwise until returning sufficiently close to $z$.  
\item
Stay sufficiently close to $Q$ until returning sufficiently close to $p_1$.
\item
Stay sufficiently close to $\delta_P$ until returning to $p_2$.
\end{itemize}

Since $\delta_P$ does not meet any $L_k$, we may choose $P'$ so that $P'$ meets each $L_k$ exactly once.  Moreover, we may also assume that $P'$ does not meet $C_1 \cup \dots \cup C_i$.  Therefore, by construction, $P'$ is of pseudotype $(1, \to)$.  By Lemma~\ref{topology2}, $P'$ is of the same type as $P$, so we are done.  
\end{proof}
 
\begin{claim}
$P$ is not of pseudotype $(1, \not\to)$.
\end{claim}

\begin{proof}[Subproof]
Suppose not and consider an arbitrary $L \in \mathcal{L}$.  Observe that $\#(P,L)=0$, unless $L$ is of pseudotype $(1, \to)$ or $(2, \to)$.  Therefore, each path in $\mathcal{L}$
is of pseudotype $(1, \to)$ if $g(\Sg)$ is odd, or each path in $\mathcal{L}$ is of pseudotype $(2, \to)$ if $g(\Sg)$ is even.  

We handle the former possibility first.  In this case $\Sg$ is homeomorphic to $\Sg(0, 2i+1, j)$ for some $i,j$.  Let $C_0, C_1, \dots, C_{2i}$ be pairwise disjoint non-homotopic crosscap-enclosing curves in $\Sg$.  Since each path in $\mathcal{L}$ is of pseudotype $(1, \to)$, each path in $\mathcal{L}$ must intersect $C_0$.  By applying an appropriate isotopy, we may assume that each $L_k$ intersects $C_0$ exactly twice.  Now as in the proof of Claim~\ref{1to}, we can construct a path of the same type as $P$ which meets each curve in $\mathcal{L}$ exactly once.

The remaining case is if each $L \in \mathcal{L}$ is of pseudotype $(2, \to)$, which implies that $\Sg \cong \Sg(i,2,j)$ for some $i,j$.  
Let $C_0, C_1, \dots, C_i$ be disjoint closed curves in $\Sg$ such that $C_0$ is a twisted handle-enclosing curve and $C_1, \dots, C_i$ are pairwise non-homotopic handle-enclosing curves.  Observe that each path in $\mathcal{L}$ must intersect $C_0$.  By applying an appropriate isotopy, we may assume that each $L_k$ intersects $C_0$ exactly twice.  Thus, we may label the points of $C_0 \cap \mathcal{L}$ as $x_1, x_1', \dots, x_{n}, x_{n}'$, where $x_k$ and $x_k'$ are the ends of $L_k$, and the clockwise order of $C_0 \cap \mathcal{L}$ along $C_0$ is $x_1, \dots, x_{n}, x_n', \dots, x_{1}'$.  Let $y$ and $y'$ be points of $C_0$ such that the clockwise order
of $\{y, y'\} \cup (C_0 \cap \mathcal{L})$ along $C_0$ is $x_1, \dots, x_{n}, y, x_n', \dots, x_{1}', y'$.  

In this case, we start at $p_1$ until we get nearly to $C_0$ at some point $z$; follow along $C_0$ to $y$ or $y'$, go through the twisted handle, then back alongside $C_0$ to near $z$, and finish as in Claim~\ref{1to}.
 \end{proof}

\begin{claim}
$P$ is not of pseudotype $(2, \to)$.
\end{claim}

\begin{proof}[Subproof]
Suppose not and note $\Sg \cong \Sg(i,2,j)$ for some $i \geq 0$.  Observe that $L$ cannot be of pseudotype $(2, \not\to)$ or $(2, \to)$, otherwise $\#(P,L)=0$.  Therefore, each $L_k$ is of pseudotype $(1, \not\to)$. 

Let $C_0, C_1, \dots, C_i$ be disjoint closed curves in $\Sg$ such that $C_0$ is a twisted handle-enclosing curve and $C_1, \dots, C_i$ are pairwise non-homotopic handle-enclosing curves.  Since each path in $\mathcal{L}$ is of pseudotype $(1, \not\to)$, $\mathcal{L}$ must intersect $C_0$.  By applying an appropriate isotopy, we may assume that each $L_k$ intersects $C_0$ exactly twice.  Note that some paths of $\mathcal{L}$ go through one of the crosscaps enclosed by $C_0$, and the rest must go through the other crosscap enclosed by $C_0$.  Thus, we may label the points of $C_0 \cap \mathcal{L}$ as $x_1, x_1', \dots, x_{n_1}, x_{n_1}', y_1, y_1' \dots, y_{n_2}, y_{n_2}'$, where $x_k$ and $x_k'$ are the ends of $L_k$, $y_k$ and $y_k'$ are the ends of $L_{n_1+k}$, $n_1+n_2=n$, and the clockwise order of $C_0 \cap \mathcal{L}$ along $C_0$ is 
\[x_1, \dots, x_{n_1}, x_1', \dots, x_{n_1}', y_1, \dots, y_{n_2},   
y_1', \dots, y_{n_2}'.
\]  

Again there is a path from $p_1$ to a point $z \in C_0$ that avoids $\mathcal{L} \cup C_1 \cup \dots \cup C_i$.  By symmetry we may assume that $z$ is on the clockwise segment of $C_0$ from $x_1$ to $x_{n_1}'$.   Now let $w$ and $w'$ be points of $C_0$ such that the clockwise order  of $\{w,w'\} \cup (\mathcal{L} \cap C_0)$  along $C_0$ is  
\[x_1, \dots, x_{n_1}, x_1', \dots, x_{n_1}', w, y_1, \dots, y_{n_2},  w', 
y_1', \dots, y_{n_2}'.
\]  
In this case, we start at $p_1$ until we get nearly to $C_0$ at $z$; go through one of the crosscaps enclosed by $C_0$, then alongside $C_0$ to $w$ or $w'$, then through the other crosscap enclosed by $C_0$, then back alongside $C_0$ until returning to near $z$, and finish as in Claim~\ref{1to}.
\end{proof}

This completes the entire proof.
\end{proof}

A simple induction yields Theorem~\ref{bound}, which is the form we will use later.

\begin{reptheorem}{bound}
Let $\Sg$ be a surface and let $\mathcal{L}$ and $\mathcal{M}$ be linkages in $\Sg$ of sizes $k$ and $n$ respectively.  
If $\mathcal{L} \cap \mathcal{M} \cap \bd(\Sg)=\emptyset$ and $\Sg - \mathcal{M}$ is connected, then there is a $\bd$-homeomorphism $\phi: \Sg \to \Sg$ such that $|\phi(\mathcal{L}) \cap \mathcal{M}| \leq k (3^n-1)$.
\end{reptheorem}

\begin{proof}
We proceed by induction on $n$.  The case $n=1$ follows by the previous theorem.  Let $P \in \mathcal{M}$.  By the previous theorem, there is a $\bd$-homeomorphism $\phi_1: \Sg \to \Sg$ such that each path of $\phi_1(\mathcal{L})$ intersects $P$ at most twice. Let $\Sigma'$  and $\mathcal{L}'$ be the surface and linkage obtained from $\Sigma$ and $\phi_1(\mathcal{L})$ by cutting out a small tubular neighbourhood of $P$. 
Thus, $\mathcal{L}'$ is a linkage in $\Sigma'$ of size at most $3k$.  By induction, there is a $\bd$-homeomorphism $\phi_2: \Sg' \to \Sg'$ such that $|\phi_2(\mathcal{L}') \cap (\mathcal{M} \setminus \{P\})| \leq (3k) (3^{n-1}-1)$. Thus, there is a $\bd$-homeomorphism $\phi: \Sg \to \Sg$ such that $|\phi(\mathcal{L}) \cap \mathcal{M}| \leq (3k) (3^{n-1}-1)+2k=k (3^n-1)$.
\end{proof} 

We conjecture that Theorem \ref{bound} holds without the assumption that $\Sg - \mathcal{M}$ is connected.  

\begin{conjecture} \label{separatingconjecture}
Let $\Sg$ be a surface and let $\mathcal{L}$ and $\mathcal{M}$ be linkages in $\Sg$ of sizes $k$ and $n$ respectively.  
If $\mathcal{L} \cap \mathcal{M} \cap \bd(\Sg)=\emptyset$, then there is a $\bd$-homeomorphism $\phi: \Sg \to \Sg$ such that $|\phi(\mathcal{L}) \cap \mathcal{M}| \leq k (3^n-1)$.
\end{conjecture}

We end the section by connecting Theorem~\ref{bound} to a constant $w(\Sigma,k,n)$ that appears in Graph Minors VII \cite{gm7}.  A \emph{near-linkage} in a surface $\Sg$ is a collection of internally disjoint $\bd(\Sg)$-paths.  If $\Sg$ is a cylinder, then a small subset of the proofs of Theorems \ref{specialbound} and \ref{bound} yields the following. 

\begin{theorem} \label{nearcylinder}
Let $\mathcal{L}$ be a near-linkage of size $k$ and $\mathcal{M}$ be a linkage of size $n$ in a cylinder $\Sg$.  Then there is a $\bd$-homeomorphism $\phi: \Sg \to \Sg$ such that $|\phi(\mathcal{L}) \cap \mathcal{M}| \leq k (3^n-1)$. 
\end{theorem}

Note that in the cylinder, we do not require the hypotheses $\mathcal{L} \cap \mathcal{M} \cap \bd(\Sg)=\emptyset$ nor $\Sg-\mathcal{M}$ connected. The latter follows easily since every separating $\bd$-path $P$ in the cylinder is contractible, and we know that Theorem \ref{specialbound} holds if $P$ is contractible.  We now extend Theorem \ref{bound} to the case that $\mathcal{L}$ is a near-linkage.

\begin{theorem}
\label{nearlinkage}
Let $\Sg$ be a surface, $\mathcal{L}$ be a near-linkage of size $k$ in $\Sg$, and $\mathcal{M}$ be a linkage of size $n$ in $\Sg$.
If $\Sg - \mathcal{M}$ is connected, then there is a $\bd$-homeomorphism $\phi: \Sg \to \Sg$ such that $|\phi(\mathcal{L}) \cap \mathcal{M}| \leq k3^{2n+1}$.
\end{theorem}

\begin{proof}
Let $\Sg', \mathcal{L}'$, and $\mathcal{M}'$ be obtained from $\Sg, \mathcal{L}$, and $\mathcal{M}$ by cutting a slightly larger hole $\delta_i'$ around each hole $\delta_i$ of $\Sg$.  We may assume that each $P \in \mathcal{L} \cup \mathcal{M}$ meets each $\delta_i'$ at most twice and that $\mathcal{L} \cap \mathcal{M} \cap \delta_i'=\emptyset$.  By Theorem \ref{bound}, there is a $\bd$-homeomorphism $\phi': \Sg' \to \Sg'$ such that $|\phi'(\mathcal{L}') \cap \mathcal{M}'| \leq k (3^n-1)$. 

For each hole $\delta_i$ we let $\Sg_i$ be the cylinder between $\delta_i'$ and $\delta_i$.  Let $\mathcal{L}_i=\mathcal{L} \cap \Sigma_i$ and $\mathcal{M}_i=\mathcal{M} \cap \Sg_i$.  Note that $\bigcup_i |\mathcal{L}_i| \leq 2k$ and $\bigcup_i |\mathcal{M}_i| \leq 2n$.  By applying Theorem \ref{nearcylinder} to each $\mathcal{L}_i$ and $\mathcal{M}_i$ in $\Sg_i$, and then extending $\phi'$ accordingly, there is a $\bd$-homeomorphism $\phi: \Sg \to \Sg$ such that $|\phi(\mathcal{L}) \cap \mathcal{M}| \leq k (3^n-1) + 2k(3^{2n}-1) \leq k3^{2n+1}$.
\end{proof}

We now further extend Theorem~\ref{bound} to bound the intersection number between a forest and a linkage.
Let $F_1$ and $F_2$ be two forests embedded in $\Sg$.  Robertson and Seymour \cite{gm7} define $F_1$ and $F_2$ to be \e{homotopic} if 

\begin{itemize}
\item
$V(F_1) \cap \bd(\Sg)=V(F_2) \cap \bd(\Sg)$,
\item
for all $s,t \in V(F_1) \cap \bd(\Sg)$, there is a path from $s$ to $t$ in $F_1$ if and only if there is a path from $s$ to $t$ in $F_2$, and
\item
for all $s, t \in V(F_1) \cap \bd(\Sg)$, the $s$-$t$ path in $F_1$ (if it exists) is homotopic to the $s$-$t$ path in $F_2$ (if it exists).  
\end{itemize}

Two forests $F_1$ and $F_2$ are \e{homoplastic} if there is a $\bd$-homeomorphism $\phi$ such that $\phi(F_1)$ is homotopic to $F_2$.

\begin{theorem}
\label{actualexplicitness}
For all $k, n \in \mathbb{N}$ and all surfaces $\Sg$, if $F$ is a forest
in $\Sg$ with $|V(F) \cap \bd(\Sg)| \leq k$, $\mathcal{M}$ is an  $n$-linkage in $\Sg$, and $\Sg -\mathcal{M}$ is connected, then there is a forest $F'$ in $\Sg$ such that $F'$ is homoplastic to $F$ and $|F' \cap \mathcal{M}| \leq 4k(3^{2n+1})$.
\end{theorem}

\begin{proof}
Let $(\Sg, \mathcal{M}, F)$ be a counterexample with $|V(F)|$ minimum.  Since $|V(F)|$ is minimum, all degree 2
vertices of $F$ must be on $\bd(\Sg)$.  Next suppose there is an edge $xy \in E(F)$ such that $x$ has degree $1$ in $F$, and $x \notin \bd(\Sg)$.  Note that contracting $e$ produces a smaller counterexample.  Thus, all leaf vertices of $F$ are on $\bd(\Sg)$.  Let $V_{\geq 3}$ be the vertices of $F$ of degree at least 3, $V_1$ be the leaves of $F$,  and $X$ be the vertices of $F$ not contained on $\bd(\Sg)$.  Since $X \sub V_{\geq 3}$ and all leaves of $F$ are on $\bd(\Sg)$ we have

\[
\sum_{v \in X} d_F(v) \leq \sum_{v \in V_{\geq 3}} d_F(v) < 3|V_1| \leq 3|V(F) \cap \bd(\Sg)|,
\]
where the second to last inequality follows since a forest has average degree less than 2.  

By applying an isotopy we may assume that $\mathcal{M}$ is disjoint from $X$.  For each $x \in X$, let $\D_x$ be a small open disk such that $\D_x$ is disjoint from $\mathcal{M}$.  Let $\Sg':=\Sg - \bigcup_{x \in X} \D_x$.  We transform $F$ into a near-linkage $\mathcal{L}(F)$ on $\Sg'$ as follows.  For each $x \in X$, we split $x$ into $d_F(x)$ copies on $\D_x$ according to the clockwise order of the edges around $x$ in $F$.  Let $\mathcal{M}'$ be the image of $\mathcal{M}$ in $\Sg'$.  Now apply Corollary~\ref{nearlinkage} to $\mathcal{L}(F)$ and $\mathcal{M}'$ in $\Sg'$.  Since $\sum_{v \in X} d_F(v) \leq 3|V(F) \cap \bd(\Sg)|$, it follows that $|V(\mathcal{L}(F))| \leq 4|V(F) \cap \bd(\Sg)|$.  Therefore, there is a $\bd$-homeomorphism $\phi': \Sg' \to \Sg'$ such that $|\phi'(\mathcal{L}(F)) \cap \mathcal{M}'| \leq 4k(3^{2n+1})$.  By  gluing back each $\D_x$ and then contracting each $\D_x$ to a point, we obtain a forest $F'$ in $\Sg$ such that $|F' \cap \mathcal{M}| \leq 4k(3^{2n+1})$ and $F'$ is homoplastic to $F$.
\end{proof}

Theorem \ref{actualexplicitness} is essentially a computable version of \cite[(3.6)]{gm7}, with an explicit value for the constant $w(\Sg,k,n)$.  Unfortunately, we have the additional hypothesis that $\Sg -\mathcal{M}$ is connected.  In the last paragraph
of~\cite{gm13}, it is stated, without proof, that $w(\Sigma,k,n)$ from \cite{gm7} is computable.  Note that the bound $4k(3^{2n+1})$ in Theorem \ref{actualexplicitness} is independent of $\Sigma$.  We conjecture we should also be able to take $w(\Sigma,k,n)=4k(3^{2n+1})$, which would follow from Conjecture \ref{separatingconjecture}.
However, it is important to point out that our proof of Theorem~\ref{protect} does \e{not} rely on the fact that $w(\Sg, k,n)$ is computable.  We will derive Theorem~\ref{protect} from 
Theorem~\ref{bound}.

\section{Linkages on a Cylinder}
The purpose of this section is to establish two lemmas regarding linkages on a cylinder.  Both these lemmas will be used in the proof of Theorem~\ref{protect}.  

It is convenient for us to describe our first lemma in terms of independence in a certain matroid, which we now define.  In general, if $V_1$ and $V_2$ are sets of vertices in a graph $G$, then, for each $A \sub V_1$, the maximum number of disjoint $A$-$V_2$ paths in $G$ is the rank function of a matroid on $V_1$.  We denote the rank function of this matroid as 
$\kappa_{V_1, V_2}$.  

  We will later apply Edmonds' Matroid Intersection Theorem~\cite{edmonds} to two copies of this matroid.  No other knowledge of matroid theory is required, but the interested reader may refer to Oxley~\cite{oxley}.  

Our first lemma is a technical assertion about when we can route  paths across a cylinder given the presence of many other paths.  

\begin{lemma} \label{buffer}
Let $G$ be a graph embedded on a cylinder $\Sg$ with holes $\delta_1$ and $\delta_2$. Let $V_1:=V(G) \cap \delta_1$, $V_2:=V(G) \cap \delta_2$, and  $M$ be the matroid on $V_1$ 
with rank function $\kappa_{V_1, V_2}$.  Let $A_1, B_1, A_2, B_2, \dots, A_n, B_n$ be a cyclically contiguous partition of $V_1$.
If for all $i \in [n]$, $A_i$ is $M$-independent and $r_M(B_i) \geq 2 \sum_{j=1}^n |A_j|$, then $ \bigcup_{j=1}^n A_j$ is $M$-independent. 
\end{lemma}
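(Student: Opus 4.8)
I would argue by contradiction, using Menger's theorem together with the planar topology of the cylinder. Write $A:=\bigcup_{j=1}^n A_j$ and $a:=\sum_{j=1}^n|A_j|=|A|$, and recall that, since $M=\kappa_{V_1,V_2}$, the set $A$ is $M$-independent if and only if $G$ has $a$ pairwise disjoint $A$-$V_2$ paths. Independence of each $A_i$ is already needed for the statement to hold at all: a superset of a dependent set is dependent, so if some $A_i$ were dependent then so would be $A$.

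First I would suppose that $A$ is dependent and apply Menger's theorem to obtain a vertex set $X\sub V(G)$ with $|X|=r_M(A)\le a-1$ meeting every $A$-$V_2$ path. Let $U$ be the union of the vertex sets of the components of $G-X$ that contain a vertex of $A\bs X$; then $A\bs X\sub U$ and $U\cap V_2=\emptyset$. The buffers now enter: for each $i$, since $r_M(B_i)\ge 2a$ there are at least $2a$ pairwise disjoint $B_i$-$V_2$ paths, so at least $2a-|X|\ge a+1$ of them avoid $X$; each of these lies in a single component of $G-X$ which, meeting $V_2$, is not a component of $U$, so it starts at a vertex of $B_i$ lying outside $U$ and still reaching $V_2$ in $G-X$. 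Thus each $B_i$ contains at least $a+1$ vertices on the ``far side'' of $X$, while all of $A\bs X$ sits inside $U$.

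Next I would pass to topology. Choose a thin closed regular neighbourhood $\overline U$ of $U$ in $\Sg$ containing no vertex or edge of $G$ outside $U$ and meeting the finitely many $B_i$-$V_2$ paths above only along $U$. Then $\overline U$ misses $\delta_2$; one has $A\bs X\sub U\sub\overline U$; and for each $i$ at least $a+1$ vertices of $B_i$ lie in the component $W$ of $\Sg\bs\overline U$ containing $\delta_2$ (they avoid $\overline U$ and reach $\delta_2$ inside $\Sg\bs\overline U$). Reading the cyclic sequence of arcs $A_1,B_1,\dots,A_n,B_n$ around $\delta_1$: every vertex of $A\bs X$ lies in one of the ``feet'' $\overline U\cap\delta_1$, whereas for each of the $n$ arcs $B_i$ at least $a+1>|X|$ of its vertices lie in $W\cap\delta_1$, hence avoid every foot and every bounded component of $\Sg\bs\overline U$. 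The final step is to show this is impossible when $|X|\le a-1$: each foot of $\overline U$, and each arch of $\overline U$ joining two feet, must be ``paid for'' either by a vertex of $X$ on $\delta_1$ separating a foot vertex from an adjacent non-foot vertex, or by an interior vertex of $X$ around which $\partial\overline U$ must wrap; combining this with the bookkeeping identity $|A\cap X|=a-|A\bs X|$, with $|A_i|=r_M(A_i)\le r_M(A)=|X|$ for every $i$, and with the fact that each of the $n$ buffers pushes more than $|X|$ vertices into $W$, one is forced to conclude $|X|\ge a$, a contradiction.

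The Menger reduction and the verification that the surviving $B_i$-paths land beyond $X$ are routine; the substantive difficulty — and the only place where both hypotheses and the cyclic arc structure on $\delta_1$ are genuinely used — is the closing topological count: showing that $\overline U$ cannot at once hide all of $A\bs X$ in its feet and leave more than $|X|$ vertices of every $B_i$ exposed to $\delta_2$ unless $|X|\ge a$. The governing intuition is that a ``cheap'' separator lying mostly on $\delta_1$ would force $\overline U$ to arch over some $B_i$, trapping $B_i$ in a disk disjoint from $\delta_2$ and contradicting $r_M(B_i)\ge 2a>0$; turning this picture into a clean inequality is the part I expect to require the most care.
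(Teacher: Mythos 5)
Your Menger reduction and the observation that each $B_i$ keeps at least $2a-|X|\geq a+1$ vertices that reach $\delta_2$ in $G-X$ outside $U$ are both fine, but the proof is not complete: the closing topological count, which you yourself identify as ``the substantive difficulty,'' is only gestured at, and the gesture does not work as stated. Two concrete problems. First, the claim that each foot or arch of $\overline U$ must be ``paid for'' by a vertex of $X$ on $\delta_1$ or by an interior vertex of $X$ that $\partial\overline U$ ``wraps around'' is unjustified: $\overline U$ is a neighbourhood of $G[U]$, its frontier runs alongside vertices and edges of $U$, not of $X$, and $X$ may lie entirely in the interior of the cylinder, so frontier arcs of $\overline U$ impose no cost on $X$ by themselves. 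Second, and more seriously, the natural way to extract $X$-vertices --- each of the $|A_i|$ disjoint $A_i$--$V_2$ paths must first leave $U$ through a vertex of $N(U)\cap X$, giving $|X|\geq |A_i|$ for each $i$ --- does not sum: a single interior vertex of $X$ can be adjacent to components of $U$ serving different $A_i$'s, so the exit vertices for different indices can coincide. Your listed ingredients ($|A_i|\le|X|$ for each $i$, $|A\cap X|=a-|A\setminus X|$, and each $B_i$ pushing more than $|X|$ vertices into $W$) do not combine in any evident way to yield $|X|\geq a$, which is the whole content of the lemma. The gap is repairable within your framework, but it needs a further idea you have not supplied: for each $i$ take one of the $B_i$--$V_2$ paths that avoids $X$ (and hence avoids $U$), arrange these $n$ barriers to be pairwise disjoint, and cut the cylinder along them into $n$ sectors; planarity then forces $A_i$, its components of $U$, and all the associated exit vertices of $X$ into the $i$-th sector, and only then do the bounds $|X\cap R_i|\geq|A_i|$ add up to $|X|\geq a$. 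Even here the pairwise disjointness of the barriers requires an uncrossing argument, since they come from different path systems.

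For comparison, the paper avoids contradiction and separators entirely: it argues constructively, taking the $|A_i|$ disjoint $A_i$--$V_2$ paths for each $i$ together with a family of $2\sum_j|A_j|$ disjoint $B$--$V_2$ paths distributed so that $B_i$ carries $|A_i|+|A_{i+1}|$ of them, and then iteratively reroutes each family $\mathcal{A}_i$ (replacing a path $P$ by the product $PQ$ with a buffer path $Q$) until $\mathcal{A}_i$ is confined between two designated buffer paths; the confinement makes the union disjoint. The two routes are genuinely different --- yours would give structural information about small separators, the paper's gives an explicit rerouting --- but as submitted your argument stops exactly where the lemma's content begins.
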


\begin{proof}
 By hypothesis, for each $i \in[n]$, there exists a collection $\mathcal{A}_i$ of $|A_i|$ disjoint $A_i$-$V_{2}$ paths.  If the paths in $\mathcal{A}:= \bigcup_{i=1}^n \mathcal{A}_i$ are disjoint, we are done.  Otherwise, let $B:=\bigcup_{i=1}^n B_i$.  Since $r_M(B_i) \geq 2 \sum_{j=1}^n |A_j|$ for each $i$, by iteratively augmenting $M$-independent sets, there exists a collection $\mathcal{B}$ of disjoint $B$-$V_{2}$ paths such that 
\begin{itemize}
\item
$|\mathcal{B}|=2 \sum_{i=1}^n |A_i|$, and

\item
For each $i$, $\mathcal{B}$ contains exactly $|A_i|+|A_{i+1}|$ paths with an endpoint in $B_i$ (indices are read modulo $n$). 
\end{itemize}
The idea is to use the paths in $\mathcal{B}$ to reroute the paths in $\mathcal{A}$.  Let $\mathcal{B}_i$ be the paths in $\mathcal{B}$ with an endpoint in $B_i$ and let $m_i:=|A_i|$.  

Label the paths of $\mathcal{A}_1$ as $P_1, \dots, P_{m_1}$ clockwise.  Label the paths of $\mathcal{B}_{1}$ as $R_1, \dots,  R_{m_1+m_2}$ clockwise. Label the paths of $\mathcal{B}_{n}$ as $Q_1, \dots, Q_{m_n+m_1}$ counter-clockwise.   For walks $P$ and $Q$ that intersect, the \e{product} of $P$ with $Q$ is the walk $PQ:=PxQ$, where $x$ is the first vertex of $P$ also in $Q$.  By convention, if $P$ and $Q$ are disjoint $A$-$V_{2}$ paths, the \e{region between $P$ and $Q$} is the (closed) clockwise region in $\Sg$ from $P$ to $Q$. 

We will reroute the paths in $\mathcal{A}_1$ so that they are between $Q_{m_1}$ and $R_{m_1}$. 
Suppose that some path of $\mathcal{A}_1$ is not between $Q_{m_1}$ and $R_{m_1}$.  The crux of the proof is the following claim.

\begin{claim}
Either 
\begin{itemize}
\item
$P_1 \cap Q_{m_1} \neq \emptyset$ and $P_1 Q_{m_1} \cap R_{m_1}=\emptyset$, or
\item
$P_{m_1} \cap R_{m_1} \neq \emptyset$ and $P_{m_1} R_{m_1} \cap Q_{m_1}=\emptyset$.
\end{itemize}
\end{claim}
 
\begin{proof}[Subproof]
If $P_1$ and $P_{m_1}$ are both between $Q_{m_1}$ and $R_{m_1}$, then by planarity, all paths of $\mathcal{A}_1$ would also be, which is a contradiction.  So certainly, $P_1$ or $P_{m_1}$ must intersect $Q_{m_1}$ or $R_{m_1}$.  By symmetry let us assume $P_1$ intersects $Q_{m_1}$ or $R_{m_1}$.  Suppose $P_1$ intersects $Q_{m_1}$.  Then we are done unless 
\[
P_1 Q_{m_1} \cap R_{m_1} \neq \emptyset.
\]
However, this implies that $P_1$ also intersects $R_{m_1}$, and that in fact $P_1$ intersects $R_{m_1}$ \e{before} $Q_{m_1}$.  It follows that $P_{m_1} \cap R_{m_1} \neq \emptyset$ and $P_{m_1} R_{m_1} \cap Q_{m_1}=\emptyset$, as required.   

The remaining case is that $P_1$ intersects $R_{m_1}$, but not $Q_{m_1}$.  Again we have $P_{m_1} \cap R_{m_1} \neq \emptyset$ and $P_{m_1} R_{m_1} \cap Q_{m_1}=\emptyset$. 
\end{proof}

So all paths of $\mathcal{A}_1$ are indeed between $Q_{m_1}$ and $R_{m_1}$ unless
\begin{itemize}
\item
$P_1 \cap Q_{m_1} \neq \emptyset$ and $P_1 Q_{m_1} \cap R_{m_1}=\emptyset$, or
\item
$P_{m_1} \cap R_{m_1} \neq \emptyset$ and $P_{m_1} R_{m_1} \cap Q_{m_1}=\emptyset$.  
\end{itemize}
By symmetry, we may assume $P_1 \cap Q_{m_1} \neq \emptyset$ and $P_1 Q_{m_1} \cap R_{m_1}=\emptyset$.  We replace $P_1$ by $P_1 Q_{m_1}$.  Now, if $P_2, \dots, P_{m_1}$ are all between $Q_{m_1 -1}$ and $R_{m_1}$ then we are done.  Otherwise, by the above claim
\begin{itemize}
\item
 $P_2 \cap Q_{m_1 -1} \neq \emptyset$ and $P_2 Q_{m_1 -1} \cap R_{m_1}=\emptyset$, or
\item
$P_{m_1} \cap R_{m_1} \neq \emptyset$ and $P_{m_1} R_{m_1} \cap Q_{m_1 -1}=\emptyset$. 
\end{itemize}
In the former, we replace $P_2$ by $P_2 Q_{m_1 -1}$.  
In the latter, we replace $P_{m_1}$ by $P_{m_1} R_{m_1}$.  
Note that in both cases the rerouted path is disjoint from $P_1 Q_{m_1}$.  Therefore, we can continue re-routing inductively, until all paths in $\mathcal{A}_1$ are between $Q_{m_1}$ and $R_{m_1}$.  

By repeating the above argument, for each $i \in [n]$ we obtain a family $\mathcal{A}_i'$ of disjoint $A_i$-$V_{2}$ paths such that, for all $i$,
\begin{itemize}
\item
$|\mathcal{A}_i'|=|A_i|$, and
\item
The paths in $\mathcal{A}_i'$ intersect at most $|A_i|$ paths of $\mathcal{B}_i$ and at most $|A_i|$ paths of $\mathcal{B}_{i-1}$.   
\end{itemize}
It immediately follows that the family $\mathcal{A}':=\bigcup_{i=1}^n \mathcal{A}_i'$ is disjoint, since $|\mathcal{B}_i| \geq |A_i|+|A_{i+1}|$ for each $i$.
\end{proof}

We end this section by proving a lemma for linkages in cylindrical grids.  
Let $C_m$ be a cycle of length $m$ and $P_n$ be a path with $n$ vertices.  The $(m,n)$-\e{cylindrical grid} is the Cartesian product $C_m  \, \square \, P_{n}$.  The two cycles of length $m$ in $C_m  \, \square \,  P_{n}$ that pass through only degree 3 vertices are called the \e{boundary cycles}.  

Suppose that the vertices of a pattern $\Pi$ are contained in a cyclically ordered set (such as a cycle in a graph). We say that $\Pi$ is \e{cross-free}, if there do not exist distinct $a,b,c,d \in V(\Pi)$ such that $\{a,b\}, \{c,d\} \in \Pi$ and the cyclic ordering of $\{a,b,c,d\}$ is $a,c,b,d$ or $a,d,b,c$.  Note that a pattern on a disk is topologically feasible if and only if it is cross-free.  

Our second lemma gives sufficient conditions for finding linkages in cylindrical grids. 

\begin{lemma} \label{cgrid}
Let $G$ be a $(m,n)$-cylindrical grid and let $\Pi$ be a pattern of size $k$ with $V(\Pi)$ contained in a boundary cycle of $G$.  If $\Pi$ is cross-free and $n \geq k$, then $\Pi$ is realizable in $G$.
\end{lemma}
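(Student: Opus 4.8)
The plan is to route the pairs of $\Pi$ one at a time, peeling off an ``outermost'' pair at each step and pushing it into a single concentric layer of the grid, while recursing on a smaller cylindrical grid for the remaining pairs. Write the vertex layers of $G=C_m\,\square\,P_n$ as $D_1,\dots,D_n$, where $D_1$ and $D_n$ are the boundary cycles and $V(\Pi)\sub D_1$; each $D_\ell$ is a cycle of length $m$, and $D_\ell$ is joined to $D_{\ell+1}$ by a perfect matching. The key structural observation is that, since $\Pi$ is cross-free as a pattern on the cyclically ordered set $D_1$, there is a pair $\{s,t\}\in\Pi$ that is ``innermost'' in the nesting order: the two arcs of $D_1$ determined by $s$ and $t$ have the property that one of them, say the arc $I$ from $s$ to $t$, contains no vertex of $V(\Pi)\bs\{s,t\}$. (A singleton set $\{s\}=\{t\}$ is the trivial case and uses no layer.) This is exactly the standard fact that a cross-free — i.e.\ non-crossing — pattern on a cycle admits a nested/laminar structure.

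First I would handle such an innermost pair $\{s,t\}$ by building its path $L$ using only the vertices of the arc $I$ in layer $D_1$ together with the single layer $D_2$ directly below: go from $s$ down the matching edge into $D_2$, travel around $D_2$ following the arc parallel to $I$ until reaching the vertex above $t$, then come back up into $D_1$ at $t$. Actually, to leave the remaining pairs enough room I would instead route $L$ to ``hug'' the boundary: use the sub-arc $I$ of $D_1$ itself wherever possible and only dip into $D_2$ to pass the endpoints $s$ and $t$; since $I\bs\{s,t\}$ contains no other vertex of $V(\Pi)$, the path $L$ can be taken entirely inside $D_1\cup D_2$ and disjoint from every other vertex of $V(\Pi)$. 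Now delete $V(L)$ and observe that what remains contains a $(m,n-1)$-cylindrical grid on layers $D_2,\dots,D_n$ minus the (at most) handful of $D_2$-vertices used by $L$; more convenient is to note that the remaining pattern $\Pi':=\Pi\bs\{\{s,t\}\}$ of size $k-1$ is still cross-free, its vertices still lie on the boundary cycle $D_1$, and the full grid $C_m\,\square\,P_{n-1}$ on $D_1,D_3,\dots,D_n$ (reconnecting $D_1$ to $D_3$) survives disjoint from $L$ — provided $L$ was pushed into layer $D_2$ only.

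So the induction is on $k$: the base case $k=0$ is vacuous, and for the inductive step we use one layer for the innermost pair and invoke the inductive hypothesis with parameters $(k-1, n-1)$, whose hypothesis $n-1\ge k-1$ is exactly our hypothesis $n\ge k$. The main obstacle — and the step I would be most careful with — is making the bookkeeping precise: I must ensure that routing $L$ through $D_2$ genuinely frees up a clean $(m,n-1)$-cylindrical grid (with $V(\Pi')$ on its boundary) disjoint from $L$, rather than merely ``a subgraph of it''. The honest way to do this is to route $L$ so that it uses \emph{exactly} the vertices of the arc $I\cap D_1$, their down-neighbours in $D_2$, and the corresponding arc of $D_2$ — and then to check that $D_2\bs V(L)$ together with $D_3,\dots,D_n$ still contains the required grid after we collapse the used part of $D_2$ back onto $D_1$. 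Equivalently, and more cleanly, one reroutes so that $L$ occupies a whole concentric annulus between $D_1$ and $D_2$ restricted to the arc, and then the complement is a cylindrical grid on $n-1$ layers whose boundary is (a relabelling of) $D_1$ carrying $\Pi'$. Once this layer-peeling is set up, the disjointness of the paths produced across the recursion is immediate because distinct pairs are assigned to distinct layers by the nesting order, and within a layer the non-crossing condition keeps them apart.
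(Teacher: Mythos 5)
Your overall strategy coincides with the paper's: induct on $k$, use cross-freeness to extract an ``innermost'' pair $\{s,t\}$ whose boundary arc $I$ contains no other vertex of $V(\Pi)$, spend one concentric layer on that pair, and recurse with parameters $(k-1,n-1)$, so that $n\ge k$ is exactly what the induction needs. The problem is the bookkeeping step that you yourself flag as delicate, and as written it does not go through. If you route $L$ through the second layer $D_2$ (under the arc $I$), then the claim that ``the full grid $C_m\,\square\,P_{n-1}$ on $D_1,D_3,\dots,D_n$ (reconnecting $D_1$ to $D_3$) survives disjoint from $L$'' is false: reconnecting a vertex of $D_1$ to $D_3$ must pass through its unique $D_2$-neighbour, and for every vertex in the interior of $I$ that neighbour lies on $L$. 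The final ``annulus'' formulation has the same defect: after removing the arc of $D_2$ under $I$, the second layer is a path rather than a cycle, so the complement is not a cylindrical grid on $n-1$ layers whose boundary carries $\Pi'$; at best one gets a smaller cylindrical grid minor after further deletions, which is precisely the statement that needs to be proved, not assumed.

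The clean repair --- and it is what the paper does --- is not to touch $D_2$ at all: take the path for $\{s,t\}$ to be the arc $I$ of the boundary cycle itself, delete $s$ and $t$, and contract every remaining vertex of $V(\Pi)$ one step into the cylinder along its matching edge. The recursive instance is then an honest $C_{m-2}\,\square\,P_{n-1}$ minor (the cycle length shrinks, which is harmless since the lemma places no lower bound on $m$ beyond carrying the terminals) with $V(\Pi)\setminus\{s,t\}$ on a boundary cycle, and the lifted paths meet $D_1$ only at their endpoints, hence are automatically disjoint from $I$. Also, your parenthetical that a singleton ``uses no layer'' is unjustified as stated: the other paths must still avoid $s$, and the inductive way to arrange this is again to delete $s$ and contract the remaining terminals one step inward, which does consume a layer --- this is fine, since singletons are counted in $k$ and the budget is $n\ge k$.
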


\begin{proof}
If $\Pi$ contains a singleton $\{s\}$, then we can delete $s$ from $G$ and contract the remaining vertices of $\Pi$ one step into the cylinder.  The resulting graph has a $C_{m-1} \, \square \,  P_{n-1}$ minor with $V(\Pi) - \{s\}$ still contained in one of the boundary cycles.  By induction on $k$, we are done.  

Otherwise, since $\Pi$ is cross-free,  we can find an element $\{s, t \} \in \Pi$ and an $s$-$t$ path $P$ of a boundary cycle such that no internal vertex of $P$ is in $V(\Pi)$.  We delete the ends of $P$ and contract the other vertices of $\Pi$ one step into the cylinder.  The resulting graph has a $C_{m-2}  \, \square \,  P_{n-1}$ minor with $V(\Pi) - \{s,t\}$ still contained in one of the boundary cycles.  By induction, we can realize $\Pi - \{\{s,t\}\}$ in $G - \{s,t\}$, and hence we can realize $\Pi$ in $G$.  
\end{proof}

\section{Redundant Vertices on Surfaces} \label{main}
In this section we prove Theorem~\ref{protect}.  Let $G$ be a graph embedded in a surface $\Sg$ and let $\Pi$ be a pattern in $G$.  Recall that a vertex $v \in V(G)$ is \e{$t$-protected in $\Sg$} (with respect to $\Pi$) if

\begin{itemize}

\item
there are $t$ vertex disjoint cycles $C_1, \dots, C_t$ of $G$, bounding discs $\D_1, \dots, \D_t$ in $\Sigma$ with $v \in \D_1 \subset \D_2 \subset \dots \subset \D_t$, and

\item
$V(\Pi)$ is disjoint from $\mathsf{int}(\D_{t})$.

\end{itemize}
We refer to $C_1, \dots, C_t$ as the \e{cycles protecting $v$}.

To apply induction, it turns out to be useful to work with a special kind of surface. To this end, we introduce a `disk with strips'.

 A \e{strip} $S$ is a homeomorph of $[0,1] \times [0,10]$.  The:
 \begin{itemize}
 \item
  \e{ends} of $S$ are the images of $[0,1] \times \{0\}$  and $[0,1] \times \{10\}$;
  
  \item
  \e{equator} of $S$ is the image of $[0,1] \times \{5\}$;
  
  \item
  \e{corners} of $S$ are images of $(0,0), (0,10), (1,0)$, and $(1,10)$.
 \end{itemize}

A \e{disk with $n$ strips} is a surface $\Omega:=\D \cup S_1 \cup \dots \cup S_n$, where $\D$ is a disk and for all distinct $i,j \in [n]$,
\begin{itemize}
\item
$S_i$ is a strip.

\item
$S_i \cap \D$ is the union of the ends of $S_i$.

\item
$S_i$ and $S_j$ are disjoint, except possibly at corners.
\end{itemize}

For example, up to homeomorphism, the only disks with 1 strip are the cylinder and the M\"obius band.
If $\Omega=\D \cup S_1 \cup \dots \cup S_n$ is a disk with $n$ strips, then we say $S_1, \dots, S_n$ are the \e{strips} of $\Omega$ and that $\D(\Omega):=\D$ is the \e{disk} of $\Omega$.  

Let $\Omega$ be a disk with strips, $G$ be a graph embedded in $\Omega$, and $\Pi$ be a pattern in $G$.  We say that a vertex $v \in V(G)$ is \e{$t$-insulated in $\Omega$} (with respect to $\Pi$) if:
\begin{itemize}
\item
there are $t$ vertex disjoint cycles $C_1, \dots, C_t$ of $G$, bounding discs $\D_1, \dots, \D_t$ in $\D(\Omega)$ with $v \in \D_1 \subset \D_2 \subset \dots \subset \D_t=\D(\Omega)$;

\item
$V(\Pi)$ is disjoint from $\mathsf{int}(\D_{t})$; and

\item
each $C_i$ is an induced subgraph of $G \cap \D(\Omega)$.
\end{itemize}

In particular, if we regard $\Omega$ as a surface, then a $t$-insulated vertex is a $t$-protected vertex, but not necessarily vice versa.  

We prove Theorem~\ref{protect} as a corollary of the following theorem.

\begin{theorem} \label{insulate}
For all $k, n \in \N$, there exists a computable constant $\theta:=\theta(k,n) \in \N$ such that if $G$ is a graph embedded in a disk with $n$ strips $\Omega$, $\Pi$ is a pattern in $G$ of size $k$, $v \in V(G)$ is a $\theta(k,n)$-insulated vertex in $\D(\Omega)$ with respect to $\Pi$, and $V(\Pi) \sub \bd(\Omega) \cap \D(\Omega)$, then $v$ is redundant.
\end{theorem}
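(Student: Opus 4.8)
The plan is to prove the non-trivial implication of redundancy: if $G$ has a $\Pi$-linkage then so does $G-v$ (the converse holds since $G-v\sub G$). So fix a $\Pi$-linkage $\mathcal{L}$ of $G$, and let $C_1,\dots,C_\theta$ be the cycles witnessing that $v$ is $\theta$-insulated, bounding discs $\D_1\sub\dots\sub\D_\theta=\D(\Omega)$ with $v\in\D_1$. Since the $C_i$ are pairwise vertex-disjoint, $\D_1\sub\mathsf{int}(\D_2)$, so it suffices to produce a $\Pi$-linkage of $G$ disjoint from $\mathsf{int}(\D_2)$: such a linkage automatically avoids $v$. The value $\theta(k,n)$ will be whatever the estimates below force, and it will be computable because each of those estimates is.

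First I would use Theorem~\ref{bound} to dispose of the strips. Let $\mathcal{M}$ be the near-linkage consisting of the $n$ equators of $S_1,\dots,S_n$. Each equator is a $\bd(\Omega)$-path, $\mathcal{L}$ meets $\bd(\Omega)$ only at its ends (which lie in $V(\Pi)$, hence off every equator), and $\Omega-\mathcal{M}$, being the disk $\D(\Omega)$ with $2n$ half-strips still attached, is connected. Hence Theorem~\ref{bound} supplies a $\bd$-homeomorphism of $\Omega$ — which changes neither the isomorphism type of $G$, nor the property that $v$ is $\theta$-insulated, nor the conclusion — after which $\mathcal{L}$ meets $\bigcup\mathcal{M}$ at most $k3^n$ times. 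Cutting $\Omega$ open along $\mathcal{M}$ then yields a contractible surface $\Omega^+$ (a topological disk with half-strip appendages glued along arcs of its boundary), in which $\mathcal{L}$ has become a near-linkage $\mathcal{L}^+$ with at most $K:=k+k3^n$ components, each with both ends on $\bd(\Omega^+)$ and outside $\mathsf{int}(\D_\theta)$, while $C_1,\dots,C_\theta$ survive, nested around $v$. Since the appendages lie near $\bd(\Omega^+)$, away from the cycles, we may treat $\Omega^+$ as a disk: the task is now to reroute a near-linkage of size at most $K$ in a disk out of $\mathsf{int}(\D_2)$, past the nested cycles $C_2,\dots,C_\theta$.

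The heart of the matter is this planar rerouting, which I would organize around a connectivity dichotomy. For $2\le i\le\theta-1$ let $\overline{A_i}$ be the closed annulus between $C_i$ and $C_{i+1}$ and $\lambda_i$ the maximum number of pairwise disjoint $C_i$--$C_{i+1}$ paths in $G\cap\overline{A_i}$. In the main case, $\lambda_i$ is large for every such $i$ (a computable bound in $K$ suffices; say $\lambda_i\ge 2K$): then $C_2,\dots,C_\theta$, together with a family of $2K$ disjoint $C_i$--$C_{i+1}$ paths in each $\overline{A_i}$, contain a cylindrical-grid minor of $G\cap(\overline{\D_\theta}\setminus\mathsf{int}(\D_2))$ of width $2K$ and depth $\theta-1$ whose outer boundary cycle comes from $C_\theta$. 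The portion of $\mathcal{L}^+$ inside $\overline{\D_\theta}$ is a family of at most $K$ disjoint paths with all ends on $C_\theta$, hence a cross-free pattern of size at most $K$ on that boundary cycle; since $\theta-1\ge K$, Lemma~\ref{cgrid} realizes it in the grid, and so by disjoint paths of $G$ lying in $\overline{\D_\theta}\setminus\mathsf{int}(\D_2)$. Splicing these in for the old $\overline{\D_\theta}$-portion of $\mathcal{L}^+$ and then undoing the cut along $\mathcal{M}$ produces a $\Pi$-linkage of $G$ disjoint from $\mathsf{int}(\D_2)$, as required. In the remaining case, $\lambda_i<2K$ for some $i$. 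If $\lambda_i=0$ then no component of $\mathcal{L}^+$ can cross $\overline{A_i}$ into $\mathsf{int}(\D_i)\supseteq\mathsf{int}(\D_2)$, and we are already done; otherwise, by Menger's theorem there is a vertex set $Z_i$ with $|Z_i|=\lambda_i<2K$ separating $C_i$ from $C_{i+1}$ in $\overline{A_i}$, so at most $\lambda_i$ of the (disjoint) components of $\mathcal{L}^+$ meet $Z_i$ and hence at most $\lambda_i$ of them reach $\mathsf{int}(\D_i)$. The other components already avoid $\mathsf{int}(\D_2)$, so it remains only to reroute these at most $\lambda_i$ ``deep'' components out of $\mathsf{int}(\D_2)$; their traces in $\overline{\D_i}$ form a cross-free pattern of size at most $2\lambda_i<4K$ on $C_i$, and I would reroute them around $v$ using Lemma~\ref{buffer}, with the long arcs of the many remaining nested cycles serving as the buffers, after deleting the other components of $\mathcal{L}^+$ — a strictly smaller instance to which induction applies.

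I expect the main obstacle to be exactly the bookkeeping in this last case: making the reduction precise (which smaller instance, with what parameters), controlling how the pattern size grows as the induction descends (it at most multiplies by a constant at each bottleneck, so one must balance the number of descents against the supply of nested cycles consumed), and choosing buffer sizes so that the hypothesis $r_M(B_i)\ge 2\sum_j|A_j|$ of Lemma~\ref{buffer} is met — and then solving the resulting recursion to get a closed, computable formula for $\theta(k,n)$. Two smaller points also need care: the routine but slightly delicate passage between topological and graph-theoretic linkages when Theorem~\ref{bound} is applied, and the verification that, when every $\lambda_i$ is large, the nested cycles and transversals genuinely yield the cylindrical-grid minor used in the main case.
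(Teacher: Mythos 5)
Your overall strategy (perturb $\mathcal{L}$ off the equators with Theorem~\ref{bound}, then do a radial connectivity dichotomy across the nested cycles: either every annulus is highly connected and you build a cylindrical grid, or some annulus has a small cutset and few components go deep) is genuinely different from the paper's, which works with a minimal counterexample, proves structural facts about $\mathcal{L}$ (no ``hills'', no ``nibbles'', the lower bound $\omega(x_i)\ge\min\{i,p-i+1\}$ on how deep each path must go), and whose case split is governed not by radial connectivity but by matroid intersection between the \emph{two ends of a strip}; that is what drives the induction on $n$. As written, your plan has two genuine gaps.

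First, in your main case, Lemma~\ref{cgrid} requires $V(\Pi')$ to lie on the boundary cycle of the cylindrical grid, which for a grid \emph{minor} means each pattern vertex must be the root of one of the pairwise disjoint vertical paths. The existence of $2K$ disjoint $C_i$--$C_{i+1}$ paths in every annulus does not give you disjoint paths starting at the \emph{specific} vertices of $V(\Pi')$ (the points of $V(\Pi)$ and the crossing points of $\mathcal{L}$ on $C_\theta$): those transversals attach to $C_\theta$ wherever they like, and you cannot in general link $V(\Pi')$ to their tops disjointly. Proving that $V(\Pi')$ is independent in the matroid $\kappa_{V(C_\theta),V(C_N)}$ is the crux of the whole theorem; the paper needs the minimal-counterexample structure (Claim~\ref{omega}) together with Lemma~\ref{buffer} applied with buffer sets coming from the strips (Claims~\ref{indroots} and~\ref{Aind}) to get it. Your proposal treats this as automatic.

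Second, the bottleneck case is not a well-founded induction. You may not ``delete the other components of $\mathcal{L}^+$'': the rerouted deep components must coexist with, and avoid, the shallow ones in the final linkage. It is also unspecified what quantity decreases: the instance has the same $k$ and $n$, and if every $\lambda_j$ is small but positive the recursion descends through all $\theta$ cycles without ever producing a grid, ending at $\D_2$ with a pattern that need not be realizable in $G\cap\overline{\D_2}-v$. Lemma~\ref{buffer} cannot serve as the rerouting device here --- it is an independence/augmentation statement on a cylinder, not a tool for diverting paths around $v$. Finally, your cutsets are all radial; nothing in your argument ever reduces the number of strips or confronts a bottleneck \emph{inside} a strip (a strip whose two ends have no large common independent set), which is the configuration the paper's Case~2 is built to handle and which your dichotomy does not detect.
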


We also require the following easy lemma which follows from Euler's Formula. See \cite[Proposition 3.6]{lctriangulations} for a proof.

\begin{lemma} \label{bouquet}
Let $\mathcal{C}$ be a family of non-contractible simple closed curves in a surface $\Sg$.  If, for all $C_1,C_2\in \mathcal C$, $C_1\cap C_2=\{b\}$
 and the curves in $\mathcal{C}$ are pairwise non-homotopic (relative to the fixed basepoint $b$), then $|\mathcal{C}| \leq 3g(\Sg)$.  
\end{lemma}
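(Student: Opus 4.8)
The plan is to recast $\mathcal C$ as a one-vertex graph embedded in $\Sg$ and to bound its number of edges by an Euler-characteristic count over the faces. First I would dispose of the small cases: the sphere carries no non-contractible simple closed curve, and the projective plane carries exactly one such curve up to homotopy, so $|\mathcal C|\le 1\le 3g(\Sg)$ in both; from now on $\Sg$ is a closed surface with $g(\Sg)\ge 2$, which is the setting in which the lemma is applied. We may assume $\mathcal C$ is finite (a finite subfamily of size exceeding $3g(\Sg)$ would already contradict the conclusion), say $|\mathcal C|=m$; if $m\le 1$ there is nothing to prove, so assume $m\ge 2$, and then every curve of $\mathcal C$ passes through $b$. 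After an isotopy supported in a small disk about $b$ we may assume the curves are in general position there; then $H:=\bigcup_{C\in\mathcal C}C$ is a graph embedded in $\Sg$ with the single vertex $b$ (of degree $2m$) and $m$ edges, each edge being one of the loops $C\in\mathcal C$. Let $\mathcal F$ be its set of faces. For $f\in\mathcal F$ write $\ell(f)$ for the length of the facial walk of $f$; since each of the $m$ loops borders two faces counted with multiplicity, $\sum_{f\in\mathcal F}\ell(f)=2m$. Finally let $\bar f$ denote the compact surface with boundary that completes the open surface $f$, so $\chi(\bar f)\le 1$, with equality exactly when $f$ is an open disk.

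The crux is the claim that \emph{every disk face $f$ satisfies $\ell(f)\ge 3$}. The facial walk $w_f$ of a disk face extends to a continuous map of the disk $\bar f$ into $\Sg$, so $w_f$ is null-homotopic, that is, $w_f=1$ in $\pi_1(\Sg,b)$. A walk of length $0$ is impossible since $H\ne\emptyset$ and $\Sg$ is connected. A walk of length $1$ is $C_i^{\pm1}$ for some $i$, so $C_i$ would be null-homotopic, contradicting non-contractibility. A walk of length $2$ using two distinct loops reads $C_i^{\pm1}C_j^{\pm1}$ with $i\ne j$, forcing $[C_i]=[C_j]^{\pm1}$ in $\pi_1(\Sg,b)$, so $C_i$ and $C_j$ would be homotopic rel $b$ for one choice of orientations, contradicting the hypothesis. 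A walk of length $2$ using a single loop cannot read $C_iC_i^{-1}$, since no facial walk of an embedded graph backtracks over an edge whose ends have degree $\ge 2$; it therefore reads $C_iC_i$, forcing $C_i^2=1$ in $\pi_1(\Sg,b)$, which is impossible because the fundamental group of a closed surface other than $S^2$ and $\mathbb{RP}^2$ is torsion-free. This proves the claim.

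Granting the claim, let $F_d$ be the number of disk faces. Each contributes at least $3$ to $\sum_f\ell(f)=2m$, so $F_d\le 2m/3$. Every disk face contributes $1$ to $\sum_f\chi(\bar f)$, while every other face, being a compact surface with boundary that is not a disk, contributes at most $0$; hence $\sum_f\chi(\bar f)\le F_d\le 2m/3$. Decomposing $\Sg$ into $H$ together with its open faces and using additivity of the compactly supported Euler characteristic,
\[
\chi(\Sg)=\chi(H)+\sum_{f\in\mathcal F}\chi(\bar f)=(1-m)+\sum_{f\in\mathcal F}\chi(\bar f),
\]
while $\chi(\Sg)=2-g(\Sg)$ since $\Sg$ is closed. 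Combining these,
\[
m=g(\Sg)-1+\sum_{f\in\mathcal F}\chi(\bar f)\ \le\ g(\Sg)-1+\frac{2m}{3},
\]
so $m\le 3\bigl(g(\Sg)-1\bigr)\le 3g(\Sg)$, as required.

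The step I expect to demand the most care is the claim that disk faces have facial walks of length at least $3$ --- in particular, pinning down the geometric content of the short facial walks, which is exactly where the exceptional surfaces $S^2$ and $\mathbb{RP}^2$ intervene, through a disk face whose boundary walk wraps a single loop twice. A secondary, more routine point is justifying the identity $\chi(\Sg)=(1-m)+\sum_f\chi(\bar f)$ for a possibly non-cellular embedding of $H$; I would phrase this via the compactly supported Euler characteristic, using that a finite-type open surface has the Euler characteristic of its compact completion, and noting that the non-disk faces impose no relations in $\pi_1$.
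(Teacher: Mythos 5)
The paper does not actually prove this lemma --- it is quoted from Malni\v{c} and Mohar without proof --- so there is no in-paper argument to compare against; I have therefore checked your proof on its own terms, and it is correct. The one-vertex graph $H$, the identity $\sum_f\ell(f)=2m$, the additivity $\chi(\Sg)=\chi(H)+\sum_f\chi(\bar f)$ via compactly supported Euler characteristic, and the bound $\chi(\bar f)\le 0$ for non-disk faces all hold. The two delicate points in your "girth $\ge 3$" claim also check out: a length-$2$ facial walk reading $C_iC_i^{-1}$ forces an immediate reversal at $b$, which occurs only at a degree-$1$ vertex, impossible since $\deg(b)=2m\ge 4$; and the reading $C_iC_i$ is excluded by torsion-freeness of $\pi_1$ of closed surfaces other than $S^2$ and $\mathbb{RP}^2$ --- a case that genuinely occurs on $\mathbb{RP}^2$, where the complement of the core curve is a disk whose boundary walk doubles the core, so your exclusion of that surface is necessary rather than cosmetic. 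Two remarks. First, as literally stated the lemma fails for surfaces with boundary (the core of an annulus is non-contractible while $3g=0$), so your restriction to closed surfaces is forced, and it is consistent with the paper's use: Theorem \ref{protect} is proved for surfaces without boundary, and the lemma is applied to the curve family $\mathcal{N}^*$ obtained after contracting the disk $\D_t$ to the base point. Second, your count actually gives the sharper bound $m\le 3(g(\Sg)-1)$ for $g(\Sg)\ge 2$, which is tight on the torus (three curves of slopes $0,1,\infty$ through a common point), and a fortiori implies the stated $3g(\Sg)$.
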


The proof of Theorem~\ref{insulate} is rather lengthy, so we defer it until the next section.  It is however, relatively straightforward to derive Theorem~\ref{protect} from Theorem~\ref{insulate}, which we now proceed to do.  

\begin{reptheorem}{protect}
For all surfaces without boundary $\Sg$ and all $k \in \N$, there exists a computable constant $t:=t(\Sg,k) \in \N$  such that if $G$ is a graph embedded in $\Sg$, $\Pi$ is a $k$-pattern in $G$, and $v \in V(G)$ is a $t$-protected vertex in $\Sg$ with respect to $\Pi$, then $v$ is redundant.
\end{reptheorem}

\begin{proof} For all surfaces without boundary $\Sg$ and all $k \in \N$, define $t(\Sg, k)$ to be $\theta(k, 4k+3g(\Sg))$, where $\theta$ is the function from Theorem~\ref{insulate}.  We will define $\theta$ explicitly in the proof of Theorem~\ref{insulate}, so $t$ is also explicit.  

Let $(G, \Sigma, \Pi, v)$ be a counterexample with $|V(G)|+|E(G)|$ minimal.  That is, $G$ is a graph embedded in a surface $\Sg$, $\Pi$ is a pattern of size $k$ in $G$, and $v \in V(G)$ is a $t$-protected ($t:=t(\Sg,k)$) vertex in $\Sg$ with respect to $\Pi$, yet $v$ is essential.  

Let $C_1, \dots,  C_t$ be cycles protecting $v$, bounding disks $\D_1 \subset \dots \subset \D_t$ in $\Sg$ such that $\sum_{i \in [t]} |V(C_i)|$ is minimum.  Let $\mathcal{L}$ be a $\Pi$-linkage in $G$, and let $H=C_1 \cup \dots \cup C_t$.  

\begin{claim} 
$V(G)=V(H)$.
\end{claim}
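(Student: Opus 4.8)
The plan is to argue by contradiction from the minimality of $(G,\Sg,\Pi,v)$: if some vertex of $G$ lies off $H$, we will build a strictly smaller counterexample by deleting a vertex or contracting an edge. Three observations do all of the work. First, since $v$ is essential, $G$ has a $\Pi$-linkage but $G-v$ does not; equivalently, there is a $\Pi$-linkage $\mathcal L$ of $G$ and \e{every} $\Pi$-linkage of $G$ uses $v$, so in particular $v\in V(\mathcal L)$. Moreover the chain $\D_1\subsetneq\dots\subsetneq\D_t$ is proper (the $C_i$ are vertex disjoint, hence pairwise disjoint in $\Sg$), so $v\in\D_1\sub\mathsf{int}(\D_t)$ and therefore $v\notin V(\Pi)$. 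Second, neither deleting a vertex nor contracting an edge disturbs the protecting data, provided no $C_i$ is touched: the $C_i$ remain vertex disjoint cycles bounding the same discs, and since an edge of $G$ cannot cross any $C_i$, the image of $v$ stays in $\D_1$ and the image of $V(\Pi)$ stays off $\mathsf{int}(\D_t)$. Third, if $G'$ is obtained from $G$ by such an operation, with $v'$ the image of $v$ and $\Pi'$ the (possibly relabelled) pattern, then $G'-v'$ is a minor of $G-v$; since a $\Pi'$-linkage of a minor lifts to a $\Pi$-linkage of the ambient graph and $G-v$ has no $\Pi$-linkage, $G'-v'$ has no $\Pi'$-linkage either. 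Hence $G'$ is a smaller counterexample as soon as we verify that $\Pi'$ has size $k$ and that $G'$ still has a $\Pi'$-linkage.

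Now assume $u\in V(G)\bs V(H)$. If $u\notin V(\mathcal L)$, then $u\notin V(\Pi)$, so $\Pi$ is still a size-$k$ pattern of $G-u$, the linkage $\mathcal L$ survives in $G-u$, and by the first paragraph $v$ is still a $t$-protected essential vertex of $G-u$ --- contradicting minimality. So $u$ lies on some path $L_i\in\mathcal L$. If $u$ is an internal vertex of $L_i$, we contract an edge $e$ of $L_i$ incident with $u$, chosen so that the image of $v$ does not become a terminal of $\Pi$ (this is possible because, if $v$ lies on $L_i$ at all, both of its neighbours on $L_i$ are internal vertices of $L_i$, since a terminal adjacent to $v$ would lie in $\D_1\sub\mathsf{int}(\D_t)$). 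Then $\mathcal L/e$ is a $\Pi$-linkage of $G/e$ with $L_i$ shortened, $\Pi$ is unchanged, and $G/e$ is a smaller counterexample, again a contradiction. Finally, if $u\in V(\Pi)$, say $u=s_i$ (so $u\ne v$), we contract the first edge $e=s_iw$ of $L_i$ and let $\Pi'$ be $\Pi$ with $s_i$ replaced by $w$. Here $w$ is an interior vertex of $L_i$ or $w=t_i$, so $w$ is not a terminal of $\Pi$ and $\Pi'$ has size $k$; also $w\notin\mathsf{int}(\D_t)$ because $s_iw$ cannot cross $C_t$, so $v$ is $t$-protected with respect to $\Pi'$; and $\mathcal L/e$ is a $\Pi'$-linkage of $G/e$. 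By the first paragraph $G/e$ is a smaller counterexample --- absurd. Hence no such $u$ exists and $V(G)=V(H)$.

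The real content, and the step I expect to be the main obstacle, is the case analysis of the contractions in the second paragraph: one has to check, whichever edge is contracted, that the image of $v$ never escapes $\D_1$, that no terminal is ever dragged into $\mathsf{int}(\D_t)$, and that the contracted family of paths is a genuine $\Pi'$-linkage rather than two paths identified at a point. In each case this reduces to the fact that edges of the embedded graph $G$ meet the cycles $C_1,\dots,C_t$ only at their own endpoints, which is exactly what forbids the bad configurations.
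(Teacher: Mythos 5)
Your proof is correct and follows essentially the same route as the paper's (which is only two sentences long): delete any vertex outside $V(\mathcal{L})\cup V(H)$, and contract an edge of $\mathcal{L}$ with an end off $V(H)$, in each case contradicting the minimality of the counterexample. Your additional verifications (that $v\in V(\mathcal{L})$ because it is essential, that contractions cannot drag $v$ out of $\D_1$ or a terminal into $\mathsf{int}(\D_t)$ since edges of the embedded graph cannot cross the cycles $C_i$) are exactly the details the paper leaves implicit.
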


\begin{proof}[Subproof]
Suppose not.  First note that $V(\mathcal{L}) \cup V(H)=V(G)$, otherwise we could delete a vertex of $G$ not in $V(\mathcal{L}) \cup V(H)$ to obtain a smaller counterexample.  Next observe that if $e=xy \in E(\mathcal{L})$ and $y \notin V(H)$, then we can contract $e$ onto $x$ to obtain a smaller counterexample.
\end{proof}

Observe that the claim implies that $V(\Pi) \sub V(C_t)$.  

\begin{claim}
Each $C_i$ is an induced subgraph of $G \cap \D_t$. 
\end{claim}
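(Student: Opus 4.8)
The plan is to show that a chord of some $C_i$ that lies in $G\cap\D_t$ would let us choose protecting cycles of smaller total size, contradicting the minimality of $\sum_{i\in[t]}|V(C_i)|$. First note $C_i\subseteq\D_i\subseteq\D_t$, so each $C_i$ is a subgraph of $G\cap\D_t$ and the statement makes sense. Suppose, for a contradiction, that $e=xy\in E(G\cap\D_t)\bs E(C_i)$ with $x,y\in V(C_i)$. We may assume $G$ is simple, so $x$ and $y$ are non-consecutive on $C_i$; thus $C_i$ is the union of two $x$-$y$ arcs $A$ and $B$, each with at least one internal vertex, and both cycles $C^A:=A\cup\{e\}$ and $C^B:=B\cup\{e\}$ are strictly shorter than $C_i$.

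Next I would locate $e$. Since $e$ is an edge of $G$, its interior meets no $C_j$, and hence (being connected) lies entirely inside or entirely outside each closed disk $\D_j$; as the ends of $e$ lie on $C_i$, which is contained in $\mathsf{int}(\D_j)$ for $j > i$ and is disjoint from $\D_j$ for $j < i$, this forces $e\cap\D_j=\emptyset$ for all $j < i$ and $e\subseteq\D_j$ for all $i < j \le t$. In particular $e$ is disjoint from $C_j$ for every $j\ne i$. There remain two cases for the position of $e$ relative to $\D_i$. In the \emph{inner case}, $e\subseteq\D_i$; then $e$ cuts $\D_i$ into two disks, one bounded by $C^A$ and one by $C^B$, and since $e$ is disjoint from $\D_{i-1}$ (when $i \ge 2$) one of them, say $\D^A$, contains $\D_{i-1}$ (when $i \ge 2$) and $v$. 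In the \emph{outer case}, $e\cap\D_i=\{x,y\}$; then $e\subseteq\D_{i+1}$ (so $i < t$) and $e$ lies in the annulus $\D_{i+1}\bs\mathsf{int}(\D_i)$ with both ends on its inner boundary circle $C_i$, so $e$ together with one arc of $C_i$, say $A$, bounds a disk $\Lambda$ in that annulus with $\Lambda\cap\D_i=A$; then $\D^B:=\D_i\cup\Lambda$ is a disk with boundary $C^B$ containing $\D_{i-1}$ (when $i \ge 2$) and $v$, and $\D^B\subseteq\mathsf{int}(\D_{i+1})$ because $\Lambda$ avoids $C_{i+1}$.

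In each case, replace the pair $(C_i,\D_i)$ by $(C^A,\D^A)$, respectively $(C^B,\D^B)$, leaving the other $C_j,\D_j$ untouched. Using that $C^A$ (resp.\ $C^B$) has vertex set contained in $V(C_i)$ and that the new disk nests as above, the result is a system of $t$ vertex-disjoint cycles of $G$ bounding nested disks $\D_1'\subseteq\dots\subseteq\D_t'$ with $v\in\D_1'$ and $\D_t'\subseteq\D_t$; in particular $V(\Pi)\cap\mathsf{int}(\D_t')=\emptyset$, so this is again a system of cycles protecting $v$. Its total size is strictly smaller than $\sum_i|V(C_i)|$, contradicting minimality. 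Hence no such chord exists; that is, each $C_i$ is induced in $G\cap\D_t$.

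The only genuinely delicate part, and the one I would take most care with, is the planar/annular bookkeeping underlying the two cases: that an arc with both ends on a boundary circle of an annulus cuts off a disk, that attaching this disk to $\D_i$ along the arc $A$ again yields a disk with boundary $C^B$, and that in both cases the rerouted cycle stays vertex-disjoint from every other $C_j$ and the new disk nests correctly inside $\D_{i+1}$ and around $\D_{i-1}$. None of this is hard, but it needs to be stated carefully.
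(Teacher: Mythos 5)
Your proof is correct and takes essentially the same approach as the paper: a chord $e$ of some protecting cycle $C_i$ lying in $G\cap\D_t$ yields a strictly shorter cycle in $C_i\cup e$ that still bounds a suitably nested disk containing $v$, contradicting the minimality of $\sum_{i\in[t]}|V(C_i)|$. You merely spell out the inner/outer case analysis and the nesting and disjointness checks that the paper's one-line argument leaves implicit.
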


\begin{proof}[Subproof]
Towards a contradiction, suppose that $e \sub \D_t$, $e \notin E(H)$, and $e$ has both of its ends on $C_j$ for some $j \in [t]$.  Note that by minimality, $G$ is simple.  So, there is a cycle $C_j' \sub C_j \cup e$ with length strictly less than $C_j$.  Replacing $C_j$ by $C_j'$ contradicts that $\sum_{i \in [t]} |V(C_i)|$ is minimum.
\end{proof}

We now consider edges $e$ of $G$ not contained in $\D_t$.  We say that such an edge $e$ is \e{contractible} if $e$ and a subpath of $C_t$ bounds a disk in $\Sg$.  Otherwise, $e$ is \e{non-contractible}.    We say that two paths in $\Sg$ are \e{homotopic (relative to $\bd(\D_t)$)} if there is a homotopy between them that always has its endpoints on $\bd(\D_t)$.  

\begin{claim}
There are at most $2k$ homotopy classes of contractible edges.
\end{claim}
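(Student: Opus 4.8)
The plan is to cut $\Sg$ open along $C_t$ and study the edges of $G$ lying outside the disk $\D_t$. Write $\Sg':=\Sg-\mathsf{int}(\D_t)$; since $\Sg$ has no boundary, $\Sg'$ is a surface whose unique boundary component is $C_t$. By the two preceding claims $V(G)=V(H)\subseteq\D_t$, so an edge $e$ of $G$ not contained in $\D_t$ has both ends on $C_t$ and interior in $\mathsf{int}(\Sg')$; thus $e$ is a $\bd(\Sg')$-path. If $e$ is contractible, then $e$ together with a subpath $A_e$ of $C_t$ bounds a disk $D_e$, and, unless $\Sg$ is a sphere, $A_e$ is unique (if both arcs of $C_t$ between the ends of $e$ bounded a disk with $e$, then $\Sg'$, hence $\Sg$, would be a sphere, a case which is essentially the planar bound of~\cite{tightplanar}). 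Let $A_e':=C_t-\mathring{A_e}$ be the complementary arc; note that $\mathsf{int}(D_e)$ is disjoint from $\D_t$ and so contains no vertex of $G$.

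The heart of the argument is a rerouting step that exploits the fact that $C_1,\dots,C_t$ were chosen with $\sum_{i\in[t]}|V(C_i)|$ minimum: I claim that \emph{for every contractible edge $e$, the interior of $A_e$ contains a vertex of $V(\Pi)$}. Suppose not. Since $G$ is simple, $e$ is not parallel to an edge of $C_t$, so both $A_e$ and $A_e'$ have at least two edges; pick a vertex $z\in\mathring{A_e}$, so $z\notin V(\Pi)$ by assumption. Then $\D_t^\ast:=\D_t\cup D_e$ is again a disk (two disks glued along the boundary arc $A_e$) with boundary the cycle $C_t^\ast:=e\cup A_e'$ of $G$, and $\mathsf{int}(\D_t^\ast)=\mathsf{int}(\D_t)\cup\mathsf{int}(D_e)\cup\mathring{A_e}$. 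The cycles $C_1,\dots,C_{t-1},C_t^\ast$ are pairwise vertex-disjoint (each $C_i$ with $i<t$ lies in $\mathsf{int}(\D_t)$, while $V(C_t^\ast)\subseteq V(C_t)$) and bound nested disks $\D_1\subseteq\dots\subseteq\D_{t-1}\subseteq\D_t\subseteq\D_t^\ast$ with $v\in\D_1$; moreover $V(\Pi)\subseteq V(C_t)$ and $V(\Pi)\cap\mathring{A_e}=\emptyset$, so $V(\Pi)\subseteq V(C_t^\ast)=\bd(\D_t^\ast)$ and $V(\Pi)$ is disjoint from $\mathsf{int}(\D_t^\ast)$. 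Hence $C_1,\dots,C_{t-1},C_t^\ast$ also protect $v$, but $\sum_i|V(C_i)|$ has strictly decreased since $z$ is no longer on any cycle, contradicting minimality.

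Finally, pushing $e$ across $D_e$ shows that each contractible edge $e$ is homotopic, relative to $\bd(\D_t)$, to the subpath $A_e$ of $C_t$, whose interior meets $V(\Pi)$ by the previous step. Since $|V(\Pi)|\le 2k$, these vertices cut $C_t$ into at most $2k$ arcs, and I would close the argument by checking that the homotopy class of $A_e$ (relative to $\bd(\D_t)$) is pinned down by a single vertex of $V(\Pi)$ lying on it: choosing in each class a representative with $D_e$ inclusion-minimal and rerunning the rerouting argument once more should force $\mathring{A_e}\cap V(\Pi)$ to be a single vertex, so that at most $|V(\Pi)|\le 2k$ classes occur. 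I expect this last bookkeeping — isolating exactly which information about the position of a contractible edge relative to $V(\Pi)$ its homotopy class records — to be the only delicate point; everything else reduces to the rerouting argument above together with the classification of surfaces.
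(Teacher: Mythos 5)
Your rerouting step is exactly the paper's key idea, and you carry it out correctly (in more detail than the paper does): if $\mathring{A_e}$ avoided $V(\Pi)$, then $C_1,\dots,C_{t-1}, e\cup A_e'$ would protect $v$ with strictly smaller $\sum_i|V(C_i)|$, so every contractible edge's arc $A_e$ contains an internal vertex of $V(\Pi)$. (Your aside about the sphere is a non-issue: one simply chooses for $D_e$ a disk whose interior is disjoint from $\Delta_t$, and the same argument runs.)

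The gap is the final counting step, which you defer as ``bookkeeping'' and whose proposed completion does not work. The paper finishes with a different observation: two contractible edges $e,f$ are homotopic if and only if their arcs are nested ($A_e\subseteq A_f$ or $A_f\subseteq A_e$); since $e$ and $f$ are disjoint curves of an embedded graph, their disks $D_e,D_f$ are nested or disjoint, so for a pairwise non-homotopic family the arcs $A_e$ are pairwise \emph{internally disjoint}, and as each contains an internal $V(\Pi)$-vertex there are at most $|V(\Pi)|=2k$ classes. You never establish any such link between the homotopy relation and the arcs. Your substitute --- forcing the inclusion-minimal representative of each class to have exactly one $V(\Pi)$-vertex on its arc ``by rerunning the rerouting argument'' --- fails: the rerouting is available only when $\mathring{A_e}\cap V(\Pi)=\emptyset$, since replacing $C_t$ by $e\cup A_e'$ puts the vertices of $\mathring{A_e}$ into the interior of the new outer disk and destroys the condition that $V(\Pi)$ avoids it; a single contractible edge whose arc contains two terminals is a perfectly consistent configuration, so nothing forces one vertex per arc. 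And even granting one vertex per minimal representative, injectivity of the assignment (class $\mapsto$ vertex) would again require knowing that arcs of non-homotopic edges are internally disjoint. So the missing idea is precisely the nestedness/disjointness characterization of homotopy for contractible edges, not a further application of the minimality argument.
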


\begin{proof}[Subproof]
For each contractible edge $e$, let $P_e$ be a subpath of $C_t$ such that $P_e \cup e$ bounds a disk in $\Sg$.  Observe that $e$ and $f$ are homotopic if and only if $P_e \sub P_f$ or $P_f \sub P_e$.   Now let $\mathcal{E}$ be a collection of contractible edges that are pairwise non-homotopic.   It follows that $\mathcal{P}:=\{P_e: e \in \mathcal{E}\}$ is a collection of pairwise internally disjoint paths of $C_t$.  Also, each $P_e$ must contain an internal vertex which is in $V(\Pi)$, for otherwise we could replace $C_t$ in $H$ by a shorter cycle.  So
\[
|\mathcal{E}| = |\mathcal{P}| \leq |V(\Pi)|=2k. \qedhere
\] 
\end{proof}

\begin{claim}
There are at most $3g(\Sg)$ homotopy classes of non-contractible edges.
\end{claim}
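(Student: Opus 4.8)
The plan is to close each non-contractible edge into a simple closed curve through a common basepoint and then to apply Lemma~\ref{bouquet}. First I would pin down the geometry: since $V(G)=V(H)$, every vertex of $G$ lies in $\D_t$, and as the embedding is proper, any edge $e$ not contained in $\D_t$ has both ends on $C_t=\bd(\D_t)$ and lies otherwise in $\Sg':=\Sg-\mathsf{int}(\D_t)$; that is, $e$ is a $\bd(\Sg')$-path in $\Sg'$, a surface with exactly one hole and $g(\Sg')=g(\Sg)$.

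Next I would pass to the closed surface $\Sg'':=\Sg/\D_t$ obtained by contracting the disk $\D_t$ to a point $b$; since $\D_t$ is a disk, $\Sg''\cong\Sg$, and in particular $g(\Sg'')=g(\Sg)$. Each edge $e$ not contained in $\D_t$ projects to a simple closed curve $C_e$ through $b$, and three facts do the work. (i) Distinct edges of $G$ meet only in common ends, which lie in $\D_t$, and $G$ is simple, so no two edges share both ends; hence $C_e\cap C_f=\{b\}$ for $e\neq f$. (ii) $C_e$ bounds a disk in $\Sg''$ (equivalently, is null-homotopic) if and only if $e$, together with one of the two arcs into which its ends cut $C_t$, bounds a disk in $\Sg$ --- that is, $C_e$ is non-contractible in $\Sg''$ exactly when $e$ is a non-contractible edge. (iii) A homotopy of $e$ in $\Sg$ keeping its ends on $\bd(\D_t)$ projects to a homotopy of $C_e$ based at $b$, and conversely, because contracting a disk is a homotopy equivalence; so $e$ and $f$ are homotopic relative to $\bd(\D_t)$ if and only if $C_e$ and $C_f$ are homotopic with respect to $b$.

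Granting these, the claim is immediate: pick one edge from each homotopy class of non-contractible edges, orient it arbitrarily, and let $\mathcal{C}$ be the resulting family of curves in $\Sg''$. By (i)--(iii), $\mathcal{C}$ consists of non-contractible simple closed curves through $b$, pairwise meeting only in $b$, and pairwise non-homotopic with respect to $b$ (distinct edge-homotopy classes yield distinct curves). Lemma~\ref{bouquet} then bounds $|\mathcal{C}|$, and hence the number of homotopy classes of non-contractible edges, by $3g(\Sg'')=3g(\Sg)$.

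The main obstacle is the topological translation in (ii): matching the combinatorial definition of a contractible edge (given in terms of an arc of $C_t$) with null-homotopy of $C_e$ in $\Sg''$. The delicate case is when a disk in $\Sg$ witnessing contractibility of $e$ actually contains $\mathsf{int}(\D_t)$; one then excises $\mathsf{int}(\D_t)$ from that disk and reglues along the shared arc of $C_t$ to obtain a disk lying in $\Sg'$ and bounded by $e$ together with the complementary arc of $C_t$, which is what is needed. Everything else is routine, using the simplicity of $G$ established in the earlier claims.
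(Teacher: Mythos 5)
Your proposal is correct and follows essentially the same route as the paper: contract $\D_t$ to a basepoint $b$, observe that the non-contractible, pairwise non-homotopic edges become non-contractible simple closed curves through $b$ that pairwise meet only at $b$ and are pairwise non-homotopic rel $b$, and invoke Lemma~\ref{bouquet}. The paper states this in three lines; your extra care in points (i)--(iii), in particular the translation between edge-contractibility and null-homotopy of $C_e$, just makes explicit what the paper leaves implicit.
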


\begin{proof}[Subproof]
Let $\mathcal{N}$ be a collection of non-contractible edges that are pairwise non-homotopic.  Contract the disk $\D_t$ to a point in $b$ in $\Sg$, and let $\mathcal{N}^*$ be the resulting family of curves.  Note that $\mathcal{N}^*$ is now a collection of simple non-contractible closed curves on $\Sg$, each containing $b$ but otherwise pairwise disjoint.  Furthermore, the curves in $\mathcal{N}^*$ are pairwise non-homotopic (relative to the base point $b$).  By Lemma~\ref{bouquet}, there are at most $3g(\Sg)$  such curves.
\end{proof}

By regarding each homotopy class as passing through a distinct strip, we can view $G$ as being embedded on a disk with at most $2k+3g(\Sg)$ strips, $\Omega$, where $\D(\Omega)=\D_t$.  Unfortunately, to apply Theorem~\ref{insulate}, we require $V(\Pi)$ to be on $\bd(\Omega) \cap \D(\Omega)$.  However, if $x \in V(\Pi)$ is not on a corner of a strip of $\Omega$, then we may split a strip in half, and place $x$ at a corner of one of the new strips.  Note that we only need to apply this operation at most $2k$ times.  So, we have shown the following.  

\begin{claim}
$G$ is a graph embedded in a disk with at most $4k+3g(\Sg)$ strips $\Omega'$, $\Pi$ is a pattern in $G$ of size $k$, $v \in V(G)$ is a $\theta(k, 4k+3g(\Sg))$-insulated vertex in $\D(\Omega')$ with respect to $\Pi$, and $V(\Pi) \sub \bd(\Omega') \cap \D(\Omega')$.
\end{claim}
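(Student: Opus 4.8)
The plan is to finish the proof of Theorem~\ref{protect} in two short moves: first assemble the earlier claims into the statement of this final Claim, and then feed the resulting configuration into Theorem~\ref{insulate} to contradict the choice of counterexample.

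For the Claim, I would argue as follows. By the claim that $V(G)=V(H)$, no edge of $G$ can cross the cycle $C_t=\bd(\D_t)$, so every edge of $G$ lying outside $\D_t$ has both ends on $C_t$; by the same claim $V(\Pi)\sub V(C_t)$. By the claim on contractible edges there are at most $2k$ homotopy classes of such edges that are contractible, and by the claim on non-contractible edges at most $3g(\Sg)$ homotopy classes that are non-contractible. Thickening one representative of each class into a strip that also carries its homotopic companions exhibits $G$ as embedded in a disk with at most $2k+3g(\Sg)$ strips $\Omega$ with $\D(\Omega)=\D_t$. The cycles $C_1,\dots,C_t$ lie in $\D_t$, are pairwise vertex-disjoint, are nested with $\D_1\sub\dots\sub\D_t=\D(\Omega)$, and are induced subgraphs of $G\cap\D_t$ by the claim to that effect; moreover $V(\Pi)\sub V(C_t)\sub\bd(\D_t)$ is disjoint from $\mathsf{int}(\D_t)$. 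Hence $v$ is $t$-insulated --- that is, $\theta(k,4k+3g(\Sg))$-insulated --- in $\D(\Omega)$. Finally, to make $V(\Pi)\sub\bd(\Omega)\cap\D(\Omega)$, note that each $x\in V(\Pi)$ already lies on $C_t\sub\D(\Omega)$, and the only obstruction to $x\in\bd(\Omega)$ is that $x$ might be an interior point of a strip end; in that case slice the strip lengthwise along the edge at $x$ leaving $\D_t$, which replaces one strip by two, moves $x$ to a corner, and changes neither $G$ nor $\D_t$ nor the cycles $C_i$. At most $2k$ such slices are needed, producing a disk $\Omega'$ with at most $4k+3g(\Sg)$ strips for which every assertion of the Claim holds.

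Theorem~\ref{protect} then follows: apply Theorem~\ref{insulate} to $G,\Pi,v$ with $n:=4k+3g(\Sg)$; its hypotheses are exactly the content of the Claim, and $t(\Sg,k)=\theta(k,4k+3g(\Sg))$, so $v$ is redundant, contradicting that $(G,\Sigma,\Pi,v)$ is a counterexample. Computability of $t(\Sg,k)$ is then immediate since $\theta$ is given explicitly in the proof of Theorem~\ref{insulate} and $g(\Sg)$ is computable from $\Sg$. Nothing in this last stretch is genuinely hard once Theorem~\ref{insulate} is granted; the only points requiring care are the two topological bookkeeping facts --- that the edges of $G$ outside $\D_t$ really do organize into at most $2k+3g(\Sg)$ strips (using the homotopy-class counts and Lemma~\ref{bouquet}) and that the strip-splitting operation preserves both the embedding of $G$ and the insulation of $v$ --- and the real difficulty of the whole argument has been deliberately isolated into Theorem~\ref{insulate}.
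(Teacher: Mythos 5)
Your proposal is correct and follows essentially the same route as the paper: bundle the homotopy classes of edges outside $\D_t$ (at most $2k$ contractible plus $3g(\Sg)$ non-contractible, by the preceding claims and Lemma~\ref{bouquet}) into strips, observe that the nested induced cycles make $v$ insulated, and split at most $2k$ strips to move the vertices of $V(\Pi)$ onto $\bd(\Omega')\cap\D(\Omega')$, giving at most $4k+3g(\Sg)$ strips. The paper's own argument is exactly this, so there is nothing to add.
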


By definition of the function $\theta$, we have that $v$ is indeed redundant for $\Pi$.
\end{proof}

\section{Redundant Vertices on Disks with Strips} \label{technical}
In this section we prove Theorem~\ref{insulate}, which we restate for convenience.  Our proof is based on an unpublished proof of Carl Johnson and Paul Seymour presented at the Workshop on Graph Theory in Oberwolfach, 1999.

\begin{reptheorem}{insulate}
For all $k, n \in \N$, there exists $\theta:=\theta(k,n) \in \N$ such that if $G$ is a graph embedded in a disk with $n$ strips $\Omega$, $\Pi$ is a pattern in $G$ of size $k$, $v \in V(G)$ is a $\theta$-insulated vertex in $\D(\Omega)$ with respect to $\Pi$, and $V(\Pi) \sub \bd(\Omega) \cap \D(\Omega)$, then $v$ is redundant.
\end{reptheorem}

\begin{proof}
Let $\theta(k,n)=\tower(100, 200, \dots, 100n, k100^n)$ and let $m(k,n)=(4n+1)k3^n+8k$.  Note that $\theta(k,0)=k$ for all $k$.  Also, for all $n>0$, an easy induction gives
\[
\theta(k,n) \geq \theta(k + 4m(k,n) (2n+1)^{4n m(k,n)},  n-1)+ 2k+nk3^n.
\]
These are the only two properties of $\theta(k,n)$ that we will use.  Note that $\theta(k,n)$ does not depend on $G$.  

Let $(G, \Omega, \Pi, v)$ be a counterexample with $|E(G)|$ minimal.  Let $n$ be the number of strips in $\Omega$, $k$ the size of $\Pi$, and $\theta=\theta(n,k)$.  Then $v$ is $\theta$-insulated in $\Omega$ with respect to $\Pi$, $V(\Pi)\subseteq \bd(\Omega)\cap \Delta(\Omega)$, and yet $v$ is essential.

Let $\Omega:=\D \cup S_1 \cup \dots \cup S_n$, and let $C_1, \dots, C_\theta$ be cycles insulating $v$, bounding disks $\D_1 \subset \dots \subset \D_{\theta}=\D$.  Let $\mathcal{L}$ be a $\Pi$-linkage in $G$ and let $H=C_1 \cup \dots \cup C_\theta$.  Notice that we may assume $\bd(\Omega) - \Delta_\theta$ is disjoint from $G$.

\begin{claim}
$E(H) \cap E(\mathcal{L})=\emptyset$ and $E(H) \cup E(\mathcal{L})=E(G)$.
\end{claim}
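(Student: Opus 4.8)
The plan is to use the minimality of $|E(G)|$ twice, producing in each case a strictly smaller counterexample by deleting or contracting one edge.

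\textbf{$E(H)\cup E(\mathcal{L})=E(G)$.} Suppose some edge $e$ lies in neither $E(H)$ nor $E(\mathcal{L})$; I would show that $(G\setminus e,\Omega,\Pi,v)$ is again a counterexample. Since $e\notin E(H)$, the cycles $C_1,\dots,C_\theta$ persist in $G\setminus e$ and bound the same nested disks $\D_1\sub\cdots\sub\D_\theta=\D(\Omega)$, each $C_i$ remains an induced subgraph of $(G\setminus e)\cap\D(\Omega)$ (a new chord of $C_i$ would already have been a chord in $G$), and the conditions $V(\Pi)\sub\bd(\Omega)\cap\D(\Omega)$ and $V(\Pi)\cap\mathsf{int}(\D_\theta)=\emptyset$ are unaffected, so $v$ is still $\theta$-insulated. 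Since $e\notin E(\mathcal{L})$, $\mathcal{L}$ is still a $\Pi$-linkage in $G\setminus e$, so $G\setminus e$ has a $\Pi$-linkage, whereas $(G\setminus e)-v\sub G-v$ has none because $v$ is essential in $G$; hence $v$ is essential in $G\setminus e$, contradicting minimality.

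\textbf{$E(H)\cap E(\mathcal{L})=\emptyset$.} Suppose $e=xy\in E(C_j)\cap E(L_i)$. Because $x,y$ are consecutive on $L_i$ and the paths of $\mathcal{L}$ are pairwise disjoint with distinct ends, at most one of $x,y$ lies in $V(\Pi)$ (barring the degenerate case that $\{x,y\}\in\Pi$ with $L_i$ the single edge $e$, which I would handle separately), and neither of $x,y$ equals $v$ since $v\in\mathsf{int}(\D_1)$, which misses every $C_l$. I would then contract $e$, onto the vertex of $\{x,y\}$ lying in $V(\Pi)$ if there is one, and argue that $(G/e,\Omega,\Pi,v)$ is a smaller counterexample: the cycles $C_l$ with $l\neq j$ are untouched, $C_j$ becomes the shorter cycle $C_j/e$, the nested-disk and induced-ness conditions carry over, so $v$ is again $\theta$-insulated; $\Pi$ is unchanged and still lies on $\bd(\Omega)\cap\D(\Omega)$; $\mathcal{L}/e$ is a $\Pi$-linkage in $G/e$; and $v$ is still essential, because any $\Pi$-linkage in $(G/e)-v$ lifts, by re-expanding the contracted vertex (through which at most one path of the linkage runs) into the edge $e$, to a $\Pi$-linkage in $G-v$.

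\textbf{Main obstacle.} The part I expect to be most delicate is confirming that the $\theta$-insulated structure genuinely descends to $G/e$: one needs $C_j/e$ to be a nondegenerate induced cycle bounding a disk properly nested between $\D_{j-1}$ and $\D_{j+1}$, which is clear when $C_j$ has length at least four but fails for a triangular $C_j$. So the argument really wants the minimal counterexample to be reduced enough to exclude this---e.g.\ that $G$ is simple and the insulating cycles are chosen with $\sum_i|V(C_i)|$ minimum, as in the proof of Theorem~\ref{protect}---or else the short-cycle (and demand-pair-edge) cases must be disposed of by a small ad hoc rerouting. Getting that bookkeeping right is where the care lies.
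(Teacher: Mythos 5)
Your argument is exactly the paper's: the paper disposes of this claim in a single sentence, observing that deleting any edge outside $E(H)\cup E(\mathcal{L})$ or contracting any edge of $E(H)\cap E(\mathcal{L})$ yields a smaller counterexample, which is precisely the two reductions you carry out. The degenerate cases you flag at the end (a triangular $C_j$, a demand pair joined by a single shared edge) are not addressed in the paper either, so your treatment is, if anything, more careful than the original.
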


\begin{proof}[Subproof]
Contracting any edges in $E(H) \cap E(\mathcal{L})$ or deleting any edges not in $E(H) \cup E(\mathcal{L})$ would both yield smaller counterexamples.
\end{proof}

\begin{claim}
$V(G)=V(H)$.
\end{claim}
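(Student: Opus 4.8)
The plan is to carry out, inside the disk with strips $\Omega$, the same argument that proves the analogous claim in the proof of Theorem~\ref{protect}. Suppose for contradiction that $V(G)\neq V(H)$. By the preceding claim, $E(G)=E(H)\cup E(\mathcal{L})$, so any vertex of $G$ outside $V(H)$ is either isolated or lies on a path of $\mathcal{L}$. First I would deal with an isolated vertex $w\notin V(H)$: if $w\neq v$, then $G-w$ is again a graph embedded in $\Omega$ with the same pattern $\Pi$, the same insulating cycles $C_1,\dots,C_\theta$, and the same status of $v$, hence a counterexample with fewer edges; and if $w=v$, then $v\in\mathsf{int}(\D_\theta)$, so $v\notin V(\Pi)$ and $v$ is trivially redundant, a contradiction. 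Thus $V(G)=V(H)\cup V(\mathcal{L})$.

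Now fix $y\in V(\mathcal{L})\setminus V(H)$ and let $L_i$ be the path of $\mathcal{L}$ through $y$. Since the ends of $L_i$ lie in $V(\Pi)\subseteq V(C_\theta)\subseteq V(H)$, the vertex $y$ is internal to $L_i$; as $y$ has no edge of $H$ and the paths of $\mathcal{L}$ are pairwise vertex-disjoint, $\deg_G(y)=2$. I would then choose an edge $e=xy\in E(L_i)$ and pass to $G/e$ together with the shortened linkage $\mathcal{L}/e$. Contracting $e$ preserves embeddability in $\Omega$; since $y\notin V(\Pi)$ the pattern is unchanged; since $y\notin V(H)$ and $e\notin E(H)$ the cycles $C_1,\dots,C_\theta$ still insulate $v$; and, choosing $e$ so that $x\neq v$ (and assuming $y\neq v$, addressed below), the vertex $v$ is still essential, because $(G/e)-v=(G-v)/e$ and every $\Pi$-linkage of $(G-v)/e$ lifts to one of $G-v$ by expanding the path through the contracted vertex back along $e$ — contradicting that $v$ is essential in $G$. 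This produces a counterexample with fewer edges, the desired contradiction.

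The step I expect to be the main obstacle is precisely justifying the proviso $y\neq v$, i.e. that $v\in V(H)$. When $v\neq y$ this is clear, since $y$ has only two neighbours on $L_i$, so at most one of them is $v$ and we take $e$ to be the other edge. The genuine case to handle is $y=v$, i.e. $v$ is the unique vertex of $G$ outside $V(H)$; then $v\in\mathsf{int}(\D_1)$, $\deg_G(v)=2$, and, because $C_2,\dots,C_\theta$ lie outside $\D_1$ so that $V(H)\cap\D_1=V(C_1)$, both neighbours of $v$ lie on the cycle $C_1$, which is induced in $G\cap\D(\Omega)$ and hence has length at least $3$. The plan in this case is to show $v$ is in fact redundant: the subpath of $L_i$ through $v$ together with $C_1$ divides $\D_1$ into two disks whose interiors are disjoint from $\mathcal{L}$, so one should be able to replace that subpath by a path along a suitable arc of $C_1$ and thereby obtain a $\Pi$-linkage of $G-v$, contradicting essentiality. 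Checking that such a replacement path really does avoid the rest of $\mathcal{L}$ is the delicate point; alternatively one avoids this case entirely by choosing the insulating cycles extremally at the outset (for instance with $\sum_i|V(C_i)|$ minimum and then with the fewest vertices strictly inside $\D_1$), which forces $v$ onto $C_1$.
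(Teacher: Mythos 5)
Your main argument --- contract an edge of $\mathcal{L}$ at a vertex $y\notin V(H)$ and invoke minimality of $|E(G)|$ --- is exactly the paper's proof, which is a one-liner: take any edge $xy$ with $y\notin V(H)$, observe that $y\notin V(C_\theta)$ and hence $y\notin\bd(\Omega)$ (so $y\notin V(\Pi)$), and contract to get a smaller counterexample. Your verification that contraction preserves the pattern, the insulating cycles, and the essentiality of $v$ is the content the paper leaves implicit.

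Where you diverge is the case $y=v$, which you exclude from the contraction and instead try to settle by rerouting $L_i$ along an arc of $C_1$. As you yourself flag, that rerouting is the weak point: other paths of $\mathcal{L}$ may touch vertices of $C_1$ lying on \emph{both} arcs between the two neighbours of $v$, so neither arc need be free, and at this stage you cannot yet invoke the no-hill claim to clean this up. The simpler resolution, and the one implicit in the paper's uniform treatment, is to contract anyway: contract $xv$ onto $x$ and take the merged vertex $\bar v$ as the insulated, essential vertex of the new instance. It lies in $\D_1$ because the edge $xv$ begins in $\mathsf{int}(\D_1)$ and an embedded edge cannot cross $C_1$; the cycles $C_1,\dots,C_\theta$ are untouched since $v$ lies on none of them; and $(G/xv)-\bar v=G-x-v$ has no $\Pi$-linkage because $G-v$ has none, while $G/xv$ inherits the linkage $\mathcal{L}/xv$. (If the contraction creates a chord of $C_1$, replace $C_1$ by the induced cycle through the chord that still contains $\bar v$.) One further nit: deleting an isolated vertex does not yield a counterexample with \emph{fewer edges}, since the minimality here is on $|E(G)|$ alone; isolated vertices should simply be excluded at the outset or the minimality taken lexicographically over $(|E(G)|,|V(G)|)$ --- a gap the paper's own one-line proof shares.
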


\begin{proof}[Subproof]
Let $xy$ be an edge with $y \notin V(H)$. Since $y \notin V(H)$, $y \notin V(C_\theta)$, and, therefore $y \notin \bd(\Omega)$.  Thus, $G / xy$ is a smaller counterexample. 
\end{proof}

We now examine how $\mathcal{L}$ passes through $\Omega$.  The \e{level} $\ell(x)$ of a vertex $x$ in $C_j$ is defined to be $j$.  Let $P$ be a path with ends $a$ and $b$.  We call $P$ a \e{hill} if

\begin{itemize}

\item
$\ell(a)=\ell(b)$, 

\item
$\ell(c)>\ell(a)$ for all internal vertices $c$ of $P$, and

\item
$P$ and a subpath of $C_{\ell(a)}$ bounds a disk in $\Omega$.

\end{itemize}

Note that if a path $P$ satisfies the first two bullet points and $P \subseteq \D$, then $P$ will automatically satisfy the third bullet point.  However, there may be hills not contained in $\D$.  For example, an edge $xy$ contained in a strip $S$ is a hill if and only if $x$ and $y$ are both on a same end of $S$.

The \e{sea level} $\ell(P)$ of  a hill $P$ is defined to be the level of either of its ends.   Observe there is a subpath $K_P$ of $C_{\ell(P)}$ so that $P\cup K_P$ bounds a disc whose interior is disjoint from the insulated vertex $v$.

\begin{claim} \label{nohill}
$\mathcal{L}$ (as a subgraph) does not contain a hill.
\end{claim}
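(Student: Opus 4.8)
The plan is to argue by contradiction using the minimality of $(G,\Omega,\Pi,v)$: if $\mathcal{L}$ contains a hill, I would excise the disc it bounds, producing a strictly smaller embedded graph that still has all the hypotheses of the theorem (a $\theta$-insulated vertex with respect to a $k$-pattern), and in which $v$ remains essential. Concretely, suppose $P\subseteq\mathcal{L}$ is a hill with sea level $\ell(P)$, and let $K_P$ be the subpath of $C_{\ell(P)}$ such that $P\cup K_P$ bounds a disc $D_P$ in $\Omega$ whose interior misses $v$. The hill $P$ is a subpath of some $L_i\in\mathcal{L}$; since $P$ starts and ends at the same level $\ell(P)$ and stays strictly above it in between, and since $V(\Pi)\subseteq\bd(\Omega)\cap\D(\Omega)$ lies on $C_\theta$, neither endpoint of $L_i$ lies in the interior of $D_P$.

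The key step is a rerouting: replace the subpath $P$ of $L_i$ by the subpath $K_P$ of $C_{\ell(P)}$. Because $D_P$'s interior is disjoint from $v$ and from $C_1,\dots,C_{\ell(P)-1}$ (these are nested strictly inside $\D_{\ell(P)-1}$), the new path $L_i'$ obtained this way still realizes the same pair of $\Pi$, and $\mathcal{L}':=(\mathcal{L}\setminus\{L_i\})\cup\{L_i'\}$ is again a $\Pi$-linkage — provided the vertices of $K_P$ were not already used by other members of $\mathcal{L}$. This disjointness is where one must be careful: by the first Claim of this section $E(H)$ and $E(\mathcal{L})$ partition $E(G)$, so no edge of $C_{\ell(P)}$ lies on $\mathcal{L}$, but vertices of $C_{\ell(P)}$ could a priori be used. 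I would handle this by choosing $P$ to be an \emph{innermost} hill — one for which no proper subpath is a hill, equivalently one for which $D_P$ is minimal — and then observe that any member of $\mathcal{L}$ meeting the interior of $D_P$ would have to enter and leave through $C_{\ell(P)}$, hence would itself contain a hill at sea level $\ell(P)$ lying strictly inside $D_P$, contradicting minimality of $D_P$. Thus the interior of $D_P$ meets no member of $\mathcal{L}$ other than $L_i$, and in particular the internal vertices of $K_P$ are free.

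Having rerouted, I would then \emph{delete} the (now unused) internal vertices and edges of $D_P$, or at least delete a single edge of $P$: the point is that the rerouting strictly frees up an edge of $\mathcal{L}$, so $G$ minus that edge still embeds in $\Omega$, still has $v$ as a $\theta$-insulated vertex with respect to $\Pi$ (the cycles $C_1,\dots,C_\theta$ are untouched, being part of $H$, and they remain induced in $G\cap\D(\Omega)$), and still has $v$ essential since we have exhibited a $\Pi$-linkage avoiding... wait — we need $v$ \emph{essential}, i.e. that $G-v$ has no $\Pi$-linkage; but deleting an edge cannot create a $\Pi$-linkage in $G-v$, so essentiality is preserved. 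This yields a counterexample with fewer edges, the desired contradiction. The main obstacle is precisely the disjointness bookkeeping in the rerouting: ensuring, via the innermost/minimal-disc choice of $P$, that $K_P$'s interior is genuinely available, and that rerouting does not disturb the nested insulating cycles or move a vertex of $V(\Pi)$ off $\bd(\Omega)$; once the innermost hill is isolated, the surgery and the minimality contradiction are routine.
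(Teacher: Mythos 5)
There is a genuine gap at the step ``Thus the interior of $D_P$ meets no member of $\mathcal{L}$ other than $L_i$, and in particular the internal vertices of $K_P$ are free.'' The ``in particular'' does not follow: the internal vertices of $K_P$ lie on the \emph{boundary} of $D_P$, not in its interior, and a member of $\mathcal{L}$ can pass through such a vertex $w$ while approaching it entirely from the other side of $C_{\ell(P)}$, i.e.\ dipping into $\D_{\ell(P)}$ (the side containing $v$), without ever meeting $\mathsf{int}(D_P)$. Your innermost-disc argument says nothing about this situation: such a visit does produce a hill, but a hill at sea level $\ell(P)-1$ (the two neighbours of $w$ along that path lie on $C_{\ell(P)-1}$, since an edge drawn in $\D$ cannot cross the nested cycles and hence changes level by at most one), and the disc of that hill sits inside $\D_{\ell(P)}$, so it is \emph{incomparable} with $D_P$ rather than strictly contained in it. Hence inclusion-minimality of $D_P$ yields no contradiction, and after your rerouting the new system would share the vertex $w$ with another path of $\mathcal{L}$, so it is not a $\Pi$-linkage.

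This is exactly why the paper orders its extremal choice differently: first take $\sigma$ to be the \emph{lowest sea level} over all hills of $\mathcal{L}$, and only then, among hills at sea level $\sigma$, minimize the length of $K_J$. A path of $\mathcal{L}$ entering $\mathsf{int}(D_J)$ through $K_J$ gives a hill at sea level $\sigma$ with a strictly shorter $K$ (this is the half you did prove, in your language), while a path touching an internal vertex of $K_J$ from the $v$-side gives a hill at sea level $\sigma-1$, contradicting the choice of $\sigma$. Your surgery, the preservation of the insulating cycles, and the edge-deletion/minimality contradiction are all fine and match the paper; what is missing is the second half of the disjointness argument, and with your choice of extremal hill it cannot be supplied without changing the choice (e.g.\ to the paper's lexicographic one).
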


\begin{proof}[Subproof]
Suppose that $\mathcal{L}$ contains a hill.  Let $\sigma$ be the lowest sea level of all hills of $\mathcal{L}$.   Among all hills of $\mathcal{L}$ at sea level $\sigma$, choose $J$ such that the length of $K_J$ is minimal.   By choice of $J$ we have that $\mathcal{L}$ does not use any internal vertex of $K_J$. Therefore, $(\mathcal{L} \bs E(J)) \cup E(K_J)$ is a $\Pi$-linkage.  Letting $e$ be any edge of $J$, we conclude that $G \bs e$ is a smaller counterexample, a contradiction.
\end{proof}

A path $P=x_0 \dots x_q$ of $G$ is \e{decreasing} if $P \sub \D$ and $\ell(x_0) \leq \dots \leq \ell(x_q)$.  We will require the following claim later.

\begin{claim} \label{decreasing}
Let $A \sub V(C_{\theta})$, and let $i \in [\theta]$.  If there exist $|A|$ disjoint $A$-$C_{i}$ paths in $G \cap \D$, then there exist $|A|$ disjoint decreasing $A$-$C_{i}$ paths in $G \cap \D$.
\end{claim}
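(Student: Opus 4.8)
The plan is to exploit the rigid structure of $G\cap\D$ forced by the earlier claims, and then argue by minimality. Since $V(G)=V(H)=V(C_1)\cup\cdots\cup V(C_\theta)$, every vertex has a level in $[\theta]$, and since $\D_1\subseteq\cdots\subseteq\D_\theta$ are nested disks and each $C_j$ is an induced subgraph of $G\cap\D$, every edge of $G\cap\D$ either lies on one cycle $C_j$ (its ends have equal level) or joins $C_j$ to $C_{j+1}$ (a \emph{rung}): an edge joining $C_a$ to $C_b$ with $b\ge a+2$ would have to cross $C_{a+1}$, which separates $C_a$ from $C_b$ in $\D$. Hence the level changes by at most $1$ along each edge of a path. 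Call a subpath $\rho=y_0\cdots y_r$ ($r\ge 2$) of a path a \emph{pocket} if $\ell(y_0)=\ell(y_r)$ and $\ell(y_s)<\ell(y_0)$ for all $0<s<r$. Writing an $A$–$C_i$ path from its $C_i$-end, one checks it is decreasing if and only if it contains no pocket: a forward step lowering the level from some $c$ to $c-1$ must be undone, since the path ends at level $\theta\ge c$, and the first later return to level $c$ produces a pocket; conversely a pocket plainly contains a level-lowering step.

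Choose a family $\mathcal Q=\{Q_1,\dots,Q_p\}$ (with $p=|A|$) of pairwise disjoint $A$–$C_i$ paths in $G\cap\D$ minimising the number $\nu(\mathcal Q)$ of rungs among the edges of $\bigcup_t Q_t$. If every $Q_t$ is decreasing we are done, so suppose not, and choose a pocket $\rho=y_0\cdots y_r$, over all $Q_t$, with $r$ minimum; put $m+1:=\ell(y_0)=\ell(y_r)$. Minimality of $r$ forces every interior vertex of $\rho$ to have level exactly $m$: otherwise the level sequence of $\rho$ dips below $m$ and, on returning to $m$, exhibits a shorter pocket. So $\rho$ is a rung $y_0y_1$, an arc $y_1\cdots y_{r-1}$ of $C_m$, and a rung $y_{r-1}y_r$; in particular $\rho$ uses exactly two rungs, and these lie in $Q_t$ alone.

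Now $\rho$ is a simple arc in the disk $\D_{m+1}$ with its two ends on $C_{m+1}=\bd(\D_{m+1})$ and its interior in $\mathsf{int}(\D_{m+1})$, so it cuts $\D_{m+1}$ into two disks. The rest $\gamma$ of the cycle $C_m$ (the part other than $y_1\cdots y_{r-1}$) meets $\rho$ only at $\{y_1,y_{r-1}\}$, hence lies in one of these two disks; let $\beta$ be the arc of $C_{m+1}$ bounding the other disk $D$. Then $C_m$ misses $\mathsf{int}(D)$, so $\mathsf{int}(D)$—being connected, disjoint from $C_m=\bd(\D_m)$, and having the point $y_0\notin\D_m$ in its closure—is disjoint from $\D_m$ and hence from every $C_\ell$ with $\ell\le m$; it also misses $C_{m+1}$ and every $C_\ell$ with $\ell>m+1$ (these miss $\mathsf{int}(\D_{m+1})\supseteq\mathsf{int}(D)$). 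As $V(G)=\bigcup_\ell V(C_\ell)$, the open disk $\mathsf{int}(D)$ contains no vertex of $G$. Delete the subpath $\rho$ from $Q_t$, reconnect its ends by $\beta$, and let $Q_t'$ be a simple $A$–$C_i$ path inside the resulting walk; then $Q_t'$ uses only edges of $(E(Q_t)\setminus E(\rho))\cup E(\beta)$, so $\{Q_t'\}\cup\{Q_s:s\ne t\}$ is a collection of $p$ paths from $A$ to $C_i$ whose edges together contain at most $\nu(\mathcal Q)-2$ rungs (we discarded the two rungs of $\rho$ and added none, since $\beta\subseteq C_{m+1}$). These paths may fail to be disjoint where $Q_t'$ runs along $\beta$; but, regarding them as flow-paths from $A$ to a new apex vertex adjacent to all of $V(C_i)$ and repeatedly swapping the suffixes of two paths at a shared vertex—which keeps the number of paths and their $A$-ends and creates no new edges—we reach a system of $p$ pairwise disjoint $A$–$C_i$ paths using at most $\nu(\mathcal Q)-2$ rungs, contradicting the choice of $\mathcal Q$. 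Hence every $Q_t$ is decreasing.

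The crucial point is the choice of potential: ordinary total length need not decrease under the reroute, since the arc $\beta$ of $C_{m+1}$ may be long, whereas the number of rungs does decrease, because replacing a pocket by an arc of the next cycle outward is precisely a trade of two rungs for cycle edges. Given that, the only real content is the emptiness of $\mathsf{int}(D)$ in the third paragraph, which is exactly where the hypothesis $V(G)=V(H)$ (itself forced earlier in the proof of Theorem~\ref{insulate}) is used; the concluding uncrossing is routine.
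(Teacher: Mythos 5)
Most of your argument is correct and is a genuinely different route from the paper's: the paper minimises the number of \emph{hills} (peaks) and removes a hill by rerouting along the flat road $K_J$ at its sea level, choosing the hill extremally (lowest sea level, then shortest $K_J$) precisely so that $K_J$ is guaranteed to be unused by the family; you instead minimise the number of rungs and remove a \emph{pocket} (valley) by rerouting along the outer cycle $C_{m+1}$. Your structural preparation is fine: given $V(G)=V(H)$ and the induced-ness of each $C_j$, levels do change by at most one along an edge, a length-minimal pocket is rung--arc--rung, and the interior of $D$ contains no vertex of $G$.

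The gap is the concluding ``uncrossing.'' Nothing prevents some $Q_s$ ($s\ne t$) from using an interior vertex $z$ of $\beta$: $Q_s$ cannot descend from $\mathsf{int}(\beta)$ into $D$, but it can perfectly well dip down into $\beta$ from the annulus between $C_{m+1}$ and $C_{m+2}$ (i.e.\ $Q_s$ has its own pocket with floor $m+1$), and disjointness from $Q_t$ says nothing about $\beta$. In that situation swapping suffixes at $z$ does not remove $z$ from either path, and worse, the union $\bigl(\bigcup_{s\ne t}Q_s\bigr)\cup Q_t'$ may simply fail to contain $p$ disjoint $A$--$C_i$ paths: if $z$ is the only vertex of $Q_s$ on $\beta$, then in the union every walk from $a_s$ to $C_i$ must leave $a_s$ along $Q_s$ and hence pass through $z$, and every walk from $a_t$ to $C_i$ must traverse $\beta$ and hence also pass through $z$, so $\{z\}$ is a cutvertex separating $\{a_s,a_t\}$ from $C_i$ in the union and no rearrangement of suffixes can produce two disjoint paths. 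So the contradiction with the minimality of $\nu(\mathcal Q)$ is not established. The repair is to choose the pocket extremally before rerouting, mirroring the paper's choice for hills: take a pocket whose ceiling $m+1$ is maximum and, among those, whose arc $\beta$ is shortest. Then a path of the family through an interior vertex of $\beta$ would have to enter and leave $\beta$ via rungs to $C_{m+2}$ (descending is impossible, as a rung from $\mathsf{int}(\beta)$ is trapped in $D$ and must land on the arc of $C_m$ belonging to $Q_t$), yielding a pocket with a higher ceiling or a shorter arc, a contradiction; with $\beta$ unused, your reroute preserves disjointness outright and the rung count does the rest.
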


\begin{proof}[Subproof]
The proof is similar to the proof of the previous claim.  Let $\mathcal{A}$ be a collection of $|A|$ disjoint $A$-$Z$ paths in $G \cap \D$ with the minimum number of  hills.  We claim that $\mathcal{A}$ is a family of decreasing paths.  Suppose not and let $\sigma$ be the lowest sea level among all hills in $\mathcal{A}$.  Among all hills of $\mathcal{A}$ at sea level $\sigma$, choose $J$ such that the length of $K_J$ is minimal.   By choice of $J$ we have that $\mathcal{A}$ does not use any internal vertex of $K_J$.  Re-routing $\mathcal{A}$ through $K_J$ contradicts the choice of $\mathcal{A}$.
\end{proof}

Let $Y_1, \dots, Y_{\ell}$ be the components of $C_{\theta} - (\bigcup_{i=1}^n \mathsf{int}(\mathsf{ends}(S_i))$.  Define $X_i:=Y_i \cap V(\Pi)$ and observe that 
$X_1, \dots, X_{\ell}$ is a partition $\mathbb{P}$ of $V(\Pi)$ (possibly some $X_i$ are empty).  We say that a path $P$ of $G$ is a \e{nibble} if $P \sub \D$ and the ends of $P$ are in the same part of the partition $\mathbb{P}$.

\begin{claim}
No path of $\mathcal{L}$ is a nibble.
\end{claim}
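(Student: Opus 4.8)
The plan is to imitate the rerouting arguments used for Claims~\ref{nohill} and~\ref{decreasing}: if some path of $\mathcal{L}$ is a nibble, push it out onto the boundary cycle $C_\theta$ and then delete one of its edges to produce a counterexample with fewer edges. So suppose $L \in \mathcal{L}$ is a nibble; we may assume $L$ has at least one edge (a trivial one-vertex path causes no difficulty). Its two ends $s,t$ lie in a common part $X_i$ of $\mathbb{P}$, hence on the arc $Y_i \subseteq C_\theta$; let $K_L \subseteq Y_i$ be the $s$--$t$ subpath of $Y_i$. Among all nibbles of $\mathcal{L}$, choose $L$ so that $|K_L|$ is minimum.

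The heart of the argument is to show that $\mathcal{L}$ uses no internal vertex of $K_L$. Granting this, $(\mathcal{L} \setminus E(L)) \cup E(K_L)$ is again a $\Pi$-linkage of $G$, since its paths are pairwise disjoint ($K_L$ has no internal vertex on $\mathcal{L}$, and its ends $s,t$ meet only $L$ among the paths of $\mathcal{L}$). Now pick any $e \in E(L)$. As $E(L) \subseteq E(\mathcal{L})$ is disjoint from $E(H) \supseteq E(K_L)$, we have $e \notin E(K_L)$, so the linkage above survives in $G \setminus e$. Deleting $e$, an edge not in $H$, preserves the cycles $C_1,\dots,C_\theta$ and their being induced in $G \cap \D$, so $v$ is still $\theta$-insulated in $G \setminus e$; and $(G \setminus e) - v$ is a subgraph of $G-v$, so it still admits no $\Pi$-linkage. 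Thus $G \setminus e$ is a counterexample with one fewer edge, contradicting minimality of $|E(G)|$.

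To prove the key step, suppose first that $V(L) \cap V(C_\theta) = \{s,t\}$. Since $Y_i$ is disjoint from the interiors of all strip ends, the arc $K_L$ lies on $\bd(\Omega)$, so $L \cup K_L$ is a simple closed curve in the disk $\D$ that cuts off a subdisk $D_L \subseteq \D$ with $D_L \cap C_\theta = K_L$. If an internal vertex $u$ of $K_L$ lies on a path $M \in \mathcal{L}$, then $u \notin V(L)$ forces $M \neq L$, so $M$ is vertex-disjoint from $L$; hence $M$ cannot cross $L$ and cannot leave $\D$ through $K_L$, so $M \subseteq D_L$. Then both ends of $M$, being vertices of $\Pi$ on $C_\theta$, lie on $K_L \subseteq Y_i$, hence in $X_i$, so $M$ is itself a nibble; and as $M \neq L$ its pattern pair differs from $\{s,t\}$, so its associated subpath of $Y_i$ is strictly shorter than $K_L$, contradicting minimality. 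This finishes the claim in this case.

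The step I expect to be the main obstacle is the remaining case, where $L$ meets $C_\theta$ at some interior vertex, so $L \cup K_L$ is not a simple closed curve and the clean enclosure argument above breaks down. I would handle it by running the same enclosure idea on a subpath of $L$ lying strictly inside $\D$ between two consecutive vertices of $V(L) \cap V(C_\theta)$, showing that such a subpath is again ``trapped'' and yields either a shorter nibble or an edge of $L$ whose deletion keeps $v$ insulated and $\Pi$ realizable. Getting this topological bookkeeping right --- for paths of $\mathcal{L}$ that merely touch the boundary cycle $C_\theta$ rather than cross into its interior --- is the delicate point; the remainder is the routine minimality-and-rerouting scheme already used for Claims~\ref{nohill} and~\ref{decreasing}.
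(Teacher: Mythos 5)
Your overall strategy is the paper's strategy: pick an extremal nibble, reroute it along an arc of $C_\theta$ whose interior is untouched by $\mathcal{L}$, and delete an edge of the nibble to contradict edge-minimality. Your bookkeeping for why the rerouted graph is a smaller counterexample (the edge lies in $E(\mathcal{L})\setminus E(H)$, so insulation survives and $(G\setminus e)-v$ still has no $\Pi$-linkage) is correct and in fact more explicit than the paper's. Your first case --- $V(L)\cap V(C_\theta)=\{s,t\}$, so $L\cup K_L$ bounds a subdisk $D_L$, any $M\in\mathcal{L}$ meeting the interior of $K_L$ is trapped in $D_L$ and is a nibble with a strictly shorter arc --- is sound.

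But the second case is a genuine gap, not routine bookkeeping, and your sketch for it does not close it. If $L$ revisits $C_\theta$ at internal vertices, you propose to run the enclosure argument on a maximal subpath $L'$ of $L$ lying strictly inside $\D$ between two consecutive vertices of $V(L)\cap V(C_\theta)$. The problem is that the endpoints of such an $L'$ need not lie in $V(\Pi)$ at all, so $L'$ is not a nibble and your extremal hypothesis (``$|K_L|$ minimum over nibbles of $\mathcal{L}$'') gives you no leverage on whatever is trapped in the region $L'$ cuts off; nor is it clear which of the two regions cut off by $L'$ is the relevant one, or that a path of $\mathcal{L}$ trapped there yields any contradiction with your chosen minimality. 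You would need a different extremal choice whose ``forbidden configuration'' is closed under passing to these subpaths --- this is exactly what the paper does: it selects the nibble $L$ maximizing $\min\{i: L\cap C_i\neq\emptyset\}$ (the nibble that penetrates least deeply toward $v$) and then asserts directly that some arc $K$ of $C_\theta$ with ends $s,t$ has its interior avoided by \emph{every} path of $\mathcal{L}$, including $L$ itself, so the troublesome internal touch points are absorbed into the choice of $K$ rather than split off as a case. As written, your proof establishes the claim only for nibbles meeting $C_\theta$ exactly at their ends, and you acknowledge as much; that acknowledgment does not substitute for the argument.
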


\begin{proof}[Subproof]
Suppose not, and choose a nibble $L \in \mathcal{L}$ such that $\min \{ i : L \cap C_i \neq \emptyset \}$ is maximum.  By choice of $L$ and planarity, there is a path $K$ of $C_{\theta}$ with the same ends as $L$ such that no path of $\mathcal{L}$ uses an internal vertex of $K$.  By replacing $\mathcal{L}$ by $(\mathcal{L} - \{L\}) \cup \{K\}$ and deleting any edge of $L$ from $G$, we contradict that $G$ is a minimal counterexample.  
\end{proof}

By orienting $C_{\theta}$ clockwise, we may view each part of the partition $\mathbb{P}$ as a linearly ordered set.  For distinct $a, b \in C_{\theta}$, we let $[a,b]$ be the clockwise subpath of $C_{\theta}$ from $a$ to $b$.   Let $\{x_1, \dots, x_p\}$ be one of the parts of the partition (labelled in increasing order).  The key point to keep in mind is that $[x_1, x_p]$ is disjoint from all strips of $\Omega$ (except possibly at corners).  For each $x_i$, let $\mathcal{L}(x_i)$ be the (unique) member of $\mathcal{L}$ starting from $x_i$. Define $\omega(x_i)$ to be the number of protective cycles that $\mathcal{L}(x_i)$ intersects before it uses an edge outside of $\D$.

\begin{claim} \label{omega}
For each $i \in [p]$, $\omega(x_i) \geq \min\{i, p-i+1\}$.  
\end{claim}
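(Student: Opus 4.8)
The plan is to first determine the exact shape of the part of $\mathcal{L}(x_i)$ lying in $\D$, and then to run a telescoping argument along the arc $x_1,\dots,x_p$, exploiting a left--right reflection symmetry.

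Write $Q_i$ for the initial segment of $\mathcal{L}(x_i)$ up to (but not including) its first edge outside $\D$, so that $\omega(x_i)$ is the number of cycles among $C_1,\dots,C_\theta$ that meet $Q_i$. First I would check that $Q_i$ is a $C_\theta$-path whose far end $z_i$ lies on $C_\theta$ but not in the open arc $(x_1,x_p)$: the end $z_i$ is either the mate $t_i$ of $x_i$ (in which case $\mathcal{L}(x_i)\subseteq\D$, so by the no-nibble claim $t_i$ lies in a different part of $\mathbb{P}$ than $x_i$, hence outside $[x_1,x_p]$), or $z_i$ lies on the end of a strip (an edge leaving $\D$ leaves through a strip, and strip ends are disjoint from the interior of $[x_1,x_p]$); that $z_i\notin\{x_1,\dots,x_p\}$ follows since the $\mathcal{L}$-paths are disjoint. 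Next comes the key rigidity: since $E(H)\cap E(\mathcal{L})=\emptyset$ and each $C_j$ is induced in $G\cap\D$, every edge of $Q_i$ joins consecutive cycles, so consecutive vertices of $Q_i$ differ in level by exactly $1$; since $\mathcal{L}$ has no hill (Claim~\ref{nohill}) the level sequence of $Q_i$ has no interior local maximum; and since $Q_i$ begins and ends at level $\theta$, its level sequence must be exactly $\theta,\theta-1,\dots,\theta-\omega(x_i)+1,\dots,\theta-1,\theta$. In particular, for $\omega(x_i)\ge 2$ the arc $Q_i$ separates $\D$ into two disks: one, $D^v_i$, containing $v$ together with $C_1,\dots,C_{\theta-\omega(x_i)}$, and the other, $D^0_i$, bounded by $Q_i$ together with an arc $\alpha_i$ of $C_\theta$, whose interior is disjoint from $C_1,\dots,C_{\theta-\omega(x_i)+1}$ (the one vertex of $Q_i$ on $C_{\theta-\omega(x_i)+1}$ being the only point of that cycle touching $\overline{D^0_i}$).

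Because $x_1,\dots,x_p$ is a contiguous block of $C_\theta$ and $z_i\notin[x_1,x_p]$, the arc $\alpha_i$ contains exactly one of $\{x_1,\dots,x_{i-1}\}$ or $\{x_{i+1},\dots,x_p\}$; call $Q_i$ \emph{left-opening} in the first case. I would then prove two lemmas. (Monotonicity) If $Q_i$ is left-opening then, for every $j<i$, $Q_j$ starts in the interior of $\alpha_i$, hence (being disjoint from $Q_i$) lies in $\overline{D^0_i}$ and cannot use the lone vertex of $Q_i$ on $C_{\theta-\omega(x_i)+1}$; since the interior of $D^0_i$ misses $C_1,\dots,C_{\theta-\omega(x_i)+1}$, this forces $\omega(x_j)\le\omega(x_i)-1$. (Propagation) If $Q_i$ is left-opening and $i\ge 2$, then $Q_{i-1}$ is left-opening: $Q_{i-1}\subseteq\overline{D^0_i}$ misses $v\in D^v_i$, so $D^v_i$ (hence $Q_i$, hence $x_i$) lies on the $v$-side of $Q_{i-1}$, so the shallow side $\alpha_{i-1}$ of $Q_{i-1}$ cannot contain $x_i$, i.e.\ $Q_{i-1}$ is left-opening. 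I would also record the unconditional bound $\omega(x_j)\ge 2$ for $2\le j\le p-1$ (since such an $x_j$ is interior to the arc, away from strip ends, so the first edge of $\mathcal{L}(x_j)$ runs from $x_j\in C_\theta$ into $C_{\theta-1}$).

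Now suppose $Q_i$ is left-opening. For $2\le j\le i$ one has $2\le j\le p-1$, so $\omega(x_j)\ge 2$ and $Q_j$ is a genuine separating arc; by the propagation lemma $Q_2,\dots,Q_i$ are all left-opening, so the monotonicity lemma gives $\omega(x_{j-1})\le\omega(x_j)-1$ for $j=2,\dots,i$, and telescoping yields $\omega(x_i)\ge\omega(x_1)+(i-1)\ge i$. Reflecting the entire configuration across the arc interchanges ``left-opening'' with ``right-opening'' and the index $i$ with $p+1-i$, so symmetrically $Q_i$ right-opening gives $\omega(x_i)\ge p-i+1$. Every $Q_i$ with $\omega(x_i)\ge 2$ is left- or right-opening, and the case $\omega(x_i)=1$ occurs only when $i\in\{1,p\}$, where $\min\{i,p-i+1\}=1$; hence $\omega(x_i)\ge\min\{i,p-i+1\}$ in all cases.

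The delicate part is the rigidity step: extracting, from ``no hills'' together with $E(H)\cap E(\mathcal{L})=\emptyset$, that $Q_i$ is literally a monotone descent-and-ascent through consecutive cycles, and carefully handling the degenerate possibilities (when $Q_i$ is a single vertex, or when $x_1$ or $x_p$ sits on a corner of a strip). Once this normal form is in hand, the separation, monotonicity, and propagation arguments are routine planar topology.
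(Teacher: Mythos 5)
Your proof is correct, and it rests on the same underlying geometric fact as the paper's --- the initial segment $Q_i$ of $\mathcal{L}(x_i)$ inside $\D$ cuts off a disk not containing $v$, and any initial segment trapped in that disk cannot descend as far as $Q_i$ does --- but you organize the argument genuinely differently. The paper proceeds by induction on $\min\{i,p-i+1\}$ and by contradiction: assuming $\omega(x_i)\le i-1$, it asks which side of $Q_i$ is the ``small'' one and in each case derives a contradiction from the no-nibble claim, the inductive bounds at \emph{both} $x_{i-1}$ and $x_{p-i+2}$, and (in its second subclaim) a rerouting of $\mathcal{L}(x_i)$ along $C_\theta$ that contradicts minimality of the counterexample. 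You instead first extract a normal form for $Q_i$ (a strict descent to level $\theta-\omega(x_i)+1$ and back, from inducedness of the $C_j$, $E(H)\cap E(\mathcal{L})=\emptyset$, planarity, and the no-hill claim), convert the ``which side is small'' dichotomy into a persistent orientation (left/right-opening), and chain a one-step monotonicity inequality via a propagation lemma. What your version buys: the V-shaped normal form shows $Q_i$ meets its deepest cycle in a single vertex, which is exactly what sharpens the trapping bound to $\omega(x_j)\le\omega(x_i)-1$ rather than $\le\omega(x_i)$; and your observation that the far end $z_i$ of $Q_i$ cannot lie in $(x_1,x_p)$ eliminates the subcase where the paper resorts to the minimality rerouting. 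What it costs is the extra care in the rigidity step, which you correctly flag as the delicate point. One caveat applies equally to both proofs: if some $x_j$ with $2\le j\le p-1$ is a singleton of $\Pi$, then $\mathcal{L}(x_j)$ is a single vertex, $\omega(x_j)=1$, and the claim as stated fails; like the paper, you tacitly assume the relevant paths are nontrivial.
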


\begin{proof}[Subproof]
We proceed by induction on $\min\{i, p-i+1\}$.  Clearly the claim holds for $i \in \{1, p\}$.  Consider an arbitrary $x_i$.  By symmetry we may assume that $i \leq \frac{p}{2}$ and we inductively assume that $\omega(x_{i-1}) \geq i-1$ and $\omega(x_{p-i+2}) \geq i-1$. 

Towards a contradiction assume that $\omega(x_i) \leq i-1$.  Let $a$ be the second vertex of $\mathcal{L}(x_i)$ that is on $C_{\theta}$ ($x_i$ is the first).  Let $Q$ be the subpath of $\mathcal{L}(x_i)$ from $x_i$ to $a$.  Note that $Q \cup [x_i, a]$ and $Q \cup [a,x_i]$ both bound disks in $\D$.  We denote them as $\D_1$ and $\D_2$, respectively.  We say that a region in $\D$ is \e{small} if it does not contain $v$ (the insulated vertex).  Because $\omega(x_i) \leq i-1$, $v$ is not in $\mathcal{L}(x_i)$.  Therefore, exactly one of $\D_1$ or $\D_2$ is small.  There are various cases depending where $a$ lies on $C_{\theta}$ and which of $\D_1$ or $\D_2$ is small.

\begin{scl}
$\D_1$ is not small.
\end{scl}
\begin{proof}[Subproof]
Towards a contradiction assume $\D_1$ is small.  If $a \in [x_i, x_p]$, then $\mathcal{L}$ contains a nibble, a contradiction.  Thus, $a \in [x_p, x_i]$.  Note that  $\omega(x_{p-i+2}) \geq i-1$ by induction.  Since $\omega(x_i) \leq i-1$, the only way to avoid a contradiction is if $\mathcal{L}$ connects $x_i$ to $x_{p-i+2}$ inside $\D$.  However, this path of $\mathcal{L}$ is a nibble, which is also impossible.
\end{proof}

\begin{scl}
$\D_2$ is not small.
\end{scl}
\begin{proof}[Subproof]
Towards a contradiction assume $\D_2$ is small.  If $a \in [x_{i-1}, x_i]$, then $\mathcal{L}$ does not use any internal vertex of $[a, x_i]$.   Therefore,  we can reroute $\mathcal{L}(x_i)$ through $[a,x_i]$, which contradicts that $G$ is a minimal counterexample.   So, $x_{i-1} \in [a, x_i]$.  Since $\omega(x_{i-1}) \geq i-1$, the only way to avoid a contradiction is if $\mathcal{L}(x_i)$ actually connects $x_i$ to $x_{i-1}$ within $\D$.  But then $\mathcal{L}(x_i)$ is a nibble, which is also impossible.
\end{proof}

This completes the proof of the claim, since one of $\D_1$ or $\D_2$ must be small.  Thus, $w(x_i) \geq \min\{i, p-i+1\}$, as required.
\end{proof} 

We now analyze the edges of $G$ not contained in $\D$.   For each strip $S$ let $E(S)$ be the edges of $G$ contained in $S$.

\begin{claim} \label{Smatching}
For each strip $S$, $E(S)$ is a matching with each edge on different ends of $S$.
\end{claim}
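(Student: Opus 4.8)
The claim has two parts: that every edge of $E(S)$ has its two ends on distinct ends of $S$, and that $E(S)$ is a matching. The plan is to derive both from the edge-minimality of the counterexample $G$ together with Claim~\ref{nohill}.

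First I would pin down where $G$ can meet $S$. Since $V(G)=V(H)$ and $H=C_1\cup\dots\cup C_\theta\subseteq\D(\Omega)$, every vertex of $G$ lying in $S$ lies in $S\cap\D(\Omega)$, which is the union of the two ends of $S$; hence every edge of $G$ contained in $S$ has both of its ends on ends of $S$. The cycles $C_1,\dots,C_{\theta-1}$ moreover lie in $\mathsf{int}(\D(\Omega))$ --- their bounding disks are nested with vertex-disjoint boundary cycles, so $\D_i\subseteq\mathsf{int}(\D_{i+1})$ for $i<\theta$ --- and so are disjoint from every strip; thus any edge of $E(S)$ either lies in $\mathcal L$ or is an edge of $C_\theta$ lying along a single end of $S$, and by the set-up of $\Omega$ we may assume the latter does not occur, so $E(S)\subseteq E(\mathcal L)$. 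Now an edge of $E(S)$ with both ends on one end of $S$ would be a hill (by the remark preceding Claim~\ref{nohill}), contradicting Claim~\ref{nohill}; so every edge of $E(S)$ joins the two ends of $S$. This proves the first part.

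For the second part, suppose a vertex $w$ lies in two edges $e=wx$ and $f=wy$ of $E(S)$; by the first part we may take $w$ on an end $e_1$ of $S$ and $x,y$ on the other end $e_2$. As $e,f\in E(\mathcal L)$ and the members of $\mathcal L$ are pairwise vertex-disjoint paths, $w$ has $\mathcal L$-degree exactly $2$, so some $L\in\mathcal L$ contains the subpath $x\,w\,y$ with $e=xw$ and $f=wy$. In the rectangle $S$ the arcs $e$ and $f$ cross the strip and meet only at $w$, so together with the subarc $K$ of $C_\theta$ that runs from $x$ to $y$ along $e_2$ they bound a disk $R\subseteq S$. If $K$ is internally disjoint from $\mathcal L$, then replacing the subpath $x\,w\,y$ of $L$ by $K$ gives a $\Pi$-linkage of $G$ avoiding the edge $e$; since the cycles $C_i$ do not use $e$ (because $E(H)\cap E(\mathcal L)=\emptyset$), the vertex $v$ remains $\theta$-insulated and still essential in $G\setminus e$, so $G\setminus e$ is a counterexample with fewer edges --- contradicting the minimality of $G$.

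So the whole difficulty lies in arranging that $K$ is internally disjoint from $\mathcal L$, and I expect this to be the main obstacle. I would choose the offending pair $(e,f)$ with the arc $K$ as short as possible. Each internal vertex $z$ of $K$ lies in $\mathsf{int}(e_2)$, hence --- since $V(\Pi)\subseteq\bd(\Omega)\cap\D(\Omega)$ avoids the interiors of the strip-ends --- is not an end of any member of $\mathcal L$; so a member $L'$ of $\mathcal L$ through $z$ has an edge at $z$ lying in $S$, which by the first part crosses $S$ to $e_1$. Planarity inside $S$ then forces this edge to terminate at $w$ --- impossible, since $w$ already has $\mathcal L$-degree $2$ --- unless $L'$ exhibits a new offending pair whose connecting arc along $e_2$ is strictly shorter than $K$, contradicting the minimal choice of $(e,f)$. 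Hence $K$ is internally disjoint from $\mathcal L$, and the reduction of the previous paragraph goes through. Everything here is immediate from Claim~\ref{nohill} and edge-minimality except this last planarity bookkeeping inside the rectangle $S$, which is the only point that needs care.
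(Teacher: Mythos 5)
Your first part (every edge of $E(S)$ is an $\mathcal L$-edge joining the two ends of $S$, via the hill remark and Claim~\ref{nohill}) is fine and matches the paper. The gap is in the second part, at exactly the step you flag: you assert that a path $L'\in\mathcal L$ passing through an internal vertex $z$ of $K$ ``has an edge at $z$ lying in $S$.'' Nothing forces this. Since $z$ is not a terminal, $L'$ has two edges at $z$, but both of them may leave $z$ into the interior of the disk $\D$ (for instance, two edges from $z$ to $C_{\theta-1}$). In that case $K$ is \emph{not} internally disjoint from $\mathcal L$, your reroute of $L$ through $K$ is blocked, and neither the minimal choice of $(e,f)$ nor any ``planarity bookkeeping inside the rectangle $S$'' says anything, because the obstruction does not live in $S$ at all. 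So the case you dismiss as routine is precisely the one your argument cannot handle.

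The paper closes this case by a different mechanism: it does not try to make $P$ (your $K$) internally disjoint from $\mathcal L$, but derives a contradiction from Claim~\ref{nohill} whenever an internal vertex of $P$ is used by $\mathcal L$. The point is that an $\mathcal L$-edge at such a vertex $z$ drawn inside $\D$ cannot return to $C_\theta$ (a chord of $C_\theta$ inside $\D$ is a hill at sea level $\theta$), and, because the embedded edge cannot cross the cycle $C_{\theta-1}$, it must end on $C_{\theta-1}$; hence the two $\mathcal L$-edges at $z$ give a path with both ends on $C_{\theta-1}$ and interior vertex $z$ on $C_\theta$, i.e.\ a hill at sea level $\theta-1$, contradicting Claim~\ref{nohill}. (If no internal vertex of $P$ is on $\mathcal L$, the reroute you describe goes through, which is the other branch of the paper's argument.) To repair your proof you would need to add this hill-at-level-$\theta-1$ argument, or some equivalent reason why $\mathcal L$ cannot visit the interior of $K$ from inside $\D$; the shortest-$K$ choice does not substitute for it.
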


\begin{proof}[Subproof]
If $e \in E(S)$ has both ends on a same end of $S$, then $e$ is a hill, which is a contradiction.  If another edge $f \in E(G)$ shares an end with $e$, then $\{e,f\}$ and a subpath $P$ of $C_{\theta}$ bounds a disk in $\Omega$.  If $P$ is just an edge, we may reroute $\mathcal{L}$ through $P$.  If $P$ contains an internal vertex, then $\mathcal{L}$ must contain a hill at sea level $\theta-1$, contradicting Claim~\ref{nohill}.  
\end{proof}

If we regard $\Pi$ as a pattern in $\Omega$ instead of a pattern in $G$, then evidently there is a topological realization of $\Pi$ in $\Omega$, since there is a realization of $\Pi$ in $G$.  Let $\mathcal{M}$ be the topological linkage of size $n$, consisting of the equators of the strips of $\Omega$.  By Theorem~\ref{bound}, there is a topological $\Pi$-linkage $\mathcal{L}'$ such that $|\mathcal{L}' \cap \mathcal{M}| \leq k3^n$.
The pivotal idea is to try and realize $\mathcal{L}'$ in $G$.

Let $m:=(4n+1)k3^n+8k$ and $N:=\theta(k+4m(2n+1)^{4nm}, n-1)$.  Observe that $\theta(k,n)=N+2k+nk3^n$.  We set $M$ to be the matroid on $V(C_\theta)$ with rank function $\kappa_{V(C_\theta),V(C_N)}$.

For each strip $S$ of $\Omega$, we let $V(S)$ be the vertices covered by $E(S)$.  By Claim~\ref{Smatching}, we may partition  $V(S)$ as $V_0(S) \cup V_1(S)$, according to the end of $S$ a vertex belongs to.  For $i=0,1$, we let $M_i(S)$ be the restriction of $M$ to $V_i(S)$ respectively. 
We may use the matching $E(S)$ to identify a vertex in $V_0(S)$ with a vertex in $V_1(S)$; in this way, we may regard $M_0(S)$ and $M_1(S)$ as matroids on the
same ground set.  For $X \sub V_0(S)$ we let $\clone(X)$ be the copy of $X$ in $V_1(S)$.

Recall that $m=(4n+1)k3^n+8k$.  We first consider the case when $M_0(S)$ and $M_1(S)$ have a large common independent set, for each strip $S$ of $\Omega$.

\begin{case} 
For each strip $S$ of $\Omega$, $M_0(S)$ and $M_1(S)$ have a common independent set of size $m$.
\end{case}

\begin{claim} \label{indroots}
Each part of the partition $\mathbb{P}$ of $V(\Pi)$ is independent in $M$.
\end{claim}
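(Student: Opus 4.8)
The plan is to fix one part $X=\{x_1,\dots,x_p\}$ of the partition $\mathbb{P}$, labelled in clockwise order along $C_\theta$, and to exhibit $|X|$ pairwise disjoint $X$--$V(C_N)$ paths in $G$; this is precisely the assertion $r_M(X)=|X|$. Observe that $X$ lies on a single free-boundary arc $Y$ of $C_\theta$ that is disjoint from every strip, so all of the routing takes place in the radial region trapped between the nested cycles $C_N\subseteq C_{N+1}\subseteq\cdots\subseteq C_\theta$ together with the strips hanging off $C_\theta$; recall that $\theta-N=2k+nk3^n$.

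First I would harvest the descent that the linkage already provides. The paths $\mathcal{L}(x_1),\dots,\mathcal{L}(x_p)$ are pairwise disjoint, and by Claim~\ref{omega} the path $\mathcal{L}(x_j)$ meets at least $\min\{j,p-j+1\}$ protective cycles before it first leaves $\D$; since the protective cycles are nested, these must be the consecutive cycles $C_\theta,C_{\theta-1},\dots$, so the initial segment of $\mathcal{L}(x_j)$ inside $\D$ is, after straightening via Claim~\ref{decreasing}, a decreasing path $Q_j$ from $x_j$ down to a cycle of level at most $\theta-\min\{j,p-j+1\}+1$. Wherever $\mathcal{L}(x_j)$ later leaves $\D$ it enters the end of some strip $S$, and by Claim~\ref{Smatching} it crosses $S$ along a single matching edge and re-enters $\D$ at the opposite end of $S$.

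The heart of the matter is to prolong each $Q_j$ the remaining distance down to $C_N$, and this is exactly what the Case~1 hypothesis is for. Each strip $S$ carries a common independent set of size $m=(4n+1)k3^n+8k$ for $M_0(S)$ and $M_1(S)$; since the two ends of $S$ are identified by the matching $E(S)$, this furnishes, on \emph{either} end of $S$, a family of $m$ pairwise disjoint paths running from $\mathsf{ends}(S)$ to $C_N$, which I would use as a reservoir. Whenever one of the paths under construction reaches a strip end I would splice on a fresh reservoir path, always keeping it disjoint from the finitely many paths already built and from $Q_1,\dots,Q_p$; since we are only routing the $|V(\Pi)|\le 2k$ pattern endpoints in total and the number of strip crossings that arises along the way remains bounded, the slack designed into $m$ (its factor $4n+1$ and its additive $8k$) ensures that no reservoir is ever exhausted. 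Concatenating yields $|X|$ pairwise disjoint $X$--$V(C_N)$ paths, so $X$ is $M$-independent, as required.

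I expect the main obstacle to be precisely this disjointness accounting. A single path of $\mathcal{L}$ may shuttle in and out of $\D$ through several different strips, the reservoir paths belonging to different strips may meet one another inside $\D$ (or themselves pass through strips), and the whole construction must also avoid the already-used descending segments $Q_1,\dots,Q_p$. Verifying that $m=(4n+1)k3^n+8k$ has been chosen large enough to absorb all of this at once — and not merely the cruder bound $r_M(B_i)\ge 2|V(\Pi)|$ that Lemma~\ref{buffer} will need afterwards — is the technical crux.
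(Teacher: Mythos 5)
You have the right ingredients (Claim~\ref{omega} for the initial descent of the paths $\mathcal{L}(x_i)$, and the Case~1 hypothesis for a reservoir of disjoint paths down to $C_N$), but the proposal is missing the one idea that actually makes the two families meet disjointly, and the step you yourself flag as ``the technical crux'' is precisely the step that has to be proved. In the paper's argument there is no splicing at strip ends at all: one takes a \emph{single} strip $S$ and an $M_0(S)$-independent set $I$ of size $p$ (note $p\le 2k\le m$, so only $p$ reservoir paths are ever needed --- the extra slack built into $m$ is reserved for Claim~\ref{Aind} and Lemma~\ref{buffer}, not for this claim), and by Claim~\ref{decreasing} a family $Q_1,\dots,Q_p$ of disjoint \emph{decreasing} $I$--$C_N$ paths, all inside $\D$. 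The path assigned to $x_i$ then has three monotone pieces: follow $\mathcal{L}(x_i)$ only until its first meeting with $C_{\theta-(i-1)}$ (respectively $C_{\theta-p+i}$ for indices in the upper half of $X$), then travel \emph{laterally along that protective cycle} until hitting an appropriately indexed $Q_j$, then descend along $Q_j$ to $C_N$. Disjointness is then automatic: the initial pieces are disjoint because $\mathcal{L}$ is a linkage, the middle pieces are arcs of distinct protective cycles within each half of $X$ (traversed counter-clockwise for the first half and clockwise for the second), the final pieces are the disjoint $Q_j$, and cross-interactions are excluded by monotonicity of all three pieces. Your proposal never introduces this lateral travel along the $C_j$'s, and without it there is no way to join a truncated segment of $\mathcal{L}(x_i)$, which terminates at an interior level, to a reservoir path, whose top end sits back on $C_\theta$ at a strip end.

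Two further problems with the route you sketch. First, the independence to be established is in the matroid $M$ taken in $G\cap\D$ (this is how $M$ is used in Case~2, where the Menger separators $T,U$ live in $V(G\cap\D)$), so a routing that shuttles through the strips would not even prove the right statement; the paper's construction stays entirely inside $\D$. Second, ``splice on a fresh reservoir path, always keeping it disjoint from the finitely many paths already built'' is exactly the assertion in question rather than an argument for it: reservoir paths attached to different strip ends can cross one another and the descending segments already built inside $\D$, and no enlargement of $m$ by itself prevents this. The disjointness has to come from structure --- the nestedness of the cycles $C_j$ and the monotonicity of the three pieces --- which is what the paper's construction supplies and your sketch does not.
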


\begin{proof}[Subproof]
Label the vertices of an arbitrary part $X$ of $\mathbb{P}$ as $x_1, \dots, x_p$ (clockwise).  Choose an arbitrary strip $S$, and let $I$ be an $M_0(S)$-independent subset of size $p$.  By Claim~\ref{decreasing}, there is a family $\mathcal{Q}$ of $p$ disjoint decreasing $I$-$C_N$ paths.  Label these paths as $Q_1, \dots, Q_p$ (counter-clockwise).  We will use $\mathcal{Q}$ to construct $p$ disjoint $X$-$C_N$ paths in $G \cap \D$.  By Claim~\ref{omega}, for each $i \in [p]$, $w(x_i) \geq \min\{i, p-i+1\}$. 

So for each $i \in \{1, \dots, \lceil p / 2 \rceil \}$ we can define a path $\mathcal{P}(x_i)$ as follows:
\begin{itemize}
\item
Follow $\mathcal{L}(x_i)$ until it intersects $C_{\theta -(i-1)}$.

\item
Follow $C_{\theta-(i-1)}$ (counter-clockwise) until intersecting $Q_{\lceil p / 2 \rceil - (i-1)}$.

\item
Follow $Q_{\lceil p / 2 \rceil - (i-1)}$ until reaching $C_N$.
\end{itemize}
For $i \in \{p, p-1, \dots, \lceil p / 2 \rceil  +1\}$  we define $\mathcal{P}(x_i)$ as follows:
\begin{itemize}
\item
Follow $\mathcal{L}(x_i)$ until it intersects $C_{\theta-p+i}$.

\item
Follow $C_{\theta-p+i}$ (clockwise) until intersecting $Q_{\lceil p / 2 \rceil +p-i+1}$.

\item
Follow $Q_{\lceil p / 2 \rceil +p-i+1}$ until reaching $C_N$.
\end{itemize}
Since all three portions of these paths are decreasing, it follows that
\[
\mathcal{P}:=\{ \mathcal{P}(x_i) : i \in [p] \}
\]
is a family of disjoint $X$-$C_N$ paths.
\end{proof}

Next we show that $V(\Pi)$ is actually $M$-independent.  In fact, we prove the following much stronger claim.

\begin{claim} \label{Aind}
For each strip $S_i$ of $\Omega$ there exists a subset $K_i$ of $V_0(S_i)$ of size $k3^n$ such that $V(\Pi) \cup \bigcup_{i \in [n]} (K_i \cup \clone(K_i))$ is independent in $M$.
\end{claim}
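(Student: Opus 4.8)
The plan is to deduce the claim from Lemma~\ref{buffer}, applied to $M$, manufacturing both the sets $K_i$ and the buffers required by that lemma out of the common independent sets provided by the Case~1 hypothesis.

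First, for each strip $S_i$ I would fix a common independent set $J_i$ of $M_0(S_i)$ and $M_1(S_i)$ of size $m=(4n+1)k3^n+8k$; this exists by the Case~1 hypothesis. Since $M_0(S_i)$ and $M_1(S_i)$ are by definition the restrictions of $M$ to $V_0(S_i)$ and $V_1(S_i)$, every subset of $J_i$ and every subset of $\clone(J_i)$ is independent in $M$. I would then choose $K_i$ to be an interval of $k3^n$ vertices of $C_\theta$ lying inside the end of $S_i$ and made up of elements of $J_i$ (after the routine reduction that lets us assume the ends of the strips meet $G$ only in the $V_0(S_i)$ and $V_1(S_i)$, absorbing any stray vertices of $C_\theta$ into neighbouring buffers), positioned so that at least $2nk3^n+4k$ elements of $J_i$ remain on each side of $K_i$ along that arc. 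Then each $K_i$ and each $\clone(K_i)$ is an $M$-independent subset of size $k3^n$, as demanded.

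Next I would set up a cyclically contiguous partition $A_1,B_1,\dots,A_q,B_q$ of $V(C_\theta)$: reading around $C_\theta$ one alternately meets the $2n$ strip ends and the land segments between them, and I take the $A$-blocks to be the nonempty parts $X_j=Y_j\cap V(\Pi)$ of $\mathbb{P}$ together with all the blocks $K_i$ and $\clone(K_i)$, leaving the remaining intervals as the buffers $B_i$. Each $X_j$ is $M$-independent by Claim~\ref{indroots}, each $K_i$ and $\clone(K_i)$ is $M$-independent by the previous paragraph, and $\sum_j|A_j|=|V(\Pi)|+2nk3^n\le 2k+2nk3^n$. Every buffer runs from just past one $A$-block to just before the next, so it contains a ``side'' of some $K_i$ or $\clone(K_i)$ lying inside $J_i$ or $\clone(J_i)$, which is an $M$-independent set; the value $m=(4n+1)k3^n+8k$ is chosen exactly so that these sides are long enough --- when necessary first contracting the already $M$-independent set $V(\Pi)$, which is the instance of Lemma~\ref{buffer} with $A$-blocks the $X_j$ and buffers the strip ends, of rank $\ge m\ge 2|V(\Pi)|$, and working in $M/V(\Pi)$ --- to guarantee $r_M(B_i)\ge 2\sum_j|A_j|$ for every buffer. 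Lemma~\ref{buffer} then yields that $\bigcup_jA_j$ is independent, which is (or, after undoing the contraction, is equivalent to) the assertion that $V(\Pi)\cup\bigcup_{i\in[n]}(K_i\cup\clone(K_i))$ is $M$-independent.

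The step I expect to be the main obstacle is exactly this last one: carving $C_\theta$ into $A$-blocks and buffers of the correct types and checking that every buffer provably carries a long enough stretch of some $J_i$ or $\clone(J_i)$, while keeping track of how the (at most $2k$) land parts of $V(\Pi)$ and the possibly degenerate land segments interleave with the strip ends. This bookkeeping is what pins down the value $m=(4n+1)k3^n+8k$, since each strip end must simultaneously reserve $k3^n$ vertices for $K_i$ and leave enough of $J_i$ on both sides to feed the two buffers it borders.
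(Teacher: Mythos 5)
Your proposal is correct and follows essentially the same route as the paper: fix the common independent set $J_i$ of size $m$ in each strip end, take $K_i$ to be its middle $k3^n$ elements, use the two flanking pieces of size $2(nk3^n+2k)$ as buffers, include the parts $X_j$ of $\mathbb{P}$ (independent by Claim~\ref{indroots}) as further $A$-blocks, and invoke Lemma~\ref{buffer} once. The detour through $M/V(\Pi)$ is unnecessary --- the paper simply puts the $X_j$ into the collection $\mathcal{A}$ alongside the $J_i^2$ and $\clone(J_i^2)$ in a single application of the lemma.
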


\begin{proof}[Subproof]
Of course we are in the case when $M_0(S_i)$ and $M_1(S_i)$ have a large common independent set for each strip $S_i$ of $\Omega$.
So, for each $i \in [n]$ let $J_i$ be an independent set of size $(4n+1)k3^n+8k$ in $M_0(S_i)$, such that $\clone(J_i)$ is also independent in $M_1(S_i)$.  We partition $J_i$ into three sets $J_i^1, J_i^2$ and $J_i^3$ where $J_i^1$ are the first $2(nk3^n+2k)$  points, $J_i^2$ are the middle $k3^n$ points and $J_i^3$ are the last $2(nk3^n+2k)$ points.  We will apply Lemma~\ref{buffer} to the two collections of sets
\[
\mathcal{A}:=\{J_i^2 : i \in [n] \} \cup \{\clone(J_i^2): i \in [n] \} \cup \{X_i : i \in [l] \},
\]
and
\[
\mathcal{B}:=\{J_i^k : i \in [n], k \in \{1,3\} \} \cup \{\clone(J_i^k) : i \in [n], k \in \{1,3\} \}.
\]
Observe that each set in $\mathcal{A}$ is indeed $M$-independent, and that for any $B \in \mathcal{B}$ we have
\[
r_M(B) =2(nk3^n+2k) = 2\sum_{A \in \mathcal{A}} |A|.
\]
Therefore, by Lemma~\ref{buffer}, we conclude that $\bigcup_{A \in \mathcal{A}}A$ is $M$-independent.  Setting $K_i=J_i^2$ for each $i \in [n]$ gives the result.
\end{proof}

We can now attempt to realize the topological linkage $\mathcal{L}'$ in $G$.  We may assume that $\mathcal{L}'$ intersects $\bd(\D)$ only at vertices in $\mathcal{A}$.  Let $G':=G - \mathsf{int}(\D_N)$.  By removing all the strips from $\Omega$ and keeping track of how the paths in $\mathcal{L}'$ pass through the strips, we are left with a $\Pi'$-linkage problem in the disk $\D$, where $V(\Pi') \sub V(\mathcal{A})$.  

By Claim~\ref{Aind}, we have that $V(\mathcal{A})$ is $M$-independent.  Therefore, by Claim~\ref{decreasing}, there exists a family of $|V(\mathcal{A})|$ disjoint decreasing $V(\mathcal{A})$-$C_N$ paths in $G'$.  These decreasing paths, together with the protective circuits $C_{\theta}, C_{\theta-1}, \dots, C_N$ form a large cylindrical-grid minor $H'$ in $G' \cap \D$.  Since 
\[
\theta -N \geq 2k+nk3^n =|V(\mathcal{A})| \geq |\Pi'|,
\]
Lemma~\ref{cgrid} implies that $G' \cap \D$ actually has a $\Pi'$-linkage.  It follows that $G'$ has a $\Pi$-linkage, and that $v$ is redundant for $\Pi$ in $G$ since $v \notin V(G')$, completing the proof in Case 1.

The remaining case is if $M_0(S)$ and $M_1(S)$ do not have a large common independent set, for some strip $S$ of $\Omega$.  By re-indexing, we may assume that $S=S_1$.

\begin{case}
$M_0(S_1)$ and $M_1(S_1)$ do not have a common independent set of size $m$.
\end{case}

The idea in this case is to reduce the number of strips.  Since $M_0(S_1)$ and $M_1(S_1)$ do not have a common independent set of size $m$, by the Matroid Intersection Theorem~\cite{edmonds}, there is a partition $\{A,B\}$ of $V_0(S_1)$ such that
\[
r_{M_0(S_1)}(A) + r_{M_1(S_1)} (\clone(B)) < m. 
\]
That is, there exist subsets $T$ and $U$ of $V(G \cap \D)$ such that 
\begin{itemize}
\item
$T$ separates $A$ from $V(C_N)$ in $G \cap \D$,

\item
$U$ separates $\clone(B)$ from $V(C_N)$ in $G \cap \D$, and

\item
$|T|+|U| < m$.
\end{itemize}

We choose such a $T$ and $U$ with $|T \cup U|$ minimum.  We then choose an index $\gamma \in \{\theta-1, \dots, \theta - m\}$ such that $T \cup U$ is disjoint from $C_\gamma$.  Recall that the level of a vertex $x \in G \cap \D$ is the unique index $j$ such that $x \in V(C_j)$.

A path is a $\D_{\gamma}$-path if both its ends belong on $\D_{\gamma}$, and it is otherwise disjoint from $\D_{\gamma}$.  Evidently, a $\D_{\gamma}$-path must have both of its ends on $C_{\gamma}$.  For each path $P$ of $\mathcal{L}$, we define $\mathcal{U}(P)$ to be the family of maximal $\D_{\gamma}$-subpaths of $P$.  We then define $\mathcal{U}(\mathcal{L}):=\bigcup_{P \in \mathcal{L}} \mathcal{U}(P)$.   
\begin{claim} \label{homotopy}
There are at most $(2n+1)^{4nm}$ homotopy classes of paths in  $\mathcal{U}(\mathcal{L})$. 
\end{claim}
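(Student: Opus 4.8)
The plan is to show that every member of $\mathcal{U}(\mathcal{L})$ is a $\bd(\D_\gamma)$-path in the surface $\Omega' := \Omega - \mathsf{int}(\D_\gamma)$ -- the annular region of $\D$ between $C_\gamma$ and $C_\theta = \bd(\D)$, with the $n$ strips of $\Omega$ re-attached along $C_\theta$ -- and that the homotopy class of such a path is encoded by a word of length at most $4nm$ over an alphabet of $2n+1$ symbols, namely the $2n$ directed strip-traversals together with one extra ``advance'' symbol. A direct count of such words then gives the bound $(2n+1)^{4nm}$.

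First I would pin down the combinatorial shape of a path $Q \in \mathcal{U}(\mathcal{L})$. By the first two claims of this section $E(G) = E(H) \cup E(\mathcal{L})$ with $E(H) \cap E(\mathcal{L}) = \emptyset$ and $V(G) = V(H)$, so every vertex has a level in $[\theta]$; since each $C_j$ is induced in $G \cap \D$, every edge of $\mathcal{L}$ lying in $\D$ joins two consecutive levels, while by Claim~\ref{Smatching} every edge of $\mathcal{L}$ lying in a strip joins two level-$\theta$ vertices. As $Q$ is a subpath of a member of $\mathcal{L}$ that avoids $\mathsf{int}(\D_\gamma)$, the level along $Q$ stays in $\{\gamma,\dots,\theta\}$ and equals $\gamma$ only at the two ends of $Q$. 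Decomposing $Q$ into an alternating concatenation of maximal subpaths inside $\D$ (\emph{runs}) and single strip-edges, Claim~\ref{nohill} forces each run to be unimodal in level, since a subpath of $\mathcal{L}$ in $\D$ whose interior stays strictly above its common end-level would be a hill. Hence the first and last runs of $Q$ are strictly monotone between $C_\gamma$ and $C_\theta$, each intermediate run is a single descent-then-ascent between two level-$\theta$ points, and in particular each run has at most $2(\theta-\gamma) \le 2m$ edges.

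It then remains to bound the number of strip-edges of $Q$ (and, if the ambient homotopy detects it, the winding of the runs of $Q$ around $C_\gamma$), since these determine the word-length of the homotopy class. The key point is that a ``repeated'' traversal of a single strip $S_i$, or a spiral of a run around $C_\gamma$, that is inessential can be rerouted away, contradicting minimality of $|E(G)|$: concretely, if $Q$ uses two edges of $S_i$ with only an in-$\D$ run between them, that run together with an arc of $C_\theta$ bounds a disc whose interior must meet $V(\Pi)$ (else one reroutes), and an interleaving argument using $|V(\Pi)| = 2k$, the $n$ strips, and the $\theta - \gamma \le m$ available levels caps the total number of strip-edges of $Q$ by roughly $4nm$ -- this is precisely what the choice $m = (4n+1)k3^n + 8k$ is tuned for. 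With this in hand the homotopy class of each $Q$ is a word of length at most $4nm$ in $2n+1$ symbols, and there are at most $(2n+1)^{4nm}$ such words.

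The hard part will be this last rerouting/interleaving step: it is the analogue here of Claim~\ref{nohill} and the no-nibbles claim, and the bookkeeping of which excursions are forced to be ``essential'' -- bounding, together with cycle-arcs and strip-equators, a disc that avoids the insulated vertex $v$ -- is where the constant $m$ must be chosen carefully. Everything before it is routine once the level structure of $\mathcal{L}$ inside $\Omega$ has been set up.
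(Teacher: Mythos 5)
Your setup is sound and matches the paper's: since a path $Q \in \mathcal{U}(\mathcal{L})$ contains no hills it must leave $\D$, its homotopy class is encoded by its reduced word of directed strip-traversals, and the run structure you describe (strictly monotone first and last runs, V-shaped intermediate runs confined to levels $\gamma,\dots,\theta$) is correct. But the entire content of the claim is the bound of $4nm$ on the length of this word, and that is exactly the step you leave as ``the hard part,'' with a sketched mechanism that does not work. You propose to charge repeated traversals of a strip to $V(\Pi)$ via rerouting: if two consecutive traversals of $S_i$ are joined by a run whose disc avoids $V(\Pi)$, reroute along $C_\theta$. This fails on two counts. First, the run between two traversals is a valley, not a hill, so rerouting it along the corresponding arc of $C_\theta$ is not licensed by minimality of $|E(G)|$: that arc may carry other paths of $\mathcal{L}$ (the no-hills argument works only because the lowest, shortest hill has a provably free arc $K_J$), and nothing forces the bounded disc to meet $V(\Pi)$. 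Second, even granting it, $|V(\Pi)|=2k$ would cap the number of essential traversals by a function of $k$, not of $m$; the formula $m=(4n+1)k3^n+8k$ is tuned for the matroid-intersection case split elsewhere in the proof and plays no role in this claim.

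The bound actually follows from a planar trapping argument that uses only $\theta-\gamma\le m$: if some directed letter $S$ occurred $2m+1$ times, order the corresponding traversal points $x_1,\dots,x_{2m+1}$ along the end of $S$ on $C_\theta$; the portion of $Q$ from the middle point until it next enters a strip, together with an arc of $C_\theta$, bounds a region not containing $v$, and that small region traps at least $m+1$ of the $x_j$ --- impossible, because that portion of $Q$ meets at most $\theta-\gamma\le m$ insulating cycles. This gives at most $2m$ occurrences of each of the $2n$ letters, hence words of length at most $4nm$ over a $2n$-letter alphabet, and the number of such words is at most $(2n+1)^{4nm}$. Note also that your extra ``advance'' symbol is unnecessary (homotopy is taken with endpoints free on $\bd(\D_\gamma)$, and $\D_\gamma$ is a disc, so winding of the runs around the annulus is null-homotopic) and keeping it breaks the arithmetic: words of length at most $4nm$ over $2n+1$ symbols number strictly more than $(2n+1)^{4nm}$.
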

\begin{proof}[Subproof]
Let $Q \in \mathcal{U}(\mathcal{L})$.  Since $Q$ does not contain any hills, there is no subpath $K$ of $C_{\gamma}$ such that $Q \cup K$ bounds a disk in $\Omega$.  In particular, this implies that $Q$ must use an edge outside of $\D$ and that the homotopy class of $Q$ is determined by how $Q$ passes through the strips of $\Omega$.    Let $\mathcal{A}$ be the alphabet $\{S_1, \dots, S_n, S_1^{-1}, \dots, S_n^{-1} \}$.  If we orient each strip of $\Omega$, then the homotopy class of $Q$, denoted $\mathcal{H}(Q)$, is then naturally encoded by a string of letters from $\mathcal{A}$.  We make the convention that if $S_iS_i^{-1}$ or $S_i^{-1}S_i$ appears in $\mathcal{H}(Q)$ for some $i \in [n]$, then we cancel it.  With this convention, we prove that each letter of $\mathcal{A}$ appears at most $2m$ times in $\mathcal{H}(Q)$, from which the claim follows.

Towards a contradiction assume that some letter $\alpha$ appears at least $2m+1$ times in $\mathcal{H}(Q)$.  By reversing the direction of $Q$ if necessary, we may assume $\alpha=S$, for some strip $S$.  Let $e_1, \dots, e_{2m+1}$ be edges of $Q$ corresponding to the occurrences of $S$ in $\mathcal{H}(Q)$.  Let $e_i=w_i x_i$ so that $Q$ traverses $e_i$ from $w_i$ to $x_i$ and so that this traversal is consistent with the orientation of $S$.  By cancellation, the next edge of $Q$ after $e_i$ that is outside $\D$ cannot pass through $S$ in the backward direction.  We re-index so that $x_1, \dots, x_{2m+1}$ occur clockwise along one end of the strip $S$ (this is not necessarily their order in $Q$).

Either $x_{m+1}$ occurs before $x_{m+2}$ along $Q$ or vice versa.  By symmetry, we assume the former.  Let $Q':=x_{m+1}Q$ and let $y$ be the first vertex of $Q'$ such that the next edge of $Q'$ after $y$ passes through a strip.  By cancellation, it follows that $y \in [x_{2m+1},x_1]$. 

Recall that a region $\mathcal{R}$ in $\D$ is \e{small} if it does not contain the insulated vertex $v$.  Clearly, either $Q'y \cup [y,x_{m+1}]$ bounds a small region, or $Q'y \cup [x_{m+1},y]$ bounds a small region $\mathcal{R}$.  So, we either have $\{x_1, \dots, x_{m+1}\} \sub \mathcal{R}$ or $\{x_{m+1}, \dots, x_{2m+1}\} \sub \mathcal{R}$.  In either case we get a contradiction, since $Q'y$ intersects at most $\theta - \gamma \leq m$ insulating cycles. 
\end{proof}

We call a homotopy class of $\mathcal{U}(\mathcal{L})$ \e{thin} if it has size at most $4m$, otherwise it is \e{thick}.

\begin{claim} \label{dichotomy}
Either there are at most $n-1$ thick homotopy classes of $\mathcal{U}(\mathcal{L})$ (up to inversion),  or
$T \cup U$ separates $V(C_{\theta})$ from $V(C_N)$.
\end{claim}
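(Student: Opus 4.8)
The plan is to show that the two alternatives are exhaustive by assuming both fail: suppose $\mathcal U(\mathcal L)$ has at least $n$ thick homotopy classes (up to inversion) and that $T\cup U$ does not separate $V(C_\theta)$ from $V(C_N)$ in $G\cap\D$, and deduce a contradiction. First I would record the rigidity of thick classes. Since every path of $\mathcal U(\mathcal L)$ is a maximal $\D_\gamma$-subpath of one of the pairwise disjoint paths of $\mathcal L$, the whole family $\mathcal U(\mathcal L)$ is pairwise disjoint, so distinct thick classes are witnessed by pairwise disjoint ``bands'' of parallel homotopic $\D_\gamma$-paths. Each thick class has more than $4m$ members and, exactly as in the proof of Claim~\ref{homotopy}, a non-trivial reduced word in the strips (otherwise some $Q\cup K$ with $K\subseteq C_\gamma$ would bound a disc, contradicting that no $\D_\gamma$-subpath of $\mathcal L$ is a hill). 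Finally, since $|T\cup U|<m$ and the members of a band are pairwise disjoint, each thick class contains a sub-band of more than $3m$ paths that avoid $T\cup U$ entirely.

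The heart of the argument isolates the distinguished strip $S_1$. First suppose some thick class has a reduced word beginning or ending with $S_1^{\pm1}$. Reading its band-paths from that end yields more than $3m$ pairwise disjoint arcs in $G\cap\D$, each running from $C_\gamma$ to a vertex of one cyclically contiguous block of $V_0(S_1)$, all avoiding $T\cup U$. Since $\{A,B\}$ partitions $V_0(S_1)$, more than $m$ of these arcs end in $A$ (or, in the symmetric situation, more than $m$ of their matched $V_1(S_1)$-ends lie in $\clone(B)$). Now take a path $R$ in $(G\cap\D)-(T\cup U)$ from $V(C_\theta)$ to $V(C_N)$ witnessing non-separation, chosen to meet $C_\gamma$ as few times as possible; its last crossing of $C_\gamma$ splits off a subpath $R'$ lying inside $\D_\gamma$, running from a point of $C_\gamma$ to $V(C_N)$ and still avoiding $T\cup U$. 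Concatenating one of the arcs above with a suitable sub-arc of the cycle $C_\gamma$ (disjoint from $T\cup U$ by the choice of $\gamma$) and then with $R'$ produces a walk in $G\cap\D$ from a vertex of $A$ to $V(C_N)$ avoiding $T$, contradicting that $T$ separates $A$ from $V(C_N)$ in $G\cap\D$; the symmetric configuration contradicts the corresponding property of $U$.

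It remains to treat the case in which no thick class has a reduced word touching $S_1$ at either end. Here I would first try to upgrade the hypothesis to ``no thick class uses $S_1$ at all'': a band-path crossing $S_1$ has no hill, so its first $S_1$-crossing detaches a segment from $C_\gamma$ that reaches $V_0(S_1)$ without recrossing $S_1$, and one re-runs the previous paragraph with this segment, using the minimality of $|T\cup U|$ and of $\gamma$ to keep the resulting walk inside $\D$ and off $T\cup U$. Granting this, all of the $\ge n$ thick bands live, pairwise disjointly and pairwise non-homotopically, in the disk with $n-1$ strips obtained from $\Omega$ by deleting $S_1$; but a disk with $n-1$ strips should not support $n$ pairwise disjoint, pairwise non-homotopic thick $\D_\gamma$-bands with non-trivial reduced words --- each such band essentially ``uses up'' a strip --- and this gives the desired contradiction. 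I expect this final counting step, namely making precise that $n-1$ is the right ceiling on the number of simultaneously-realizable disjoint thick bands in a disk with $n-1$ strips, together with the reduction of the ``$S_1$ in the interior of a word'' case to the two end cases, to be the main obstacle; the remaining ingredients are planar rerouting along the protective cycles $C_\gamma,C_{\gamma+1},\dots,C_\theta$ and are routine.
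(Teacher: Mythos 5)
There is a genuine gap, and it is concentrated in exactly the step you flag as ``the main obstacle'': your argument never establishes the paper's key structural fact that \emph{every thick homotopy class is a single-letter word} $S_i^{\pm 1}$. The paper proves this first (if the reduced word of a thick class had length $\geq 2$, the $>4m$ disjoint homotopic subpaths between the first two strip-crossings would, by planarity, trap some $2m$ of them inside a small region bounded by one such subpath and an arc of $C_\theta$, forcing a path to cross more than $\theta-\gamma\leq m$ insulating cycles). Without this lemma your plan breaks in three places. First, your final counting step is based on a false premise: a disk with $n-1$ strips \emph{can} support far more than $n-1$ pairwise disjoint, pairwise non-homotopic essential $\D_\gamma$-bands with non-trivial reduced words (already a planar disk with two strips admits three such arcs), so ``each band uses up a strip'' is not true for words of length $\geq 2$. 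It becomes true only after the single-letter lemma, at which point the count is immediate and your case (iii) and the reduction of case (ii) evaporate. Second, even your case (i) is incomplete on the $\clone(B)$ side: your arcs run in $G\cap\D$ from $C_\gamma$ to $V_0(S_1)$, and if more than $m$ of them land in $B$ you need paths \emph{in $G\cap\D$} from $\clone(B)\subseteq V_1(S_1)$ to $C_\gamma$ to contradict the property of $U$ (the strip edge itself is not in $G\cap\D$, so routing $\clone(B)\to B\to C_\gamma$ through the matching proves nothing about $U$). Such return segments on the $V_1(S_1)$ side are guaranteed precisely when the word is exactly $S_1$, i.e.\ again by the single-letter lemma.

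By contrast, the paper's proof is: (a) thick classes are single letters, hence at most $n$ up to inversion; (b) if the class $S_1$ is thin, the first alternative holds; (c) if the class $S_1$ is thick, each of its $\geq 2m$ disjoint members supplies disjoint paths in $G\cap\D$ from \emph{both} $V_0(S_1)$ and $V_1(S_1)$ to $C_\gamma$, so whichever of $|X\cap A|\geq m$ or $|X\cap B|\geq m$ holds, the relevant separator must already separate $V(C_\gamma)$ from $V(C_N)$ inside $\D_\gamma$; minimality of $|T\cup U|$ then forces the other separator to be empty, giving the second alternative. Note also that the paper does not argue by contradiction from non-separation: it directly shows that a thick $S_1$-class forces separation. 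Your use of the witness path $R$ and rerouting along $C_\gamma$ is a legitimate reformulation of step (c) for the $A$-side, but the proof cannot be completed along your lines without first proving the single-letter lemma.
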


\begin{proof}[Subproof]
Let $\mathcal{H}$ be a thick homotopy class, represented as a string of letters from $\{S_1, \dots, S_n, S_1^{-1}, \dots, S_n^{-1} \}$.  Note that $\mathcal{H}$ is not the empty string since $\mathcal{L}$ has no hills.  Suppose $\mathcal{H}$ is of length at least 2.  Consider an arbitrary path $Q \in \mathcal{H}$ and let $e_1$ and $e_2$ be the edges of $Q$ that correspond to the first two letters of the homotopy class of $Q$.  For $i \in [2]$, let $e_i=x_i y_i$, so that $Q$ traverses $e_i$ from $x_i$ to $y_i$.  Finally, let $Q'$ be the subpath of $Q$ from $y_1$ to $x_2$.  If $\mathcal{H}$ is not thin, then the collection $\mathcal{H}':=\{Q' : Q \in \mathcal{H}\}$ has size at least $4m+1$.  Therefore, there exists $J \in \mathcal{H}'$ and some subpath $K$ of $C_{\theta}$ such that $J \cup K$ bounds a small region that contains at least $2m$ members of $\mathcal{H}$.  This is a contradiction, as each path in $\mathcal{H}'$ intersects at most $\theta - \gamma \leq m$ insulating cycles.   

Thus, if $\mathcal{H}$ is thick, it must be a string of length 1.  Up to inversion, this implies that $\mathcal{H}=S$, for some strip $S$, leaving at most $n$ possibilities for $\mathcal{H}$.  
However, consider the homotopy class $\mathcal{H}_1$ represented by the string $S_1$.  If $\mathcal{H}_1$ is not thick we are done, so assume that $\mathcal{H}_1$ contains more than $4m$ paths.  Therefore, $\mathcal{H}_1$ contains a collection of at least $2m$ vertex-disjoint paths.  Observe that each of these paths must pass through $V_0(S_1)$ and $V_1(S_1)$.  Therefore, there is a subset $X$ of $V_0(S_1)$ of size $2m$ such that 
\[
\kappa_{G \cap \D} (X, V(C_{\gamma}))=2m=\kappa_{G \cap \D} (\clone(X), V(C_{\gamma})).
\]
Note that, for the partition $\{A,B\}$ of $V_0(S_1)$, we have that $|X \cap A| \geq m$ or $|X \cap B| \geq m$.  By symmetry, we assume the former.  Since $|T| <m$, we conclude that $A$ is still connected to $V(C_{\gamma})$ in $(G \cap \D) - T$.  Since $V(C_{\gamma})$ contains no vertices of $T$, and $T$ separates $A$ from $V(C_N)$ in $G \cap \D$, it follows that $T \cap \D_{\gamma}$ must separate $V(C_{\gamma})$ from $V(C_{N})$ in $G \cap \D_{\gamma}$.  By the minimality of $| T \cup U|$ it follows that $U=\emptyset$ and that $T \cap \D_{\gamma}=T$.  This completes the proof of the claim.   
\end{proof}

We handle the first possibility of Claim~\ref{dichotomy} first.  

\begin{scase}
 There are at most $n-1$ thick homotopy classes of $\mathcal{U}(\mathcal{L})$ (up to inversion).
 \end{scase}

Let $G':=(G \cap \D_{\gamma}) \cup \mathcal{U}(\mathcal{L})$.  By Claim~\ref{homotopy} we can regard $G'$ as embedded in a disk with at most $\beta:=(2n+1)^{4m}$  strips 

We describe how to reduce the $\Pi$-linkage problem in $G$ to a $\Pi'$-linkage problem in $G'$.  
Let $P \in \mathcal{L}$.  If $P$ has a vertex in $C_{\gamma}$, then let $x$ be the first such vertex and let $y$ be the last.  If they exist, place $\{x,y\}$ into $\Pi'$ and repeat for all paths in $\mathcal{L}$.  By splitting strips if necessary, we may assume that $G'$ is embedded in a disk with at most $\beta' \leq \beta + 2k$ strips
\[
\Omega':=\D_{\gamma} \cup S_1' \cup \dots S_{\beta'}',
\]
and with $V(\Pi') \sub \bd(\Omega')$.

At first glance it seems as if we have increased the complexity of our problem, since we have more strips than we began with.  However, at most $n-1$ of the strips $S_1', \dots, S_{\beta'}'$ are thick.  By re-indexing, we may assume that $S_n', \dots, S_{\beta'}'$ are all thin.  By deleting all the edges contained in $S_n' \cup \dots \cup S_{\beta'}'$, and keeping track of how the paths in $\mathcal{L}$ pass through $S_n' \cup \dots \cup S_{\beta'}'$, we reduce to a $\Pi''$-linkage in $\Omega'':=\D_{\gamma} \cup S_1' \cup \dots \cup S_{n-1}'$, where $|\Pi''| \leq k + 4m (2n+1)^{4nm}$.  Since $v$ is a $\gamma$-insulated vertex with respect to $\Pi''$, and $\gamma  \geq \theta(k + 4m(2n+1)^{4nm},  n-1)$, it follows that $v$ is redundant for $\Pi''$, and hence also for $\Pi$.  This completes the subcase.

We now handle the remaining subcase.

\begin{scase}
$T \cup U$ separates $V(C_{\theta})$ from $V(C_N)$ in $G \cap \D$.
\end{scase}

We will reduce the $\Pi$-linkage problem in $G$ to a $\Pi'$-linkage problem in $G \cap \D_N$.  We do this by  proving that $|V(\mathcal{L}) \cap V(C_{N})|$ is small.  So, let $x \in V(\mathcal{L}) \cap V(C_{N})$, and suppose $x  \in V(P)$ for $P \in \mathcal{L}$.  We define $\mathsf{next}(x)$ to be the next vertex of $P$ that is also in $T \cup U$ (we allow $\mathsf{next}(x)=x$).  The first thing to observe  is that $\mathsf{next}(x)$ does exist.  This follows since $T \cup U$ separates $V(C_{\theta})$ from $V(C_N)$.  Secondly, since $\mathcal{L}$ contains no hills, the map $x \mapsto \mathsf{next}(x)$ is injective.  So, 
\[
|V(\mathcal{L}) \cap V(C_{N})| \leq |T \cup U| < m.
\]
By keeping track of how the paths in $\mathcal{L}$ enter and leave $\D_N$, we reduce to a $\Pi'$-linkage problem in $G \cap \D_N$, where $|\Pi'|<m$.  Since $N \geq \theta(m,0)$, we have that $v$ is redundant for $\Pi'$ in $G \cap \D_N$, and hence redundant for $\Pi$ in $G$.  

This completes the subcase, and hence the entire proof.
\end{proof}

\subsection*{Acknowledgements}
We would like to thank Jiří Matoušek, Eric Sedgwick, Martin Tancer, and Uli Wagner for making an early version of \cite{untangling} available to us.

\bibliography{references}{}
\bibliographystyle{plain}

\end{document}